\documentclass{amsart}
\usepackage{tikz}
\usepackage{hyperref}   
\usepackage{amssymb,xcolor}
\usepackage{stmaryrd} 

\usepackage{mathtools}  
\mathtoolsset{showonlyrefs}

\theoremstyle{plain}
\newtheorem{definition}{Definition}
\theoremstyle{plain}
\newtheorem{corollary}{Corollary}
\theoremstyle{plain}
\newtheorem{theorem}{Theorem}[section]
\theoremstyle{plain}
\newtheorem{lemma}[theorem]{Lemma}
\theoremstyle{remark}
\newtheorem{remark}[theorem]{Remark}
\theoremstyle{plain}
\newtheorem{proposition}[theorem]{Proposition}
\numberwithin{equation}{section}

\begin{document}

\title{Inverse mean curvature flow with a free boundary in hyperbolic space}

\author{Xiaoxiang Chai}
\address{Korea Institute for Advanced Study, Seoul 02455, South Korea}
\email{xxchai@kias.re.kr}

\begin{abstract}
  We study the inverse mean curvature flow with a free boundary supported on
  geodesic spheres in hyperbolic space. Starting from any convex hypersurface
  inside a geodesic ball with a free boundary, the flow converges to a totally
  geodesic disk in finite time. Using the convergence result, we show a
  Willmore type inequality.
\end{abstract}

{\maketitle}

\section{Introduction}

Let $B_0 $ be a geodesic ball of radius $\rho_0 > 0$ in hyperbolic space
$\mathbb{H}^{n + 1}$ and $M_0$ be a convex hypersurface sitting inside $B_0$
with free boundary supported on $\partial B_0$. We study the inverse mean
curvature flow $M_t$ starting from $M_0$. The flow $M_t$ is a family of free
boundary immersions
\[ F : [0, T^{\ast}) \times \mathbb{D} \to B_0 \]
of $n$ dimensional disks $\mathbb{D}$ into $B_0$ with a choice of normal
vector field $\nu$ and $M_t = F (t, \cdot)$. The immersions $F$ satisfies the
following evolution:
\begin{equation}
  \left\{\begin{array}{lll}
    \tfrac{\partial}{\partial t} F & = \tfrac{1}{H} \nu & \text{ in }
    \mathbb{D}\\
    \langle \nu, \eta \rangle & = 0 & \text{ on } \partial \mathbb{D},
  \end{array}\right. \label{imcf}
\end{equation}
where $F (t, \partial \mathbb{D}) \subset \partial B_0$ and $\eta$ is the
outward unit normal to $\partial B_0$ in $B_0$.

The inverse mean curvature flow is an expanding flow first studied by
{\cite{gerhardt-inverse-2011,urbas-expansion-1990}}. Starting from a
star-shaped mean convex hypersurface in Euclidean space, after rescaling the
hypersurfaces converges to a standard sphere.

The work of Huisken and Ilmanen {\cite{huisken-inverse-2001}} is closely
related to our study. Besides a weak theory, they utilized a Geroch
monotonicity of the Hawking mass {\cite{hawking-gravitational-1968}} to give a
proof of the Penrose inequality under the assumption of nonnegative scalar
curvature. Similar monotonicity were observed in other expanding flows and
geometric quantity preserving flows leading to rich results in convex
geometry, isoperimetric problems and general relativity. Hawking mass has a
natural generalization to the free boundary case. A similar monotonicity of
Hawking mass with boundary was observed \ under the free boundary inverse mean
curvature flow by {\cite{marquardt-weak-2017}}. Lambert and Scheuer
{\cite{lambert-inverse-2016}} developed a convergence result which says that a
free boundary closed convex hypersurface in the unit ball converges to a
totally geodesic disk under the inverse mean curvature flow. Later, they
{\cite{lambert-geometric-2017}} showed a Willmore type inequality in higher
dimensions. See the works \
{\cite{scheuer-alexandrov-fenchel-2018,wang-guan-li-2020}} for some further
interesting developments.

Unlike in Euclidean space, the inverse mean curvature flow starting from a
closed hypersurface in hyperbolic space does not always converge to a round
sphere after rescaling {\cite{gerhardt-inverse-2011}}. In the free boundary
case, Fraser-Schoen {\cite{fraser-uniqueness-2015}} showed the that minimal
immersions of disks in hyperbolic geodesic ball of dimension 3 can only be
totally geodesic. We expect that the free boundary inverse mean curvature flow
in hyperbolic geodesic ball converges to a totally geodesic disk and it is
hopeful to achieve a similar theory as in {\cite{lambert-inverse-2016}}. This
is the goal of our work. We have showed the following.

\begin{theorem}
  \label{convergence theorem}Let $F_0$ the embedding of a smooth and strictly
  convex free boundary hypersurface in a geodesic ball of radius $\rho_0$ with
  unit normal vector field $\nu$. Then there exists a finite time $T^{\ast}$,
  $0 < \alpha < 1$ and a unique solution such that
  \[ F \in C^{1 + \tfrac{\alpha}{2}, 2 + \alpha} ([0, T^{\ast}) \times
     \mathbb{D}) \cap C^{\infty} ((0, T^{\ast}) \times \mathbb{D}) \]
  of free boundary inverse mean curvature flow \eqref{imcf} with initial
  hypersurface $M_0$. The flow $F (t, \cdot)$ converges to a unique totally
  geodesic disk as $t \to T^{\ast}$.
\end{theorem}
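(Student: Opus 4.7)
My plan is to reduce \eqref{imcf} to a scalar quasilinear parabolic equation with an oblique boundary condition and then run a standard continuation argument. Pick a totally geodesic $n$-disk $D$ through the centre of $B_0$ and introduce Fermi coordinates $(y,s)$ along $D$, so that $\partial B_0$ meets $D$ orthogonally along $\partial D$. Because $M_0$ is strictly convex with perpendicular contact at $\partial B_0$, after a rotation of $D$ if necessary we may write $M_0$ as a graph $s=u_0(y)$ over a domain $\Omega_0\subset D$, and the free boundary condition becomes a homogeneous Neumann condition at $\partial\Omega_0$. Writing $M_t=\operatorname{graph} u(t,\cdot)$ turns \eqref{imcf} into
\[ u_t = \frac{\sqrt{1+|\nabla u|_{g_s}^{2}}}{H[u]}, \]
a quasilinear parabolic equation with a conormal boundary condition, so short-time existence and uniqueness in the class $C^{1+\alpha/2,\,2+\alpha}$ follow from Lieberman's theory for oblique-derivative problems.

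\textbf{A priori estimates.} I would then extend the solution up to the maximal time via the usual hierarchy of geometric estimates, with boundary contributions handled by Hopf-type arguments exploiting that $\partial B_0$ is totally umbilical. First, a $C^0$ barrier using concentric geodesic spheres (which foliate $B_0$ by sub- and super-solutions of IMCF) keeps $M_t$ inside $B_0$ and graphical over a subdomain of $D$. Second, a gradient estimate via an auxiliary function of the form $\langle F,\partial_{s}\rangle$ together with a height correction. Third, a lower bound $H\geq c_{0}>0$ extracted from the evolution equation of $H$ combined with the Neumann condition at $\partial M_t$, ensuring the PDE stays uniformly parabolic. Fourth, an upper bound for $|A|^2$ via the tensor maximum principle applied to a Bernstein-type quantity such as $|A|^{2}/H^{2}$; the negative curvature of $\mathbb{H}^{n+1}$ helps in the interior, while at the boundary the orthogonality $\langle\nu,\eta\rangle=0$ together with the umbilicity of $\partial B_0$ gives the correct sign of the normal derivative so the maximum cannot be attained on $\partial M_t$. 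Parabolic Schauder bootstrap then promotes these estimates to $C^{\infty}$ regularity on $(0,T^{\ast})$.

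\textbf{Finite time and convergence.} The area identity $\tfrac{d}{dt}|M_t|=|M_t|$ gives $|M_t|=e^{t}|M_0|$, so the area bound $|M_t|\leq C(\rho_0)$ inherited from the graphical structure forces $T^{\ast}<\infty$. To identify the limit I would monitor the $L^{2}$-norm of the traceless second fundamental form $\mathring{A}=A-\tfrac{H}{n}g$ and show, via its evolution equation and integration by parts (handling the free boundary terms using $\langle\nu,\eta\rangle=0$), that
\[ \int_{M_t}|\mathring{A}|^{2}\,d\mu_{t}\longrightarrow 0 \quad \text{as } t\to T^{\ast}, \]
in the spirit of the Lambert--Scheuer argument in the Euclidean ball. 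Combined with the uniform $C^{0}$ control and the graphical structure, this yields Hausdorff convergence of $M_t$ to a totally umbilical disk in $B_0$ meeting $\partial B_0$ orthogonally, which in $\mathbb{H}^{n+1}$ is necessarily totally geodesic. I expect the main obstacle to be the preservation of enough convexity to keep $H$ uniformly bounded below up to $T^{\ast}$: this is the technical heart analogous to \cite{lambert-inverse-2016}, and controlling the boundary terms in the evolution of $|A|^{2}/H^{2}$ using only the umbilicity of $\partial B_0$ is the place where new work is needed. The uniqueness of the limiting disk is the second delicate point and requires showing that the "centre" of $\partial M_t\subset\partial B_0$ converges, which I would handle by an ODE argument on the space of totally geodesic disks intersecting $\partial B_0$ orthogonally.
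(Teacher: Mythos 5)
Your outline diverges from the paper's proof at two load-bearing points, and both are genuine gaps rather than just different routes.

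First, the graph representation. If you write $M_t$ as a Fermi graph $s=u(y)$ over a fixed totally geodesic disk $D$, the free boundary condition does \emph{not} become a homogeneous Neumann condition: the level sets $\{s=\mathrm{const}\}$ are equidistant hypersurfaces of $D$, and in hyperbolic space these do not meet $\partial B_0$ orthogonally for $s\neq 0$ (in the Poincar\'e disk picture they are circular arcs through the ideal endpoints of $D$, which are not orthogonal to the circle $|x|=r_0$). So the boundary condition is a genuinely nonlinear oblique (capillary-type) condition whose obliqueness you would have to control. Worse, a strictly convex free boundary hypersurface need not be a graph over any fixed totally geodesic disk with bounded gradient --- a small free boundary spherical cap near a point of $\partial B_0$ is not. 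This is exactly why the paper works in Moebius coordinates: the leaves $\{\lambda=\mathrm{const}\}$ are the free boundary spherical caps through the pole $p_1$, so the boundary condition is the exact conormal condition $\sum_i\xi^i\partial u/\partial\xi^i=0$, and graphicality is not assumed but \emph{proved} (Proposition \ref{graphical}), resting on the convexity geometry of Section \ref{geometry from convexity} (the cone containment, $\tilde z^0<0$, $M\subset B^+\setminus\{p_1\}$, and the height estimate $z^1\geqslant\delta>0$).

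Second, the identification of the limit. The claim that a totally umbilical free boundary disk in $\mathbb{H}^{n+1}$ ``is necessarily totally geodesic'' is false: every free boundary spherical cap (and equidistant or horospherical piece) is totally umbilical and meets $\partial B_0$ orthogonally. Hence driving the $L^2$-norm of the traceless part of $A$ to zero cannot, by itself, identify the limit as totally geodesic. The paper's route is different: it shows $\bar T=T^\ast$ (strict convexity is preserved), deduces that $\partial M_{T^\ast}$ must be an equator (otherwise the uniformly parabolic graphical equation extends the flow past $T^\ast$), and then invokes the mean curvature rigidity Lemma \ref{mean curvature rigidity}, i.e.\ Schoen's strong maximum principle in the warped product splitting $\mathbb{H}^n\times_V\mathbb{R}$, to conclude $M_{T^\ast}$ is the flat disk. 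Relatedly, your curvature estimates are only sketched where the real work lies: the boundary sign for $|A|^2/H^2$ does not follow from umbilicity of $\partial B_0$ alone (by \eqref{normal derivative h ij} and \eqref{normal derivative h 11} the normal derivative of $|A|^2$ has no definite sign in general). The paper's actual mechanism is a triple of auxiliary functions built from static potentials whose boundary derivatives have favourable signs: $\log H+\log z^0$ for the upper bound on $H$ (using $\nabla_\eta\log H=-\coth\rho_0$ and $\nabla_\eta\log z^0=\tanh\rho_0$), $\log\tfrac1H-\log(\Lambda-\lambda z^1-z^0)$ for the lower bound on $H$, and $\log\tilde H-(n+1)\log z^1-\alpha t$ (with $\tilde H$ the sum of reciprocal principal curvatures) for the preservation of strict convexity. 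Your short-time existence, area identity, and finiteness of $T^\ast$ are fine and agree with the paper.
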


Using this convergence result, we establish the Willmore type inequality.

\begin{theorem}
  \label{willmore}Let
  \[ \lambda = \omega_{n - 1} \int_0^{\rho_0} \sinh^{n - 1} s \mathrm{d} s \]
  be the volume of $n$-dimensional hyperbolic geodesic ball of radius $\rho_0$
  and $\Lambda = 2 \coth \rho_0 \lambda^{\tfrac{2 - n}{n}}$. Any weakly convex
  free boundary hypersurface $M$ in $B_0$ satisfies the Willmore type
  inequality
  \begin{equation}
    |M|^{\tfrac{2 - n}{n}} \int_M (H^2 - n^2) + \Lambda | \partial M|
    \geqslant - n^2 \lambda^{\tfrac{2}{n}} + \Lambda \omega_{n - 1} \sinh^{n -
    1} \rho_0, \label{willmore for weak convex}
  \end{equation}
  where $\omega_{n - 1}$ is the volume of standard $(n - 1)$-sphere. Equality
  occurs if and only if when $M$ is totally geodesic.
\end{theorem}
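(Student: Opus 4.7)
The plan is to prove \eqref{willmore for weak convex} by a Geroch-type monotonicity applied to the free boundary inverse mean curvature flow provided by Theorem \ref{convergence theorem}. Starting from a smooth strictly convex $M_0 = M$, I would define the Hawking-like functional
\[
  Q(t) = |M_t|^{\tfrac{2-n}{n}}\int_{M_t}(H^2 - n^2)\,d\mu_t + \Lambda\,|\partial M_t|
\]
and aim to prove $\tfrac{d}{dt}Q(t)\leq 0$ on $[0, T^{\ast})$. The right hand side of \eqref{willmore for weak convex} is exactly $Q(T^{\ast})$: by Theorem \ref{convergence theorem}, $M_{T^{\ast}}$ is a totally geodesic disk meeting $\partial B_0$ orthogonally, so $H\equiv 0$, $|M_{T^{\ast}}|=\lambda$ and $|\partial M_{T^{\ast}}|=\omega_{n-1}\sinh^{n-1}\rho_0$, which give
\[
  Q(T^{\ast}) = \lambda^{\tfrac{2-n}{n}}\cdot(-n^2\lambda) + \Lambda\,\omega_{n-1}\sinh^{n-1}\rho_0 = -n^2\lambda^{\tfrac{2}{n}} + \Lambda\,\omega_{n-1}\sinh^{n-1}\rho_0.
\]
Hence monotonicity of $Q$ plus Theorem \ref{convergence theorem} yields $Q(0)\geq Q(T^{\ast})$, which is \eqref{willmore for weak convex}.

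To compute $\tfrac{d}{dt}Q$ I would use the standard IMCF evolution $\partial_t H = -\Delta_{M_t}(1/H) - (|A|^2 + \mathrm{Ric}(\nu,\nu))/H$, together with $\mathrm{Ric}=-n\,g$ in $\mathbb{H}^{n+1}$ and $\tfrac{d}{dt}d\mu_t = d\mu_t$. Integration by parts of $H\Delta(1/H)$ on $M_t$ produces a bulk contribution
\[
  -|M_t|^{\tfrac{2-n}{n}}\int_{M_t}\Bigl[\tfrac{2|\nabla H|^2}{H^2} + 2\bigl(|A|^2 - \tfrac{H^2}{n}\bigr)\Bigr]\,d\mu_t,
\]
which is non-positive by Cauchy--Schwarz on the principal curvatures, plus a boundary contribution involving $H^{-1}\partial_\mu H$ on $\partial M_t$. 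Separately, the free boundary condition $\langle\nu,\eta\rangle=0$ forces the conormal $\mu$ to coincide with $\eta$, so $\partial M_t$ moves along $\partial B_0$ with velocity $H^{-1}\nu$; combining this with $h_{\partial B_0}=\coth\rho_0\,g$ and the free boundary identity $h(\mu,e)=-h_{\partial B_0}(\nu,e)=0$ for $e\in T\partial M_t$ (so $\mu$ is a principal direction of $M_t$), one obtains
\[
  \tfrac{d}{dt}|\partial M_t| = \int_{\partial M_t}\tfrac{H^{\partial M_t\subset\partial B_0}}{H}\,d\sigma = \int_{\partial M_t}\tfrac{H - h(\mu,\mu)}{H}\,d\sigma.
\]
The main obstacle is showing that the two boundary terms combine, for the precise choice $\Lambda = 2\coth\rho_0\cdot\lambda^{(2-n)/n}$, into a non-positive quantity under weak convexity; this amounts to checking that the factor $2\coth\rho_0$ is forced by $h_{\partial B_0} = \coth\rho_0\,g$ together with the boundary consequence $\partial_\mu H = H\coth\rho_0 - (\text{tangential curvature terms})$, and that the remaining quadratic form in the principal curvatures at $\partial M_t$ is sign-definite when $h\geq 0$.

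Once the monotonicity is established for smooth strictly convex $M_0$, the inequality follows for such data. For a general weakly convex free boundary hypersurface $M$, I would approximate $M$ in $C^2$ by strictly convex free boundary hypersurfaces $M^\varepsilon$ (by an inward normal perturbation combined with a convex mollification of the support function), apply the inequality to each $M^\varepsilon$, and pass to $\varepsilon\to 0$; both sides of \eqref{willmore for weak convex} are continuous under $C^2$ convergence. For the equality case, $Q$ constant on $[0,T^{\ast}]$ forces, via the sign-definiteness of the bulk term, $\nabla H\equiv 0$ and $|A|^2\equiv H^2/n$, so each $M_t$ is totally umbilical with constant mean curvature, while the vanishing of the boundary contribution rules out the non-geodesic umbilical families (small geodesic spheres and equidistant or horospherical hypersurfaces meeting $\partial B_0$ orthogonally). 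The only surviving possibility consistent with convergence to the totally geodesic disk in Theorem \ref{convergence theorem} is that $M=M_0$ is itself totally geodesic.
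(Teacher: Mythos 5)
Your overall strategy (monotonicity of $q(t)=|M_t|^{\tfrac{2-n}{n}}\int_{M_t}(H^2-n^2)+\Lambda|\partial M_t|$ along the free boundary IMCF, evaluation at $T^{\ast}$ using convergence to the totally geodesic disk, and approximation of weakly convex data by strictly convex data) is the same as the paper's, and your bulk computation and the formula $\tfrac{d}{dt}|\partial M_t|=\int_{\partial M_t}(H-h(\mu,\mu))/H$ are correct. But the step you yourself flag as ``the main obstacle'' is genuinely left open, and it is not a matter of sign-checking a quadratic form in the principal curvatures. What closes it in the paper are two specific facts: (i) Stahl's boundary identity $\nabla_\eta \log H=-\coth\rho_0$ (equation \eqref{normal derivative H}; note your ansatz $\partial_\mu H=H\coth\rho_0-\cdots$ has the wrong sign), which turns the boundary term from integrating $H\Delta(1/H)$ by parts into exactly $-2\coth\rho_0|\partial M_t|$; and (ii) the volume bound $|M_t|<\lambda$, which you never invoke but which is indispensable: it gives $|M_t|^{\tfrac{2-n}{n}}>\lambda^{\tfrac{2-n}{n}}$ (since $\tfrac{2-n}{n}\leqslant 0$), so that $-2\coth\rho_0|M_t|^{\tfrac{2-n}{n}}|\partial M_t|<-\Lambda|\partial M_t|$ absorbs the term $\Lambda\partial_t|\partial M_t|<\Lambda|\partial M_t|$ coming from convexity. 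Without (ii) the choice of $\Lambda$ does not yield $q'(t)<0$.

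Two further gaps. First, the approximation of a weakly convex free boundary hypersurface by strictly convex ones is not achievable by ``inward normal perturbation plus convex mollification of the support function'': such perturbations destroy the orthogonality condition on $\partial B_0$, and when $\partial M$ is an equator no strictly convex free boundary approximant exists at all (the only weakly convex competitor is the totally geodesic disk, handled separately via Lemma \ref{mean curvature rigidity}). The paper instead runs the free boundary mean curvature flow for a short time and uses Stahl's tensor maximum principle with Neumann boundary conditions to produce strictly convex approximants (Lemma \ref{approximation of weak convex}). Second, your equality analysis cannot work as stated: Theorem \ref{convergence theorem} requires strictly convex initial data, so you cannot flow from a merely weakly convex $M$ and argue that $Q$ is constant along its flow; and for strictly convex $M$ the inequality is already strict, so equality never arises there. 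Equality must be ruled out at the level of the approximation, which the paper does quantitatively: if $\partial M$ is not an equator then $|M|<\lambda$ strictly, hence $T^{\ast}_{\varepsilon}=\log(\lambda/|M^{\varepsilon}|)$ is bounded below uniformly in $\varepsilon$, forcing a definite drop $cT$ in $q^{\varepsilon}$ that contradicts near-equality for small $\varepsilon$.
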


It is worth mentioning that Volkmann {\cite{volkmann-monotonicity-2016}}
established a Willmore type inequality for free boundary surfaces or more
generally two dimensional free boundary varifolds in the Euclidean unit ball.
Volkmann used a Simon type monotonicity argument. It seems an interesting
problem to generalize Volkmann's result to hyperbolic space.

The article is organized as follows:

In Section \ref{geometry of hypersurfaces}, we collect basic facts on the
geometry of hypersurfaces in hyperbolic space. In Section \ref{evolution and
boundary derivatives}, we calculate the evolutions and boundary derivatives of
various quantities under inverse mean curvature flow. In Section \ref{geometry
from convexity}, we study the geometry of a strictly convex free boundary
hypersurface from the point of view of convexity, and these results would be
used in Section \ref{moebius coordinates} to show that a convex free boundary
hypersurface is graphical over some hyperbolic subspace. In Section
\ref{important estimates}, we show important estimates vital to the proof of
Theorem \ref{convergence theorem}, establish the convergence theorem Theorem
\ref{convergence theorem}. In Section \ref{section willmore}, we prove the
Willmore inequality Theorem \ref{willmore} and characterize the equality case.

\

\text{{\bfseries{Acknowledgments}}} Research of the author is supported by
KIAS Grants under the research code MG074402. The author would also like to
thank Yizi Wang of Chinese University of Hong Kong for some useful discussions
and Julian Scheuer of Cardiff University for pointing out some inaccuracies in
an earlier version of the paper.

\section{geometry of hypersurfaces}\label{geometry of hypersurfaces}

We make heavy use the hyperboloidal model of the hyperbolic space. The book
{\cite{benedetti-lectures-1992}} is a good reference for basics of hyperbolic
spaces. The hyperbolic space $\mathbb{H}^{n + 1}$ can be realized as the upper
sheet of the two-sheeted hyperboloid in Minkowski space $\mathbb{R}^{n + 1,
1}$, that is
\[ \mathbb{H}^{n + 1} = \{z \in \mathbb{R}^{n + 1, 1} : \langle z, z \rangle =
   - 1, z^0 > 0\}, \]
where
\[ \langle z, w \rangle = - z^0 w^0 + z^1 w^1 + \cdots + z^{n + 1} w^{n + 1} .
\]
The single-sheeted hyperboloid in Minkowski space known as the de Sitter
sphere is a timelike hypersurface and given by
\[ \mathrm{d} \mathbb{S}^{n, 1} = \{z \in \mathbb{R}^{n + 1, 1} : \langle z, z
   \rangle = 1\} . \]
The Poincar{\'e} ball model of hyperbolic space is good for visualization of
some concepts. A point $z$ of the hyperboloid model is send to the
hyperboloidal coordinates via the stereographic projection $\pi$ and
\[ x^i = \tfrac{z^i}{1 + z^0}, i \geqslant 1 \]
of the Poincar{\'e} ball model. The metric $b$ of the ball model is then given
by
\[ b = \tfrac{4}{(1 - |x|^2)^2} | \mathrm{d} x|^2, |x| < 1. \]
On the other hand, a point $x$ in the Poincar{\'e} ball model is sent to
\begin{equation}
  z^0 = \tfrac{1 + |x|^2}{1 - |x|^2}, z^i = \tfrac{2 x^i}{1 - |x|^2}, i
  \geqslant 1 \label{x to z}
\end{equation}
of the hyperboloid. This is the inverse of the stereographic projection.

The distance $\rho (z, w)$ between two points $z, w \in \mathbb{H}^{n + 1}$ in
hyperboloid model is
\begin{equation}
  \cosh \rho (z, w) = - \langle z, w \rangle . \label{distance}
\end{equation}
In particular, the distance $\rho (z)$ from $z$ to $e_0 = (1, 0)$ satisfies
\begin{equation}
  \cosh \rho (z) = z^0 \label{distance to origin} .
\end{equation}
In the ball model, the distance $\rho_x$ from $x$ to $o = (0, \ldots, 0)$ is
\begin{equation}
  \rho_x = \int_0^{|x|} \tfrac{2}{1 - |s|^2} \mathrm{d} s = \log \left(
  \tfrac{1 + |x|}{1 - |x|} \right), \sinh \rho_x = \tfrac{2 |x|}{1 - |x|^2} .
  \label{distance of ball model}
\end{equation}
A geodesic sphere $\partial B_0$ centered at $(1, 0)$ is given by
\[ \{z \in \mathbb{H}^{n + 1} : z^0 = \cosh \rho_0 \} . \]
By the distance formula \eqref{distance to origin}, the outward normal to
$\partial B_0$ is
\[ \nabla \rho = \tfrac{1}{\sinh \rho} \nabla z^0 = \tfrac{1}{\sinh \rho} (-
   e_0 + z^0 z) . \]
A typical example of a free boundary convex hypersurface inside the ball $B_0$
is the spherical cap which is part of a geodesic sphere.

\subsection{Differential geometry of hypersurfaces}

An immersion into the hyperbolic space which is given by a map
\[ z : \mathbb{D} \to \mathbb{R}^{n + 1, 1} \]
from an $n$-dimensional disk $\mathbb{D}$ in $\mathbb{H}^{n + 1}$ is a
codimension two submanifold with boundary in $\mathbb{R}^{n + 1, 1}$. We
denote this hypersurface by $M$. Let $\xi^i$ be the coordinates on
$\mathbb{D}$, then $z_i : = \tfrac{\partial z}{\partial \xi^i}$ is a tangent
vector at $z (\xi)$, satisfying the relation $\langle z, z_i \rangle = 0$. We
reserve the superscript notation $z^i$ and the alike to denote coordinate
components and subscript to denote taking derivatives. The induced metric on
$M$ is then
\[ g_{i j} = \langle z_i, z_j \rangle . \]
Let $\nu$ be the normal vector of $M$ in $\mathbb{H}^{n + 1}$ at $z$, the
second fundamental form $h_{i j}$ of $M$ in $\mathbb{H}^{n + 1}$ is
\[ h_{i j} = - \langle z_{i j}, \nu \rangle . \]
We have the Weingarten relation
\begin{equation}
  z_{i j} = g_{i j} z - h_{i j} \nu . \label{hessian z}
\end{equation}
Taking the trace of the above, and written in components,
\begin{equation}
  \Delta z^{\alpha} = n z^{\alpha} - H \nu^{\alpha}, \alpha = 0, 1, \ldots, n
  + 1. \label{laplacian z}
\end{equation}
Replacing the Hessian $\nabla_i \nabla_j$ in \eqref{hessian z} by the Hessian
of the hyperbolic space, we use the letter $D$ to denote connection on
$\mathbb{H}^{n + 1}$, instead we get
\[ D_{\alpha} D_{\beta} z = b_{\alpha \beta} z. \]
The functions $z^0$ and $z^i$, $i = 1, \ldots, n + 1$ are called static
potentials in general relativity literature {\cite{chrusciel-mass-2003}}. The
components $\nu^{\alpha}$ of the normal $\nu$ are also
\[ \nu^{\alpha} = \langle D z^{\alpha}, \nu \rangle . \]
The vector $\nu$ is spacelike in $\mathbb{R}^{n + 1, 1}$, hence an element of
the de Sitter sphere $\mathrm{d} \mathbb{S}^{n, 1}$. We call the map $\nu : M
\to \mathrm{d} \mathbb{S}^{n, 1}$ the \text{{\itshape{Gauss map}}}, and to
emphasize the dependence on $z$, we use the notation $\tilde{z} := \nu$ as
well.

\subsection{Christoffel symbols in the Poincar{\'e} ball model}

Let $u = \tfrac{2}{1 - |x|^2}$. Then
\[ D_i \log u = \frac{2 x^i}{1 - |x|^2} = z^i . \]
The metric $b$ is conformal to Euclidean metric, we use $\partial_i$ to denote
the $i$-th unit vector in Euclidean space, so the Christoffel symbol defined
by $D_j \partial_i = \Gamma_{i j}^k \partial_k$ is given in the following
\begin{align}
& \Gamma_{i j}^k \\
= & \delta_i^k D_j \log u + \delta_j^k D_i \log u - \delta_{i j} D^k \log u
\\
= & \delta_i^k z^j + \delta_j^k z^i - \delta_{i j} z_k .
\end{align}
So
\[ D_j \partial_i = z^j \partial_i + z^i \partial_j - \delta_{i j} \hat{z}, \]
where we use $\hat{z} = (z^1, \ldots, z^{n + 1}) \in \mathbb{B}^{n + 1}$.

The formula
\[ D_j (x^l \partial_l) = \partial_j + x^l D_j \partial_l \]
is useful for later use.

\subsection{Useful computations}

The vector $\partial_i$ in the Poincar{\'e} ball model is (by stereoprojection
map $\pi$) under the hyperboloidal model is
\[ \pi_{\ast} (\partial_i) = \frac{4 x^i}{(1 - |x|^2)^2} e_0 + \frac{2}{1 -
   |x|^2} e_i + \frac{4 x^i x^j}{(1 - |x|^2)^2} e_j, \]
Note that $1 - |x|^2 = \tfrac{2}{1 + z^0}$, so we have that
\[ \pi_{\ast} (\partial_i) = (1 + z^0) z^i e_0 + (1 + z^0) e_i + z^i z^j e_j .
\]
From the above,
\begin{equation}
  \pi_{\ast} (x) = ((z^0)^2 - 1) e_0 + z^0 z^i e_i . \label{conformal killing
  vector x}
\end{equation}
And
\begin{equation}
  \langle \pi_{\ast} (x), \nu \rangle = - ((z^0)^2 - 1) \nu^i + z^0 z^i \nu^i
  = \nu^0, \label{normal component of x}
\end{equation}
where we have used $\langle z, \nu \rangle = 0$.

\section{Evolution equations and boundary derivatives}\label{evolution and
boundary derivatives}

In this section, we derive evolution equations and boundary derivatives of
various quantities.

\begin{lemma}
  Let $X$ be a tangent vector field on $\partial M$, then
  \begin{equation}
    A (X, \eta) = 0. \label{second fundamental form cross term}
  \end{equation}
\end{lemma}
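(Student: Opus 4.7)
The plan is to differentiate the free boundary condition $\langle \nu, \eta \rangle = 0$ in a direction tangent to $\partial M$ and then combine this with the fact that $\partial B_0$ is a totally umbilical geodesic sphere in $\mathbb{H}^{n+1}$. Before starting, it is worth observing that at every point of $\partial M$ the orthogonality $\langle \nu, \eta \rangle = 0$ forces $\eta \in TM$, so $A(X, \eta)$ is a genuine component of the second fundamental form of $M$ along the boundary. In shape-operator form (which is compatible with the sign convention $h_{ij} = -\langle z_{ij}, \nu\rangle$ from the excerpt, via differentiating $\langle z_j, \nu\rangle = 0$), one may write $A(X, \eta) = \langle D_X \nu, \eta\rangle$.

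Now take $X$ tangent to $\partial M$, hence also tangent to $\partial B_0$, and extend it smoothly so that the identity $\langle \nu, \eta\rangle \equiv 0$ can be differentiated along $X$:
\[ \langle D_X \nu, \eta\rangle + \langle \nu, D_X \eta\rangle = 0. \]
Because $\partial B_0$ is a geodesic sphere of radius $\rho_0$, it is totally umbilical with shape operator $\coth \rho_0 \cdot \mathrm{Id}$; equivalently, $D_X \eta = \coth \rho_0 \, X$ for every $X$ tangent to $\partial B_0$. Substituting this in the second inner product gives
\[ \langle \nu, D_X \eta\rangle = \coth \rho_0 \, \langle \nu, X\rangle = 0, \]
since $X \in TM$ while $\nu$ is normal to $M$. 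Combining the two displays yields $A(X, \eta) = \langle D_X \nu, \eta\rangle = 0$.

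There is essentially no obstacle here; the argument reduces to differentiating a single scalar identity and applying umbilicity of $\partial B_0$. The only item worth pausing on is the preliminary observation that $\eta$ actually lies in $TM$ along $\partial M$, so that the symbol $A(X, \eta)$ has a meaning and so that the free boundary condition supplies simultaneously the identity to differentiate and the ambient geometric input needed to finish.
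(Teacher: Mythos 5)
Your proof is correct and follows essentially the same route as the paper: differentiate the free boundary condition $\langle \nu,\eta\rangle=0$ along $X$ and use that $\partial B_0$ is umbilical so that $\langle D_X\eta,\nu\rangle=\coth\rho_0\langle X,\nu\rangle=0$. Your version just spells out the umbilicity step that the paper states more tersely.
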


\begin{proof}
  We differentiate the relation $\langle \eta, \nu \rangle = 0$ along $X$ on
  $\partial M$ and get
  \[ \langle D_X \eta, \nu \rangle + \langle D_X \nu, \eta \rangle = 0. \]
  So
  \[ A (X, \eta) = \langle D_X \nu, \eta \rangle = - \langle D_X \eta, \nu
     \rangle . \]
  Recall that the vector $\eta$ is also outward normal of the geodesic sphere
  $\partial B_0$, $X$ and $\nu$ are tangent to $\partial B_0$, so $\langle D_X
  \eta, \nu \rangle = 0$. Hence $A (X, \eta) = 0$.
\end{proof}

\begin{lemma}
  \label{lemma normal derivative h ij}Let $X$ and $Y$ be tangent vector fields
  on $\partial M$,
  \begin{equation}
    (\nabla_{\eta} A) (X, Y) = \coth \rho_0 [A (\eta, \eta) \langle X, Y
    \rangle - A (X, Y)] . \label{normal derivative h ij}
  \end{equation}
\end{lemma}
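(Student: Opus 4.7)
The plan is to exploit the constant sectional curvature of $\mathbb{H}^{n+1}$ together with the umbilicity of the geodesic sphere $\partial B_0$. Because the ambient Riemann tensor paired with three vectors tangent to $M$ and the normal $\nu$ vanishes identically, the Codazzi identity on $M$ reduces to full symmetry of $\nabla A$ in its three slots. In particular,
\[ (\nabla_{\eta} A)(X, Y) = (\nabla_X A)(\eta, Y), \]
which converts a normal derivative into a tangential derivative along $\partial M$, where the free boundary condition is readily available.

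Next, expand the right-hand side via the Leibniz rule:
\[ (\nabla_X A)(\eta, Y) = X\bigl( A(\eta, Y) \bigr) - A(\nabla_X \eta, Y) - A(\eta, \nabla_X Y). \]
The first term vanishes on $\partial M$ since the previous lemma gives $A(\eta, Y) \equiv 0$ there and $X$ is tangent to $\partial M$. For the remaining two terms, the key geometric input is that $\partial B_0$ is totally umbilical in $\mathbb{H}^{n+1}$ with principal curvature $\coth \rho_0$, so $D_W \eta = \coth \rho_0 \, W$ for every $W$ tangent to $\partial B_0$. Applied to $X$, which is tangent to both $M$ and $\partial B_0$, this yields $\nabla_X \eta = \coth \rho_0 \, X$ (the ambient vector $D_X \eta$ already lies in $TM$, so no projection is needed), whence $A(\nabla_X \eta, Y) = \coth \rho_0 \, A(X, Y)$. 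For the last term, decompose $\nabla_X Y$ along $\partial M$ into its $T\partial M$-component and its $\eta$-component; the previous lemma annihilates the first against $A(\eta, \cdot)$, while differentiating $\langle Y, \eta\rangle = 0$ along $X$ and substituting $\nabla_X \eta = \coth \rho_0 X$ determines the $\eta$-coefficient as $-\coth \rho_0 \, \langle X, Y\rangle$. Thus $A(\eta, \nabla_X Y) = -\coth \rho_0 \, \langle X, Y\rangle \, A(\eta, \eta)$, and combining the three contributions with their signs assembles exactly to the stated formula.

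I do not foresee any real obstacle: the argument is a careful but routine use of Codazzi symmetry in a space form together with the umbilicity of $\partial B_0$. The main bookkeeping point is the distinction between the ambient connection $D$ and the intrinsic $\nabla$ on $M$ when manipulating $\nabla_X \eta$; these agree on vectors already tangent to $M$, which is precisely the situation throughout the calculation.
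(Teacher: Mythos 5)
Your argument is correct and is essentially the paper's proof in a slightly reordered form: the paper differentiates the identity $A(X,\eta)=0$ along a boundary-tangent direction and invokes Codazzi at the end, whereas you invoke Codazzi first and then perform the same Leibniz expansion, using the same two inputs (the vanishing cross term $A(\cdot,\eta)=0$ on $\partial M$ and the umbilicity $\nabla_X\eta=\coth\rho_0\,X$ of the geodesic sphere). No gap; the sign bookkeeping checks out.
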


\begin{proof}
  We differentiate \eqref{second fundamental form cross term} along $Y$,
  \[ \nabla_Y (A (X, \eta)) = 0. \]
  By product rule,
  \[ (\nabla_Y A) (X, \eta) = - A (\nabla_Y X, \eta) - A (X, \nabla_Y \eta) .
  \]
  In $T M$, the vector field is decomposed into components normal to $\eta$
  and parallel to $\eta$, but because of \eqref{second fundamental form cross
  term},
  \[ A (\nabla_Y X, \eta) = A (\eta, \eta) \langle \nabla_Y X, \eta \rangle =
     - A (\eta, \eta) \langle \nabla_Y \eta, X \rangle, \]
  and that $\langle \nabla_Y \eta, X \rangle = \coth \rho_0 \langle X, Y
  \rangle$, so
  \[ - A (\nabla_Y X, \eta) = \coth \rho_0 A (\eta, \eta) \langle X, Y \rangle
     . \]
  The vector field $\eta$ is unit normal to $\partial M$ in $M$, so $\langle
  \nabla_Y \eta, \eta \rangle = 0$ and $\nabla_Y \eta$ coincides with the
  second fundamental form of $\partial B_0$ in $B_0$, so $\nabla_Y \eta =
  \coth \rho_0 Y$. So
  \[ - A (X, \nabla_Y \eta) = - \coth \rho_0 A (X, Y) . \]
  And by Codazzi equation,
  \[ (\nabla_Y A) (\eta, X) = (\nabla_{\eta} A) (X, Y) . \]
  Collecting all the above leads to \eqref{normal derivative h ij}.
\end{proof}

\begin{lemma}
  \label{lemma normal derivative h 11}We have
  \begin{equation}
    (\nabla_{\eta} A) (\eta, \eta) = - n \coth \rho_0 A (\eta, \eta) .
    \label{normal derivative h 11}
  \end{equation}
\end{lemma}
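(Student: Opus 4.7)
The plan is to reduce to the trace version of Lemma \ref{lemma normal derivative h ij} once the boundary identity $\nabla_{\eta} H = -\coth \rho_0\, H$ on $\partial M$ has been established. The mix of ingredients is: the previous lemma (handling the tangent-tangent block of $\nabla_\eta A$), the contracted Codazzi equation in a constant-curvature ambient space, and a flow-compatibility computation that determines $\nabla_{\eta} H$ at the boundary.

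First I would fix a boundary point $p$ and choose an orthonormal frame $e_1,\dots,e_{n-1}$ of $T_p \partial M$, so that $\{e_1,\dots,e_{n-1},\eta\}$ is orthonormal in $T_p M$. Tracing Lemma \ref{lemma normal derivative h ij} over $X=Y=e_a$, $a=1,\dots,n-1$, and using $\sum_a A(e_a,e_a)=H-A(\eta,\eta)$ gives
\[
\sum_{a=1}^{n-1} (\nabla_{\eta} A)(e_a,e_a) \;=\; \coth \rho_0\, \bigl[\, n\,A(\eta,\eta) - H \,\bigr].
\]
Since $\mathbb{H}^{n+1}$ is a space form, the Codazzi tensor $\nabla A$ is totally symmetric; in particular $\mathrm{tr}(\nabla_{\eta} A)=\nabla_{\eta} H$, which rearranges to
\[
(\nabla_{\eta} A)(\eta,\eta) \;=\; \nabla_{\eta} H \;-\; \coth \rho_0\,\bigl[\,n\,A(\eta,\eta)-H\,\bigr].
\]

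The remaining ingredient is the boundary derivative $\nabla_{\eta} H$. I would obtain it from the free-boundary compatibility of the IMCF: the identity $\langle \nu,\eta\rangle =0$ must be preserved in $t$ on $\partial \mathbb{D}$. Differentiating in $t$ using $\partial_t F = H^{-1}\nu$, the standard identity $\partial_t \nu = \nabla^M H/H^2$, and $\partial_t\eta = H^{-1} D_\nu \nabla\rho = (\coth\rho_0/H)\,\nu$ at the boundary (with the last equality from the Hessian $D^2\rho=\coth\rho (b-d\rho\otimes d\rho)$ and the free-boundary condition), one obtains
\[
0 \;=\; \frac{\nabla_{\eta} H}{H^2} + \frac{\coth \rho_0}{H}, \qquad\text{i.e.}\qquad \nabla_{\eta} H \;=\; -\coth\rho_0\, H.
\]
Substituting back into the displayed formula for $(\nabla_{\eta}A)(\eta,\eta)$ cancels the $\coth\rho_0 H$ terms and gives the claim.

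The main obstacle is the derivation of $\nabla_\eta H = -\coth\rho_0\,H$: this step uses the full flow structure (rather than the intrinsic geometry of $M$ alone) and requires a careful calculation of $\partial_t \eta$ at the boundary from the motion of $F$ along $\partial B_0$, together with the umbilicity of the geodesic sphere. Once this compatibility identity is in hand, the rest is the bookkeeping above combining the trace of Lemma \ref{lemma normal derivative h ij} with the contracted Codazzi identity.
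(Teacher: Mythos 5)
Your proposal is correct and follows essentially the same route as the paper: decompose the trace $\nabla_\eta H = g^{ij}\nabla_\eta h_{ij}$ into its tangential block (handled by Lemma \ref{lemma normal derivative h ij}) plus the $(\eta,\eta)$ component, and combine with the boundary Neumann condition $\nabla_\eta H = -\coth\rho_0\,H$. The only differences are cosmetic: the paper quotes $\nabla_\eta H = -\coth\rho_0\,H$ from Stahl (stated as \eqref{normal derivative H} in the next lemma) rather than rederiving it from the $t$-invariance of $\langle\nu,\eta\rangle=0$ as you do, and your invocation of Codazzi for $\mathrm{tr}(\nabla_\eta A) = \nabla_\eta H$ is superfluous since this is just metric compatibility of $\nabla$ with the trace.
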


\begin{proof}
  We decompose $H$, so
  \[ - \coth \rho_0 H = \nabla_{\eta} H = [(g^{i j} - \eta^i \eta^j) + \eta^i
     \eta^j] \nabla_{\eta} h_{i j} . \]
  Note that $(g^{i j} - \eta^i \eta^j) \nabla_{\eta} h_{i j}$ is just trace of
  the tensor $\nabla_{\eta} h_{i j}$ restricted to tangent space of $\partial
  M$, from \eqref{normal derivative h ij},
\begin{align}
& (g^{i j} - \eta^i \eta^j) \nabla_{\eta} h_{i j} \\
= & \coth \rho_0 [A (\eta, \eta) (n - 1)
-\ensuremath{\operatorname{tr}}_{\partial M} A] \\
= & \coth \rho_0 [n A (\eta, \eta) - H],
\end{align}
  leads to \eqref{normal derivative h 11}.
\end{proof}

\subsection{Curvatures}

We are interested in the evolution of the second fundamental form and the mean
curvature. They are given by the following.

\begin{lemma}
  \label{evolution H normal derivative H}The Weingarten tensor $h_i^j := g^{j
  k} h_{i k}$ evolves by
  \begin{equation}
    \partial_t h^j_i = - \nabla_i {\nabla^j}  \tfrac{1}{H} - (- \delta_i^j +
    h_i^k h_k^j) \tfrac{1}{H} . \label{evolution weingarten}
  \end{equation}
  The quantity $\log H$ evolves by the equation
  \begin{equation}
    (\partial_t - \tfrac{1}{H^2} \Delta) \log H = - \tfrac{| \nabla H|^2}{H^4}
    - (- n + |A|^2) \tfrac{1}{H^2}, \label{evolution log H}
  \end{equation}
  and on the boundary $\partial M$,
  \begin{equation}
    \nabla_{\eta} \log H = - \coth \rho_0 . \label{normal derivative H}
  \end{equation}
\end{lemma}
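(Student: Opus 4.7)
The plan is to derive the three formulas in sequence, using the standard variational machinery for normal curvature flows specialized to the hyperbolic ambient and to the free boundary geometry.

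For \eqref{evolution weingarten} I would start from the general formula governing the evolution of the second fundamental form under a normal flow $\partial_t F = \phi\nu$ in an ambient space of constant sectional curvature $K$,
\[ \partial_t h_{ij} = -\nabla_i \nabla_j \phi + \phi\, h_{ik} h^k_j - \phi\, \overline{R}_{i\nu j\nu}, \]
obtained by differentiating $h_{ij} = -\langle D_i \partial_j F,\nu\rangle$ in $t$ together with $\partial_t \nu = -\nabla\phi$ (which comes from differentiating $\langle\nu,\nu\rangle=1$ and $\langle\nu,\partial_j F\rangle=0$). In $\mathbb H^{n+1}$ one has $\overline{R}_{i\nu j\nu} = -g_{ij}$, so this last term contributes $+\phi g_{ij}$. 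Raising one index via $\partial_t g^{jk} = -2\phi\, h^{jk}$ and setting $\phi = 1/H$ produces \eqref{evolution weingarten}; the $-\delta^j_i$ piece is precisely the contribution of the negative ambient curvature.

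For \eqref{evolution log H} I would trace \eqref{evolution weingarten} to get
\[ \partial_t H = -\Delta(1/H) + \frac{n-|A|^2}{H}, \]
and then pass to $\log H$ using the identities
\[ \partial_t \log H = \frac{\partial_t H}{H}, \quad \Delta(1/H) = -\frac{\Delta H}{H^2} + 2\frac{|\nabla H|^2}{H^3}, \quad \Delta\log H = \frac{\Delta H}{H} - \frac{|\nabla H|^2}{H^2}. \]
The two $\Delta H/H^3$ contributions cancel between $\partial_t\log H$ and $H^{-2}\Delta\log H$, and the gradient terms combine to yield the $-|\nabla H|^2/H^4$ on the right-hand side of \eqref{evolution log H}.

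For the boundary identity \eqref{normal derivative H} I would work at a point of $\partial M$ in an orthonormal frame with $e_1 = \eta$ and $e_2,\ldots,e_n$ tangent to $\partial M$. Decomposing
\[ \nabla_\eta H = (\nabla_\eta A)(\eta,\eta) + \sum_{\alpha\geq 2}(\nabla_\eta A)(e_\alpha,e_\alpha), \]
applying Lemma \ref{lemma normal derivative h 11} to the first term and tracing Lemma \ref{lemma normal derivative h ij} over the tangential directions (using $\operatorname{tr}_{\partial M} A = H - A(\eta,\eta)$) for the second gives
\[ \nabla_\eta H = -n\coth\rho_0\, A(\eta,\eta) + \coth\rho_0 \bigl[(n-1)A(\eta,\eta) - (H-A(\eta,\eta))\bigr] = -\coth\rho_0\, H, \]
and dividing by $H$ yields \eqref{normal derivative H}. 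The main obstacle is keeping the sign of the ambient curvature straight in \eqref{evolution weingarten}, since a sign error on $K=-1$ flips the $-\delta^j_i$ term and propagates into the wrong coefficient of $n$ in \eqref{evolution log H}. A minor logical subtlety is that $\nabla_\eta H = -\coth\rho_0\, H$ is itself used inside the proof of Lemma \ref{lemma normal derivative h 11}; to avoid circularity one can derive \eqref{normal derivative h 11} and \eqref{normal derivative H} simultaneously from Lemma \ref{lemma normal derivative h ij} together with the splitting $H = A(\eta,\eta) + \operatorname{tr}_{\partial M} A$ on $\partial M$.
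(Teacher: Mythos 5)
Your derivations of \eqref{evolution weingarten} and \eqref{evolution log H} are correct and essentially the paper's route: the paper simply cites Andrews for the Weingarten evolution and then traces and converts to $\log H$ exactly as you do (your sign bookkeeping for $\overline{R}_{i\nu j\nu}=-g_{ij}$ and the cancellation of the $\Delta H/H^3$ terms both check out).

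The boundary identity \eqref{normal derivative H} is where there is a genuine gap. As you yourself observe, using Lemma \ref{lemma normal derivative h 11} is circular, since the paper proves that lemma \emph{from} $\nabla_\eta H=-\coth\rho_0\,H$. But your proposed repair --- deriving \eqref{normal derivative h 11} and \eqref{normal derivative H} ``simultaneously'' from Lemma \ref{lemma normal derivative h ij} and the splitting $H=A(\eta,\eta)+\operatorname{tr}_{\partial M}A$ --- cannot work: tracing Lemma \ref{lemma normal derivative h ij} over the tangential directions yields only the single linear relation
\[ \nabla_\eta H-(\nabla_\eta A)(\eta,\eta)=\coth\rho_0\,[\,n A(\eta,\eta)-H\,], \]
one equation in the two unknowns $\nabla_\eta H$ and $(\nabla_\eta A)(\eta,\eta)$. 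That \eqref{normal derivative H} is not recoverable from the free boundary geometry alone is shown by a concrete counterexample: a spherical cap meeting $\partial B_0$ orthogonally has constant mean curvature, hence $\nabla_\eta\log H=0\neq-\coth\rho_0$. The identity is a property of \emph{solutions of the flow}, and the missing input is the time-differentiated Neumann condition (this is the content of the Stahl reference the paper invokes): on $\partial M$ one has $0=\partial_t\langle\nu,\eta\rangle=\langle-\nabla\tfrac1H,\eta\rangle+\tfrac1H\langle D_\nu\eta,\nu\rangle=-\nabla_\eta\tfrac1H+\tfrac{\coth\rho_0}{H}$, using $\partial_t\nu=-\nabla(1/H)$ and that $\partial B_0$ is umbilic with principal curvature $\coth\rho_0$; this gives $\nabla_\eta(1/H)=\coth\rho_0/H$, i.e.\ $\nabla_\eta\log H=-\coth\rho_0$. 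With that in hand, your tangential trace computation then correctly delivers \eqref{normal derivative h 11} as a consequence, which is the paper's actual logical order.
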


\begin{proof}
  For the proof \eqref{evolution weingarten}, see for example
  {\cite{andrews-contraction-1994}}. Taking the trace of \eqref{evolution
  weingarten},
  \[ \partial_t H = - \Delta \tfrac{1}{H} - (- n + |A|^2) \tfrac{1}{H} . \]
  This implies
  \[ (\partial_t - \tfrac{1}{H^2} \Delta) H = - \tfrac{2}{H^3} | \nabla H|^2 -
     (- n + |A|^2) \tfrac{1}{H} . \]
  and
  \[ (\partial_t - \tfrac{1}{H^2} \Delta) \tfrac{1}{H} = (- n + |A|^2)
     \tfrac{1}{H^3} . \]
  We obtain the evolution for $\log H$, and $\nabla_{\eta} \log H = - \coth
  \rho_0$ follows from {\cite{stahl-convergence-1996}}.
\end{proof}

\begin{lemma}
  Let $\tilde{h}_{i j}$ be the inverse matrix of $h_{i j}$, then the evolution
  of $\tilde{H} := g_{i j} \tilde{h}^{i j}$ is given by
\begin{align}
& \partial_t \log \tilde{H} - \tfrac{1}{H^2} \Delta \log \tilde{H}
\\
= & \tfrac{2 n}{\tilde{H} H} - \tfrac{n}{H^2} - \tfrac{|A|^2}{H^2} +
\tfrac{1}{H^2 \tilde{H}^2} | \nabla \tilde{H} |^2 \\
& + \tfrac{2}{H^2 \tilde{H}} \left( \tfrac{1}{H} g_{i j} \tilde{h}^{i k}
\tilde{h}^{j l} \nabla_k H \nabla_l H - g_{i j} \tilde{h}^{i k}
\tilde{h}^{j p} \tilde{h}^{l q} \nabla h_{p q} \cdot \nabla h_{k l}
\right) . \label{evolution harmonic mean curvature}
\end{align}
\end{lemma}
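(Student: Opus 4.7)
The plan is a direct computation combining the Weingarten evolution from Lemma~\ref{evolution H normal derivative H}, the standard IMCF metric evolution $\partial_t g_{ij} = \tfrac{2}{H} h_{ij}$, the matrix-inverse calculus for $\tilde{h}^{ij}$, and the Simons identity in $\mathbb{H}^{n+1}$. No new geometric input is required; the work lies entirely in bookkeeping.

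First I would differentiate the defining relation $\tilde{h}^{ij} h_{jk} = \delta^i_k$ in both time and space to obtain
\begin{equation}
\partial_t \tilde{h}^{ij} = -\tilde{h}^{ik} \tilde{h}^{jl} \partial_t h_{kl}, \qquad \nabla_a \tilde{h}^{ij} = -\tilde{h}^{ik} \tilde{h}^{jl} \nabla_a h_{kl},
\end{equation}
and after another spatial derivative and trace,
\begin{equation}
\Delta \tilde{h}^{ij} = 2\, \tilde{h}^{ik} \tilde{h}^{jp} \tilde{h}^{lq} \nabla^a h_{pq} \nabla_a h_{kl} - \tilde{h}^{ik} \tilde{h}^{jl} \Delta h_{kl},
\end{equation}
the factor $2$ arising from the symmetry $h_{ij} = h_{ji}$. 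The evolution of the covariant $h_{ij} = g_{jk} h_i^k$ follows from \eqref{evolution weingarten} and the product rule as $\partial_t h_{ij} = \tfrac{1}{H} h_{ik} h^k_j + \tfrac{g_{ij}}{H} - \nabla_i \nabla_j \tfrac{1}{H}$. Substituting this into $\partial_t \tilde{h}^{ij}$ and using the algebraic identities $\tilde{h}^{ik} h^p_k = g^{ip}$ and $\tilde{h}^{ik} h_{kl} = \delta^i_l$ collapses the algebraic terms; tracing with $g_{ij}$ and adding $(\partial_t g_{ij}) \tilde{h}^{ij} = 2n/H$ produces an explicit $\partial_t \tilde{H}$.

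For the Laplacian I would invoke the Simons identity on a background of sectional curvature $-1$,
\begin{equation}
\Delta h_{kl} = \nabla_k \nabla_l H + H h_{kp} h^p_l - |A|^2 h_{kl} + H g_{kl} - n h_{kl},
\end{equation}
and substitute into the formula for $\Delta \tilde{h}^{ij}$. The hyperbolic background contributes the $H g_{kl} - n h_{kl}$ piece, which on tracing with $g_{ij}\tilde{h}^{ik}\tilde{h}^{jl}$ gives $-H\, g_{ij}\tilde{h}^{ik}\tilde{h}^{jl} g_{kl} + n\tilde{H}$ inside $\Delta \tilde{H}$. Now assembling $\partial_t \tilde{H} - \tfrac{1}{H^2}\Delta \tilde{H}$ and using $\nabla_k \nabla_l \tfrac{1}{H} = -\tfrac{1}{H^2}\nabla_k \nabla_l H + \tfrac{2}{H^3}\nabla_k H\, \nabla_l H$ triggers two decisive cancellations: the $\nabla\nabla H$ terms from the two sides cancel identically (leaving only the $\nabla H \otimes \nabla H$ piece of the claim), and the spurious $H^{-1} g_{ij} \tilde{h}^{ik} \tilde{h}^{jl} g_{kl}$ contribution from $\partial_t \tilde{H}$ cancels against $-H g_{kl}$ coming from Simons. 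Dividing by $\tilde{H}$ and using
\begin{equation}
\partial_t \log \tilde{H} - \tfrac{1}{H^2} \Delta \log \tilde{H} = \tfrac{1}{\tilde{H}}\bigl(\partial_t \tilde{H} - \tfrac{1}{H^2} \Delta \tilde{H}\bigr) + \tfrac{1}{H^2 \tilde{H}^2} |\nabla \tilde{H}|^2
\end{equation}
then yields \eqref{evolution harmonic mean curvature}.

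The main obstacle is not conceptual but combinatorial: the several contractions with up to three factors of $\tilde{h}^{ij}$ must be tracked without losing index symmetry, and the sign and magnitude of the ambient-curvature contribution in the Simons identity (which is responsible for the $-n/H^2$ and $2n/(H\tilde{H})$ coefficients) must be watched so that its cancellation with the $|\tilde{h}|^2$-type term inherited from the time derivative is exact.
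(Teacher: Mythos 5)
Your proposal is correct and follows essentially the same route as the paper: compute $\partial_t h_{ij}$ from the Weingarten evolution and the metric evolution, use matrix-inverse calculus for $\partial_t\tilde h^{ij}$, $\nabla\tilde h^{ij}$ and $\Delta\tilde h^{ij}$, insert the Simons identity with the hyperbolic curvature terms, and convert to $\log\tilde H$ at the end. The cancellations you identify (the $\nabla\nabla H$ terms and the $H^{-1}g_{ij}\tilde h^{ik}\tilde h^{jl}g_{kl}$ term against the $Hg_{kl}$ piece of Simons) are exactly the ones that occur in the paper's computation.
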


\begin{proof}
  We write in short $\phi = \tfrac{1}{H}$ for some convenience. First,
  $\partial_t g_{i j} = 2 \phi h_{i j}$ and
  \[ \partial_t h_i^j = - \nabla_i \nabla^j \phi - \phi (h_i^k h_k^j -
     \delta_i^j) . \]
  So
\begin{align}
& \partial_t h_{i j} \\
= & \partial_t (g_{i k} h_j^k) \\
= & h_j^k \partial_t g_{i k} + g_{i k} \partial_t h_j^k \\
= & 2 \phi h_j^k h_{i k} - \nabla_i \nabla_j \phi - \phi (h_i^k h_{k j} -
g_{i j}) \\
= & - \nabla_i \nabla_j \phi + \phi g^{k s} h_{i s} h_{j k} + \phi g_{i j}
.
\end{align}
  Now
\begin{align}
& \partial_t \tilde{H} \\
= & \partial_t (g_{i j} \tilde{h}^{i j}) \\
= & \tilde{h}_{i j} \partial_t g_{i j} + g_{i j} \partial_t \tilde{h}^{i
j} \\
= & 2 \phi h_{i j} \tilde{h}^{i j} - g_{i j} \tilde{h}^{i k} \tilde{h}^{j
l} \partial_t h_{k l} \\
= & 2 n \phi - g_{i j} \tilde{h}^{i k} \tilde{h}^{j l} (- \nabla_k
\nabla_l \phi + \phi g^{p s} h_{k s} h_{l p} + \phi g_{k l}) \\
= & n \phi + g_{i j} \tilde{h}^{i k} \tilde{h}^{j l} \nabla_k \nabla_l
\phi - \phi g_{i j} \tilde{h}^{i k} \tilde{h}^{j l} g_{k l} \\
= & \tfrac{n}{H} - g_{i j} \tilde{h}^{i k} \tilde{h}^{j l} \tfrac{1}{H^2}
\nabla_k \nabla_l H + 2 g_{i j} \tilde{h}^{i k} \tilde{h}^{j l}
\tfrac{1}{H^3} \nabla_k H \nabla_l H - g_{i j} \tilde{h}^{i k}
\tilde{h}^{j l} g_{k l} \tfrac{1}{H} .
\end{align}
  We compute now $\Delta \tilde{H}$ and we have
\begin{align}
& \Delta \tilde{H} \\
= & g_{i j} \Delta \tilde{h}^{i j} \\
= & - g_{i j} \nabla (\tilde{h}^{i k} \tilde{h}^{j l} \nabla h_{k l})
\\
= & - g_{i j} \tilde{h}^{i k} \tilde{h}^{j l} \Delta h_{k l} - g_{i j}
\tilde{h}^{i k} \nabla \tilde{h}^{j l} \cdot \nabla h_{k l} - g_{i j}
\tilde{h}^{j l} \nabla \tilde{h}^{i k} \cdot \nabla h_{k l} .
\end{align}
  We give a quick derivation of the Simons identity:
\begin{align}
& \Delta h_{i j} \\
= & \nabla_k \nabla_k h_{i j} \\
= & \nabla_k \nabla_i h_{k j} \\
= & \nabla_i \nabla_k h_{k j} - R_{k i k}^{\phantom{i j k} l} h_{l j} -
R_{k i j}^{\phantom{i j k} l} h_{k l} \\
= & \nabla_i \nabla_j h_{k k} - (\bar{R}_{k i k}^{\phantom{i j k} l} +
h_{i k} h^l_k - h_{k k} h_i^l) h_{l j} \\
& - (\bar{R}_{k i j}^{\phantom{i j k} l} + h_{i j} h_k^l - h_{k j} h_i^l)
h_{k l} \\
= & \nabla_i \nabla_j H - (\bar{g}_{k k} \bar{g}_i^l - \bar{g}_k^l
\bar{g}_{i k} + h_{i k} h^l_k - h_{k k} h_i^l) h_{j l} \\
& - (\bar{g}_{k j} \bar{g}_i^l - \bar{g}_k^l \bar{g}_{i j} + h_{i j}
h_k^l - h_{k j} h_i^l) h_{k l} \\
= & \nabla_i \nabla_j H - n h_{i j} + H h_i^l h_{j l} + H g_{i j} - h_{i
j} |A|^2 \label{simons} .
\end{align}
  So
\begin{align}
& \Delta \tilde{H} \\
= & - g_{i j} \tilde{h}^{i k} \tilde{h}^{j l} (\nabla_k \nabla_l H - n
h_{k l} + H h_k^s h_{l s} + H g_{k l} - h_{k l} |A|^2) \\
& - g_{i j} \tilde{h}^{i k} \nabla \tilde{h}^{j l} \cdot \nabla h_{k l} -
g_{i j} \tilde{h}^{j l} \nabla \tilde{h}^{i k} \cdot \nabla h_{k l}
\\
= & - g_{i j} \tilde{h}^{i k} \tilde{h}^{j l} \nabla_k \nabla_l H + n
\tilde{H} - n H - H g_{i j} g_{k l} \tilde{h}^{i k} \tilde{h}^{j l} +
\tilde{H} |A|^2 \\
& - g_{i j} \tilde{h}^{i k} \nabla \tilde{h}^{j l} \cdot \nabla h_{k l} -
g_{i j} \tilde{h}^{j l} \nabla \tilde{h}^{i k} \cdot \nabla h_{k l} .
\end{align}
  Now
\begin{align}
& (\partial_t - \tfrac{1}{H^2} \Delta) \log \tilde{H} \\
= & \tfrac{1}{\tilde{H}} \partial_t \tilde{H} - \tfrac{1}{H^2 \tilde{H}}
\Delta \tilde{H} + \tfrac{1}{H^2 \tilde{H}^2} | \nabla \tilde{H} |^2
\\
= & \tfrac{2 n}{\tilde{H} H} + 2 g_{i j} \tilde{h}^{i k} \tilde{h}^{j l}
\tfrac{1}{H^3 \tilde{H}} \nabla_k H \nabla_l H \\
& - \tfrac{n}{H^2} - \tfrac{|A|^2}{H^2} + \tfrac{1}{\tilde{H} H^2} g_{i
j} (\tilde{h}^{i k} \nabla \tilde{h}^{j l} + \tilde{h}^{j l} \nabla
\tilde{h}^{i k}) \cdot \nabla h_{k l} \\
& + \tfrac{1}{H^2 \tilde{H}^2} | \nabla \tilde{H} |^2 .
\end{align}
  Considering that $\tilde{h}^{i j}$ is the inverse matrix of $h_{i j}$,
  $\nabla \tilde{h}^{i j} = - h^{i k} h^{j l} \nabla h_{k l}$ and symmetry of
  indices leads to \eqref{evolution harmonic mean curvature}.
\end{proof}

\begin{lemma}
  If $M_t$ is convex, then boundary derivatives $\nabla_{\eta} \tilde{H}$
  satisfies the estimate
  \begin{equation}
    \nabla_{\eta} \tilde{H} \leqslant n \coth \rho_0 \tilde{H} . \label{normal
    derivative tilde H}
  \end{equation}
\end{lemma}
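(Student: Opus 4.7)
The plan is to exploit the block-diagonal structure of $h_{ij}$ at a boundary point furnished by \eqref{second fundamental form cross term}, then differentiate the inverse identity $\tilde{h}^{ij} h_{jk} = \delta^i_k$ and substitute the boundary derivative formulas \eqref{normal derivative h ij} and \eqref{normal derivative h 11}.

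First I would fix $p \in \partial M$ and work in an orthonormal frame $\{\eta, e_1, \ldots, e_{n-1}\}$ at $p$ with the $e_\alpha$ tangent to $\partial M$. By \eqref{second fundamental form cross term}, $h_{\eta\alpha}=0$, so $h_{ij}$ splits into the scalar block $A(\eta,\eta)$ and the tangential block $h_{\alpha\beta}$; its inverse $\tilde{h}^{ij}$ splits the same way, with $\tilde{h}^{\eta\eta} = 1/A(\eta,\eta)$, $\tilde{h}^{\eta\alpha}=0$, and $\tilde{h}^{\alpha\beta}$ the inverse of the tangential block. Differentiating the inverse identity yields $\nabla_\eta \tilde{h}^{ij} = -\tilde{h}^{ik}\tilde{h}^{jl}\nabla_\eta h_{kl}$, and since $\nabla g = 0$,
\begin{equation}
  \nabla_\eta \tilde{H} = -g_{ij}\tilde{h}^{ik}\tilde{h}^{jl}\nabla_\eta h_{kl}.
\end{equation}

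Next I would split this sum along the two blocks. The pure-normal piece is $-(\tilde{h}^{\eta\eta})^2 \nabla_\eta h_{\eta\eta}$, which by \eqref{normal derivative h 11} and $\tilde{h}^{\eta\eta}A(\eta,\eta)=1$ equals $n\coth\rho_0 \, \tilde{h}^{\eta\eta}$. The tangential piece is $-\sum_{\alpha,\gamma,\delta}\tilde{h}^{\alpha\gamma}\tilde{h}^{\alpha\delta}\nabla_\eta h_{\gamma\delta}$; applying \eqref{normal derivative h ij} and using that $\tilde{h}^{\alpha\beta}$ is the tangential inverse of $h_{\alpha\beta}$ (so the product $\tilde{h}^2 h$ on $T\partial M$ equals $\tilde{h}|_{T\partial M}$) collapses it to $\coth\rho_0\bigl[\operatorname{tr}_{\partial M}\tilde{h} - A(\eta,\eta)\operatorname{tr}_{\partial M}(\tilde{h}^2)\bigr]$, where $\operatorname{tr}_{\partial M}$ denotes trace over $T\partial M$.

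Assembling the two pieces and subtracting from $n\coth\rho_0 \tilde{H} = n\coth\rho_0(\tilde{h}^{\eta\eta} + \operatorname{tr}_{\partial M}\tilde{h})$, the claim reduces to
\begin{equation}
  (n-1)\coth\rho_0 \operatorname{tr}_{\partial M}\tilde{h} + \coth\rho_0 A(\eta,\eta)\operatorname{tr}_{\partial M}(\tilde{h}^2) \geqslant 0,
\end{equation}
which is immediate from convexity: the tangential block of $h$ is positive definite, hence $\tilde{h}|_{T\partial M}$ and its square have nonnegative trace, and $A(\eta,\eta)\geqslant 0$. The computation is essentially bookkeeping; the only point to watch is that the cross-block terms involving $\nabla_\eta h_{\eta\alpha}$ are annihilated by the block-diagonal $\tilde{h}$ and therefore never need to be estimated, which is exactly why the argument closes using only Lemmas \ref{lemma normal derivative h ij} and \ref{lemma normal derivative h 11}.
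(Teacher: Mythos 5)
Your proposal is correct and follows essentially the same route as the paper: fix a frame adapted to $\eta$ at a boundary point, use the vanishing cross term \eqref{second fundamental form cross term} to block-diagonalize $h_{ij}$ and $\tilde{h}^{ij}$, substitute \eqref{normal derivative h 11} and \eqref{normal derivative h ij} into $\nabla_\eta\tilde H=-g_{ij}\tilde h^{ik}\tilde h^{jl}\nabla_\eta h_{kl}$, and conclude by convexity. The only (cosmetic) difference is that you verify the final inequality by showing $n\coth\rho_0\,\tilde H-\nabla_\eta\tilde H=(n-1)\coth\rho_0\operatorname{tr}_{\partial M}\tilde h+\coth\rho_0\,A(\eta,\eta)\operatorname{tr}_{\partial M}(\tilde h^2)\geqslant 0$ directly, whereas the paper diagonalizes and bounds in two steps.
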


\begin{proof}
  We fix coordinates on $M_t$ such that $\partial_1 = \eta$, so \eqref{normal
  derivative h 11} and \eqref{normal derivative h ij} implies that
\begin{align}
& \nabla_{\eta} \tilde{H} \\
= & - \eta^k g_{i j} \tilde{h}^{i l} \tilde{h}^{j s} \nabla_k h_{l s}
\\
= & - g_{i j} \tilde{h}^{1 i} \tilde{h}^{1 j} \nabla_1 h_{1 1} - \sum_{l,
s \neq 1} g_{i j} \tilde{h}^{i l} \tilde{h}^{j s} \nabla_1 h_{l s}
\\
= & n \coth \rho_0 g_{i j} \tilde{h}^{1 i} \tilde{h}^{1 j} h_{1 1} -
\sum_{l, s \neq 1} \coth \rho_0 g_{i j} \tilde{h}^{i l} \tilde{h}^{j s}
(h_{11} g_{l s} - h_{l s}) .
\end{align}
  Since the cross term \eqref{second fundamental form cross term}, so
  $\tilde{h}^{1 i} = 0$ for $i \neq 1$ and we can assume that $g_{i j}$ for
  $i, j \geqslant 2$ is unit matrix, then $h_{i j}$ is diagonal matrix with
  diagonal entries $(\kappa_1, \ldots, \kappa_n) .$ So $\tilde{h}^{i j}$ is
  diagonal with diagonal entries $(\kappa_1^{- 1}, \ldots, \kappa_n^{- 1})$.
  We see then
\begin{align}
\nabla_{\eta} \tilde{H} & = \coth \rho_0 (\tfrac{n}{\kappa_1} - \kappa_1
\sum_{i \neq 1} \kappa_i^{- 2} + \sum_{i \neq 1} \kappa_i^{- 1})
\\
& \leqslant \coth \rho_0 (\tfrac{n - 1}{\kappa_1} + \tilde{H}) .
\end{align}
  Since $\tilde{H} = \sum_i \kappa_i^{- 1}$ obviously, so $\nabla_{\eta}
  \tilde{H} \leqslant n \coth \rho_0 \tilde{H}$.
\end{proof}

\subsection{Hyperboloidal coordinates}

The functions $z^0$ and $z^i$ are used in later sections to construct
auxiliary functions.

\begin{lemma}
  The functions $z^0$ and $z^i$ evolves by
  \begin{equation}
    \partial_t z^i - \tfrac{1}{H^2} \Delta z^i = - \tfrac{n}{H^2} z^i +
    \tfrac{2}{H} \nu^i . \label{evolution z}
  \end{equation}
\end{lemma}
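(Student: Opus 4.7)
The plan is to combine the IMCF velocity equation with the intrinsic Laplacian formula \eqref{laplacian z} applied componentwise in Minkowski coordinates. Since \eqref{imcf} prescribes $\partial_t F = \tfrac{1}{H}\nu$ pointwise on $M_t$, and $z$ is nothing but $F$ viewed as a map into $\mathbb{R}^{n+1,1}$, reading off the $i$-th Minkowski component immediately gives
\[
\partial_t z^i = \tfrac{1}{H}\nu^i
\]
on all of $M_t$. This handles the time derivative with no further work.

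For the spatial part I would simply invoke \eqref{laplacian z}: tracing the Weingarten relation \eqref{hessian z} componentwise produces $\Delta z^\alpha = nz^\alpha - H\nu^\alpha$ for every $\alpha = 0,1,\ldots,n+1$, so specializing to $\alpha = i$ yields $\Delta z^i = nz^i - H\nu^i$. Substituting both ingredients,
\[
\partial_t z^i - \tfrac{1}{H^2}\Delta z^i = \tfrac{\nu^i}{H} - \tfrac{1}{H^2}\bigl(nz^i - H\nu^i\bigr) = -\tfrac{n}{H^2}z^i + \tfrac{2}{H}\nu^i,
\]
which is exactly \eqref{evolution z}. The identical argument with $\alpha = 0$ gives the analogous equation for $z^0$ mentioned in the statement.

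There is essentially no obstacle here; the only point worth flagging is that the flow \eqref{imcf} is prescribed purely in the normal direction, so no tangential reparametrization term appears when differentiating $z^i$ along the flow. The free boundary condition $\langle \nu, \eta \rangle = 0$ plays no role in this interior identity and will only intervene later, when \eqref{evolution z} is combined with a boundary derivative computation to set up an auxiliary parabolic problem for the estimates in Section \ref{important estimates}.
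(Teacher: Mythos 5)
Your proof is correct and follows exactly the same route as the paper: read off $\partial_t z^i = \tfrac{1}{H}\nu^i$ directly from \eqref{imcf} and combine it with the traced Weingarten relation \eqref{laplacian z}. The extra remark about the absence of tangential terms is sound but not needed, since the paper's normal parametrization already makes this automatic.
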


\begin{proof}
  First, by the equation of the inverse mean curvature flow \eqref{imcf},
  \[ \partial_t z^i = \tfrac{1}{H} \nu^i . \]
  Combining with \eqref{laplacian z}, we obtain \eqref{evolution z}.
\end{proof}

We are also interested in the evolution of $f (z)$.

\begin{lemma}
  Evolution of $f (z)$ is given by
  \begin{equation}
    (\partial_t - \tfrac{1}{H^2} \Delta) f = - \tfrac{n}{H^2} \tfrac{\partial
    f}{\partial z^{\alpha}} z^{\alpha} + \tfrac{2}{H} \tfrac{\partial
    f}{\partial z^{\alpha}} \nu^{\alpha} - \tfrac{\partial^2 f}{H^2 \partial
    z^{\alpha} \partial z^{\beta}} \langle \nabla z^{\alpha}, \nabla z^{\beta}
    \rangle . \label{evolution f z}
  \end{equation}
  In particular, when $f = \log z^{\alpha}$
  \[ (\partial_t - \tfrac{1}{H^2} \Delta) \log z^{\alpha} = - \tfrac{n}{H^2} +
     \tfrac{2}{H z^{\alpha}} \nu^{\alpha} + \tfrac{1}{H^2 (z^{\alpha})^2} |
     \nabla z^{\alpha} |^2 . \]
\end{lemma}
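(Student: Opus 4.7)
The plan is to treat $f(z(t,\cdot))$ as a composition and apply the chain rule to both the time derivative and the surface Laplacian. Since the only $t$- and spatial dependence of $f$ enters through the embedding coordinates $z^{\alpha}$, the entire identity reduces to substituting the evolution equation and the Laplacian of $z^{\alpha}$ into the chain rule and collecting terms.

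First, I would use the flow equation \eqref{imcf} to compute
\[
\partial_t f \;=\; \tfrac{\partial f}{\partial z^{\alpha}}\,\partial_t z^{\alpha} \;=\; \tfrac{1}{H}\,\tfrac{\partial f}{\partial z^{\alpha}}\,\nu^{\alpha}.
\]
Next, I would apply the standard Laplace--Beltrami chain rule
\[
\Delta f \;=\; \tfrac{\partial f}{\partial z^{\alpha}}\,\Delta z^{\alpha} \;+\; \tfrac{\partial^2 f}{\partial z^{\alpha}\partial z^{\beta}}\,\langle \nabla z^{\alpha},\nabla z^{\beta}\rangle,
\]
and substitute the Weingarten trace identity \eqref{laplacian z}, namely $\Delta z^{\alpha} = n\,z^{\alpha} - H\,\nu^{\alpha}$, which holds for every index $\alpha\in\{0,1,\dots,n+1\}$ (so that although \eqref{evolution z} was only stated for $z^{i}$, the derivation covers $z^0$ without modification). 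Dividing by $H^2$ and subtracting from $\partial_t f$, the contribution $\tfrac{1}{H}\tfrac{\partial f}{\partial z^{\alpha}}\nu^{\alpha}$ from the time derivative combines with the $+\tfrac{1}{H}\tfrac{\partial f}{\partial z^{\alpha}}\nu^{\alpha}$ produced by $-\tfrac{1}{H^2}(-H\nu^{\alpha})$ to give the stated factor of $\tfrac{2}{H}$; the term $n z^{\alpha}$ yields the $-\tfrac{n}{H^2}$ contribution; and the Hessian term appears with the correct sign. This is precisely \eqref{evolution f z}.

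The specialization to $f=\log z^{\alpha}$ (no summation over $\alpha$) is then an immediate substitution: $\partial_{\beta} f = \delta^{\alpha}_{\beta}/z^{\alpha}$ and $\partial_{\beta}\partial_{\gamma} f = -\delta^{\alpha}_{\beta}\delta^{\alpha}_{\gamma}/(z^{\alpha})^2$, and plugging these into \eqref{evolution f z} gives the displayed formula directly, with the minus sign from the second derivative of $\log$ cancelling the minus in front of the Hessian term to produce $+|\nabla z^{\alpha}|^2/(H^2(z^{\alpha})^2)$.

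There is no serious obstacle here: the proof is essentially two applications of the chain rule combined with the identity \eqref{laplacian z}. The only point that demands care is sign-tracking, especially in the interaction between $-\tfrac{1}{H^2}\Delta z^{\alpha}$ and $\partial_t z^{\alpha}$, and in the specialization to $\log z^{\alpha}$ where the concavity of the logarithm reverses the sign of the second-order contribution.
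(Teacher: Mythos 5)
Your proposal is correct and follows essentially the same route as the paper: a chain rule for $\partial_t$ and for the Laplace--Beltrami operator, combined with $\Delta z^{\alpha}=nz^{\alpha}-H\nu^{\alpha}$; the paper merely substitutes the pre-packaged heat-operator form \eqref{evolution z} of $\partial_t z^{\alpha}$ rather than the raw flow equation, which is an equivalent bookkeeping choice. The sign checks in your specialization to $\log z^{\alpha}$ are also correct.
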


\begin{proof}
  First, by \eqref{evolution z},
  \[ \partial_t f (z) = \tfrac{\partial f}{\partial z^{\alpha}} \partial_t
     z^{\alpha} = \tfrac{\partial f}{\partial z^{\alpha}} (\tfrac{1}{H^2}
     \Delta z^{\alpha} - \tfrac{n}{H^2} z^{\alpha} + \tfrac{2}{H}
     \nu^{\alpha}) . \]
  By Leibniz rule,
\begin{align}
& \Delta f (z) \\
= & \nabla^i (\tfrac{\partial f}{\partial z^{\alpha}} \nabla_i z^{\alpha})
\\
= & \tfrac{\partial f}{\partial z^{\alpha}} \Delta z^{\alpha} +
\tfrac{\partial^2 f}{\partial z^{\alpha} \partial z^{\beta}} \langle
\nabla z^{\alpha}, \nabla z^{\beta} \rangle .
\end{align}
  Subtraction the above from $\partial_t f (z)$ gives \eqref{evolution f z}.
\end{proof}

\begin{lemma}
  The boundary derivatives of $\nabla_{\eta} z^i$ and $\nabla_{\eta} z^i$ are
  given by
  \begin{equation}
    \nabla_{\eta} \log z^0 = \tanh \rho_0, \nabla_{\eta} z^i = \coth \rho_0
    z^i . \label{normal derivative z}
  \end{equation}
\end{lemma}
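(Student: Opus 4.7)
The plan is to compute both derivatives by first identifying $\eta$ explicitly on $\partial M$ and then reducing everything to Minkowski inner products in $\mathbb{R}^{n+1,1}$.

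First, I would use the free boundary condition. Because $\langle \nu, \eta_{\partial B_0}\rangle=0$ on $\partial M$, the hypersurface $M$ meets $\partial B_0$ orthogonally; hence the conormal $\eta\in TM$ to $\partial M$ coincides with the outward unit normal to $\partial B_0$ in $B_0$. Using $\cosh\rho = z^0$ and the formula $\nabla\rho=\tfrac{1}{\sinh\rho}(-e_0+z^0 z)$ from the geometry section, together with $z^0=\cosh\rho_0$ on $\partial M$, this yields
\[
\eta=\tfrac{1}{\sinh\rho_0}(-e_0+\cosh\rho_0\cdot z).
\]

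Second, I would identify the hyperbolic gradients $Dz^0$ and $Dz^i$ as tangent vectors to $\mathbb{H}^{n+1}\subset\mathbb{R}^{n+1,1}$. Projecting the Minkowski vectors $e_0,e_i$ onto $T_z\mathbb{H}^{n+1}$ using the constraint $\langle z,z\rangle=-1$, one obtains
\[
Dz^0=-e_0+z^0 z,\qquad Dz^i=e_i+z^i z,
\]
both of which are tangent to $\mathbb{H}^{n+1}$ (easy check: $\langle Dz^\alpha,z\rangle=0$) and act correctly on tangent vectors ($\langle Dz^\alpha,v\rangle=v^\alpha$).

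Third, the two desired identities follow from computing $\nabla_\eta z^\alpha=\langle\eta,Dz^\alpha\rangle$ in Minkowski signature. For $z^0$, the computation gives $|Dz^0|^2=(z^0)^2-1=\sinh^2\rho_0$ on $\partial M$, so $\nabla_\eta z^0=\sinh\rho_0$ and therefore $\nabla_\eta\log z^0=\tanh\rho_0$. For $z^i$, the cross terms simplify using $\langle e_0,z\rangle=-z^0$ and $\langle z,e_i\rangle=z^i$, leaving $\nabla_\eta z^i=\coth\rho_0\cdot z^i$. There is no real obstacle here; the only point requiring care is tracking signs under the Minkowski pairing and enforcing $z^0=\cosh\rho_0$ only at the end, on $\partial M$ itself, rather than interior to $M$.
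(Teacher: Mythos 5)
Your proof is correct and follows essentially the same route as the paper: identify $\eta$ with $\nabla\rho=\tfrac{1}{\sinh\rho}(-e_0+z^0 z)$ via the free boundary condition, compute the hyperbolic gradients $Dz^0=-e_0+z^0z$ and $Dz^i=e_i+z^iz$ by tangential projection, and evaluate the Minkowski pairings on $\partial M$ where $z^0=\cosh\rho_0$. The paper's proof is terser but performs exactly this computation.
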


\begin{proof}
  We use the hyperboloidal coordinates. The normal $N$ to the geodesic sphere
  $\partial B (\rho_0)$ is just $\nabla \rho$, where $\rho$ is the distance
  function to $e_0$. By \eqref{distance to origin},
  \[ \eta = \nabla \rho = \tfrac{1}{\sinh \rho} \nabla z^0 = \tfrac{1}{\sinh
     \rho} (- e_0 + z^0 z) . \]
  It readily leads to
  \[ \nabla_{\eta} \log z^0 = \tfrac{\nabla_{\eta} z^0}{z^0} = \tfrac{(z^0)^2
     - 1}{\sinh \rho \cosh \rho} = \tanh \rho, \]
  and
  \[ \nabla_{\eta} z^i = \tfrac{1}{\sinh \rho} z^0 z^i = \coth \rho  z^i . \]
  We can also calculate the normal derivatives $\nabla_{\eta} z$ by using the
  Poincar{\'e} ball model. The calculation is slightly longer.
\end{proof}

\section{Geometry from convexity}\label{geometry from convexity}

First, we show that $\partial M$ is a convex hypersurface in $\partial B_0$.

\begin{lemma}
  Let $M$ be a convex free boundary hypersurface in $B_0$, then $\partial M$
  is a closed convex hypersurface in $\partial B_0$.
\end{lemma}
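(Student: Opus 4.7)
The plan is to identify, along $\partial M$, a natural unit normal to $\partial M$ inside $\partial B_0$ and to show that the associated second fundamental form is simply the restriction of $A$ to $T\partial M$; convexity of $\partial M$ in $\partial B_0$ then follows immediately from convexity of $M$.

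First I would observe that the free boundary condition $\langle \nu, \eta \rangle = 0$ says that $\nu$ is tangent to $\partial B_0$ at every point of $\partial M$. Since $\nu$ is already normal to $M$ in $\mathbb{H}^{n+1}$, it is in particular orthogonal to $T \partial M \subset TM$. Hence along $\partial M$ the vector $\nu$ is a unit normal to $\partial M$ as a hypersurface of $\partial B_0$.

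Next, for tangent vector fields $X, Y$ to $\partial M$, I would use the Gauss formula for the totally umbilical hypersurface $\partial B_0 \subset \mathbb{H}^{n+1}$, which has Weingarten operator $\coth \rho_0$ times the identity (as already invoked in the proof of Lemma~\ref{lemma normal derivative h ij}):
\[ D_X Y = \nabla^{\partial B_0}_X Y + \coth \rho_0 \langle X, Y \rangle \eta. \]
Taking the inner product with $\nu$ and using $\langle \nu, \eta \rangle = 0$ yields
\[ \langle \nabla^{\partial B_0}_X Y, \nu \rangle = \langle D_X Y, \nu \rangle = A(X, Y), \]
so the second fundamental form of $\partial M$ in $\partial B_0$ with respect to $\nu$ is exactly $A|_{T \partial M}$. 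Convexity of $M$ makes this positive (or nonnegative) definite, giving convexity of $\partial M$ in $\partial B_0$.

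Closedness is automatic: since $F$ parametrizes $M$ as an immersion of a disk $\mathbb{D}$ with $F(\partial \mathbb{D}) \subset \partial B_0$, the boundary $\partial M$ is the image of the compact sphere $\partial \mathbb{D} \cong S^{n-1}$ under a smooth immersion, hence a compact hypersurface without boundary. I do not expect any genuine obstacle here; the only care needed is in tracking the sign convention for $\nu$ and the orientation of $\partial M$ inside $\partial B_0$, but these do not affect the definiteness conclusion.
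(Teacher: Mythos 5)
Your proof is correct and is exactly the argument the paper compresses into one sentence: the free boundary condition makes $\nu$ a unit normal to $\partial M$ inside $\partial B_0$, and since $\partial B_0$ is totally umbilical the second fundamental form of $\partial M$ in $\partial B_0$ is the restriction of $A$ to $T\partial M$, hence positive definite. (Only a sign-convention check against the paper's $h_{ij}=-\langle z_{ij},\nu\rangle$ is needed, which you already flag and which does not affect definiteness.)
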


\begin{proof}
  The convexity of $\partial M$ in $\partial B_0$ readily follows from the
  free boundary condition and that the normal of $M$ in $B_0$ is also the
  normal of $\partial M$ in $\partial B_0$.
\end{proof}

The following lemma states that $\langle z, \tilde{y} \rangle$ is geometric
and does not depend on the choice of embedding of $\mathbb{H}^{n + 1}$ into
$\mathbb{R}^{n + 1, 1}$. The quantity $\langle z, \tilde{y} \rangle$ is from
the work {\cite{gerhardt-minkowski-2006}} and closely related to convexity.

\begin{lemma}
  \label{distance to a hypersurface}Let $y$ a point in $\mathbb{H}^{n + 1}$,
  $\tilde{y}$ be a vector in the tangent space $T_y \mathbb{H}^{n + 1}$ and
  $P$ be the subspace $\mathbb{H}^{n + 1} \cap \{x : \langle x, \tilde{y}
  \rangle = 0\}$. Let $d$ be the distance from a point $z' \in \mathbb{H}^{n +
  1}$ to $P$. By assigning $d$ to be positive if $z'$ lies in the same side
  with which $\tilde{y}$ points into and negative if $z'$ lies in the opposite
  side, we can view $d$ as a signed distance, and moreover
  \[ \sinh d = \langle z', \tilde{y} \rangle . \]
\end{lemma}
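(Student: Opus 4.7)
The plan is to work directly in the Minkowski ambient $\mathbb{R}^{n+1,1}$ and exploit the fact that $P$ is cut out of $\mathbb{H}^{n+1}$ by a single linear equation, so it is a totally geodesic hyperplane; and, taking $\tilde{y}$ to be the unit spacelike vector that naturally arises as a point of the de Sitter sphere (i.e.\ a Gauss image), $\tilde{y}$ serves as a Minkowski unit normal to $P$ at each of its points, since for any $p\in P$ one has $\langle p,\tilde{y}\rangle=0$ (so $\tilde{y}\in T_{p}\mathbb{H}^{n+1}$) and $T_{p}P = T_{p}\mathbb{H}^{n+1}\cap \tilde{y}^{\perp}$.

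The first step is to perform an orthogonal decomposition of $z'$ against $\tilde{y}$. Setting $\beta := \langle z', \tilde{y}\rangle$, the vector $z'-\beta\tilde{y}$ lies in $\tilde{y}^{\perp}$, and using $\langle z',z'\rangle = -1$, $\langle \tilde{y},\tilde{y}\rangle = 1$ one computes $\langle z'-\beta\tilde{y},\,z'-\beta\tilde{y}\rangle = -1-\beta^{2}$. Putting $\alpha := \sqrt{1+\beta^{2}}$ and $p := (z'-\beta\tilde{y})/\alpha$, with the branch chosen so that $p^{0}>0$, produces a point $p\in P$. Defining $d$ by $\cosh d=\alpha$ and $\sinh d=\beta$ then yields the clean decomposition
\[
z' \;=\; \cosh(d)\, p \;+\; \sinh(d)\, \tilde{y}.
\]

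The standard parametrization of hyperbolic geodesics in the hyperboloid model shows that $\gamma(s):=\cosh(s)p+\sinh(s)\tilde{y}$ is the unit-speed geodesic from $p$ with initial velocity $\tilde{y}$, so $z'=\gamma(d)$ and the hyperbolic distance from $z'$ to $p$ equals $|d|$. To finish I would verify (i) that $p$ really is the foot of the perpendicular from $z'$ to $P$, which is immediate once one observes that $\gamma'(0)=\tilde{y}$ is Minkowski-orthogonal to $T_{p}P$, and (ii) that the sign of $d$ matches the stated convention, which reduces to the statement that $\beta>0$ iff $z'$ lies on the side into which $\tilde{y}$ points. The desired identity $\sinh d = \langle z',\tilde{y}\rangle$ then falls out of the definition of $\beta$. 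The only mildly delicate points are the branch choice in defining $p$ (needing $p^{0}>0$) and the sign bookkeeping for orientation; everything else is linear algebra with the Minkowski inner product.
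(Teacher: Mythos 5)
Your argument is correct, and it takes a genuinely different route from the paper's. The paper parametrizes $z'$ and the points of $P$ in polar form centered at $y$ (writing $z=y\cosh\rho+\xi\sinh\rho$ with $\xi\perp y,\tilde y$), and then explicitly maximizes $\langle z,z'\rangle$ over $P$ by differentiating in $\rho$ and applying Cauchy--Schwarz to $\langle\xi,\xi'\rangle$; the identity emerges at the end as $\cosh d=\sqrt{1+\langle z',\tilde y\rangle^2}$, which only pins down $|\sinh d|$ and leaves the sign to the reader. Your orthogonal decomposition $z'=\cosh(d)\,p+\sinh(d)\,\tilde y$ with $p=(z'-\beta\tilde y)/\sqrt{1+\beta^2}\in P$ is cleaner: it produces the foot of the perpendicular in closed form, makes the identity $\sinh d=\langle z',\tilde y\rangle$ a one-line consequence of $\langle p,\tilde y\rangle=0$ and $\langle\tilde y,\tilde y\rangle=1$, and handles the sign convention transparently since the two sides of $P$ are exactly the sets $\{\langle\cdot,\tilde y\rangle\gtrless 0\}$. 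Two small remarks. First, your ``branch choice'' $p^0>0$ is in fact automatic: $z'-\beta\tilde y$ is timelike with $\langle z',z'-\beta\tilde y\rangle=-1-\beta^2<0$, so it lies in the same time cone as $z'$. Second, where you appeal to the standard fact that a geodesic meeting a totally geodesic hypersurface orthogonally realizes the distance, you could instead close the argument in one line internal to your setup: for any $q\in P$, $-\langle z',q\rangle=-\cosh(d)\langle p,q\rangle\geqslant\cosh(d)$ since $-\langle p,q\rangle=\cosh\operatorname{dist}(p,q)\geqslant 1$, so $p$ is the closest point of $P$ to $z'$. With that observation included, your proof is complete and arguably tidier than the one in the paper.
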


\begin{proof}
  The vectors $y$ and $\tilde{y}$ are orthogonal unit vectors in
  $\mathbb{R}^{n + 1, 1}$, we extend $\{y, \tilde{y} \}$ to an orthonormal
  basis of $\mathbb{R}^{n + 1, 1}$. Any point $z' \in \mathbb{H}^{n + 1}$ can
  then be written as
  \[ z' = y \cosh \rho' + \xi' \sinh \rho', \rho' \geqslant 0, \]
  where $\xi'$ is a unit vector orthogonal to $y$ and $\rho'$ is the distance
  in $\mathbb{H}^{n + 1}$ from $y$ to $z'$. Any point $z$ in $P$ can be
  written as $z = y \cosh \rho + \xi \sinh \rho$ where $\rho$ is the distance
  in $\mathbb{H}^{n + 1}$ from $y$ to $z$, and $\xi$ is a unit vector in
  $\mathbb{R}^{n + 1, 1}$ with $\langle \xi, \tilde{y} \rangle = \langle \xi,
  y \rangle = 0$. We see from \eqref{distance} we just have to find out the
  maximum of $\langle z, z' \rangle$. Since
\begin{align}
& - \cosh \ensuremath{\operatorname{dist}} (z, z') = \langle z, z'
\rangle \\
= & - \cosh \rho \cosh \rho' + \langle \xi, \xi' \rangle \sinh \rho' \sinh
\rho \\
\leqslant & - \cosh \rho \cosh \rho' + \sinh \rho' \sinh \rho \sqrt{1 -
\langle \xi', \tilde{y} \rangle^2} = F (\rho) .
\end{align}
  So $F (\rho)$ achieve its maximum when
  \[ - \sinh \rho \cosh \rho' + \sinh \rho' \cosh \rho \sqrt{1 - \langle \xi',
     \tilde{y} \rangle^2} = 0. \]
  This gives
  \[ \tanh^2 \rho = \tanh^2 \rho' (1 - \langle \xi', \tilde{y} \rangle^2) . \]
  Now we calculate the maximum of $F (\rho)$ using the condition above,
\begin{align}
& F (\rho) \\
= & - \cosh \rho' (\cosh \rho - \sinh \rho \tanh \rho' \sqrt{1 - \langle
\xi', \tilde{y} \rangle^2}) \\
= & - \cosh \rho' (\cosh \rho - \sinh \rho \tanh \rho) \\
= & - \tfrac{\cosh \rho'}{\cosh \rho} \\
= & - \cosh \rho' \sqrt{1 - \tanh^2 \rho} \\
= & - \cosh \rho' \sqrt{1 - \tanh^2 \rho' + \tanh^2 \rho' \langle \xi',
\tilde{y} \rangle^2} \\
= & - \sqrt{\cosh^2 \rho' - \sinh^2 \rho' + \sinh^2 \rho' \langle \xi',
\tilde{y} \rangle^2} \\
= & - \sqrt{1 + \sinh^2 \rho' \langle \xi', \tilde{y} \rangle} \\
= & - \cosh d, \\
= & - \sqrt{1 + \langle z', \tilde{y} \rangle^2} .
\end{align}
  where $d$ is the distance from $z'$ to the hyperbolic subspace determined by
  $\tilde{y}$. Note that $\langle z', \tilde{y} \rangle = \langle \tilde{y},
  \xi' \rangle \sinh \rho'$.
\end{proof}

The corollary below immediately follows from \eqref{hessian z} and convexity
of $M$.

\begin{corollary}
  \label{local separation}For each point $z$ of $M$, there exists a small
  neighborhood $U_z$ in $\mathbb{H}^{n + 1}$ of $z$ such that all points in
  $(M \cap U_z) \backslash \{z\}$ lie strictly on the opposite side of
  $\tilde{z}$ in the hyperbolic subspace orthogonal to $\tilde{z}$.
\end{corollary}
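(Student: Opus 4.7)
The plan is to fix a point $z_0 \in M$ with unit normal $\tilde{z}_0 \in \mathrm{d}\mathbb{S}^{n,1}$ and analyze the scalar function
$$\phi(\xi) = \langle z(\xi), \tilde{z}_0 \rangle$$
on a neighborhood of $\xi_0$ in $\mathbb{D}$, where $z(\xi_0) = z_0$. By Lemma \ref{distance to a hypersurface} applied with $y = z_0$, $\tilde{y} = \tilde{z}_0$, $\phi(\xi)$ is exactly $\sinh d$, where $d$ is the signed distance from $z(\xi)$ to the totally geodesic $n$-dimensional subspace $P = \mathbb{H}^{n+1} \cap \{x : \langle x, \tilde{z}_0\rangle = 0\}$. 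The sign convention says that $d > 0$ on the side into which $\tilde{z}_0$ points. Hence the corollary reduces to showing $\phi(\xi) < 0$ on a punctured neighborhood of $\xi_0$.

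The first computation is that $\phi$ has a critical zero at $\xi_0$. Indeed, from $\langle z, \nu \rangle = 0$ on $M$ one gets $\phi(\xi_0) = 0$, and from the tangency $\langle z_i, \nu \rangle = 0$ one gets $\partial_i \phi(\xi_0) = \langle z_i, \tilde{z}_0\rangle = 0$. Since the gradient vanishes at $\xi_0$, the covariant and coordinate Hessians agree there, and the Weingarten formula \eqref{hessian z} yields
$$\partial_i \partial_j \phi(\xi_0) = \langle z_{ij}, \tilde{z}_0 \rangle = \langle g_{ij} z_0 - h_{ij}(\xi_0)\, \tilde{z}_0,\; \tilde{z}_0 \rangle = -h_{ij}(\xi_0),$$
using $\langle z_0, \tilde{z}_0\rangle = 0$ and $\langle \tilde{z}_0, \tilde{z}_0\rangle = +1$ (the Gauss image is spacelike unit on the de Sitter sphere).

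The final step is a Taylor expansion around $\xi_0$. By strict convexity the matrix $h_{ij}(\xi_0)$ is positive definite, so $\phi$ has a strict local maximum at $\xi_0$ with value $0$, and therefore $\phi(\xi) < 0$ for $\xi$ close to but not equal to $\xi_0$. Translating back through Lemma \ref{distance to a hypersurface}, the signed distance from $z(\xi)$ to $P$ is strictly negative, which is precisely the statement that the nearby points of $M$ lie strictly on the opposite side of $\tilde{z}_0$. I do not anticipate a serious obstacle here; the only things to watch are the sign convention for $d$ and the fact that the Lorentzian pairing gives $\langle \nu, \nu\rangle = +1$ on the Gauss image, which is what flips the ambient Hessian into exactly $-h_{ij}$ rather than $+h_{ij}$.
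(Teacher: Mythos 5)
Your proof is correct and is exactly the argument the paper intends: the paper gives no details, saying only that the corollary ``immediately follows from \eqref{hessian z} and convexity of $M$,'' and your computation that $\phi(\xi)=\langle z(\xi),\tilde z_0\rangle$ has a critical zero at $\xi_0$ with Hessian $-h_{ij}(\xi_0)$ (using $\langle\tilde z_0,\tilde z_0\rangle=1$) is precisely the fleshed-out version of that one-liner, with the correct sign bookkeeping via Lemma \ref{distance to a hypersurface}.
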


Note that it allows more choices of $\tilde{z}$ if $z$ is a boundary point of
$M$. We show in the following that through a point in the free boundary
$\partial M$ there is a 2-subspace $P$ such that points in a tiny neighborhood
of $z$ in $M \cap P$ lies on one side of a geodesic in $P$.

\begin{lemma}
  \label{2D local separation}Let $z \in \partial M$, $U_z$ be a neighborhood
  as in Corollary \ref{local separation} and $P$ be a 2-subspace such that $P
  \cap M$ contains $z$ and at least one more point from $U_z \cap M$. Then
  there exists a geodesic line $L$ such that $(P \cap M \cap U_z) \backslash
  \{z\}$ lie on one side of $L$.
\end{lemma}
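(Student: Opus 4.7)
The goal is to exhibit $L$ as the intersection of $P$ with a totally geodesic hyperplane $H_{\tilde z}=\{w\in\mathbb{H}^{n+1}:\langle w,\tilde z\rangle=0\}$ that supports $M$ near $z$. As the remark preceding the lemma flags, at a boundary point of $M$ I have two natural choices for $\tilde z$: the outward normal $\nu$ to $M$, whose supporting property is Corollary~\ref{local separation}, and the outward normal $\eta$ to $B_0$ at $z$, which supports $M$ because $M\subset B_0$ and $B_0$ is geodesically convex in $\mathbb{H}^{n+1}$ with totally geodesic tangent hyperplane $H_\eta$ at $z$. Shrinking $U_z$ if needed, both separations are in fact strict: $\langle w,\nu\rangle<0$ for $w\in(M\cap U_z)\setminus\{z\}$ is immediate, while $\langle w,\eta\rangle<0$ follows from $\partial B_0$ having second fundamental form $\coth\rho_0>0$, so $\partial B_0$ strictly bends away from its flat tangent hyperplane and $H_\eta\cap\overline{B_0}\cap U_z=\{z\}$.

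\textbf{Case analysis.} I split on how $T_zP$ sits in $T_z\mathbb{H}^{n+1}$. (A) If $T_zP\not\subset\nu^\perp$, then $H_\nu$ meets $P$ transversally and $L:=P\cap H_\nu$ is a geodesic line in $P$; the strict inequality in $\nu$ puts $(P\cap M\cap U_z)\setminus\{z\}$ on the $-\nu$ side of $L$. (B) If $T_zP\subset\nu^\perp$ but $T_zP\not\subset\eta^\perp$, take instead $L:=P\cap H_\eta$, which is again a geodesic line in $P$, and the strict $\eta$-inequality finishes the argument. (C) If $T_zP\subset\nu^\perp\cap\eta^\perp$, then $T_zP\subset T_z\partial M\subset T_zH_\eta$. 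Since $P$ and $H_\eta$ are both totally geodesic in $\mathbb{H}^{n+1}$, share the point $z$, and have compatible tangent spaces there, uniqueness of totally geodesic submanifolds with a prescribed tangent at a point forces $P\subset H_\eta$. But then $P\cap M\cap U_z\subset H_\eta\cap B_0\cap U_z=\{z\}$ by the preparatory step, contradicting the hypothesis that $P\cap M$ contains at least one more point from $U_z\cap M$. Hence (C) cannot occur.

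\textbf{Main subtlety.} Case (A) is essentially a restatement of Corollary~\ref{local separation}; the real content of the lemma lies in handling the tangential cases, where I exploit the extra supporting direction $\eta$ available only because $z\in\partial M$ and $M\subset B_0$. The one place where things could conceivably fail is the simultaneous tangency (C), and the key input that rescues the argument there is precisely the strict convexity of the geodesic sphere $\partial B_0$ in $\mathbb{H}^{n+1}$, which confines $H_\eta\cap\overline{B_0}$ to the single point $z$ in a neighborhood of $z$ and so prevents $P$ from hiding additional intersection points inside $H_\eta$.
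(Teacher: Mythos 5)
Your proof is correct, but it takes a more elaborate route than the paper and introduces an auxiliary supporting direction that is not actually needed. The paper's proof simply takes $\tilde z$ from Corollary~\ref{local separation} (essentially $\nu$), writes it in an adapted orthonormal frame with $e_0 = z$ and $e_1,e_2 \in T_zP$, and projects it to $v = \tilde z^1 e_1 + \tilde z^2 e_2 \in T_zP$. Because $w\in P$ has no components beyond $e_0,e_1,e_2$, one gets $\langle w, v\rangle = \langle w, \tilde z\rangle < 0$ for $w\in (P\cap M\cap U_z)\setminus\{z\}$, so $L := \{w\in P : \langle w,v\rangle = 0\}$ does the job; the hypothesis that $P\cap M$ meets $U_z$ in a second point automatically forces $v\neq 0$, since $v = 0$ would give $\langle w,\tilde z\rangle = 0$ on all of $P$. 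Your case (A) is this same argument recast as transversality of $P$ and $H_\nu$. The subtlety is that your cases (B) and (C) are both vacuous, not just (C): if $T_zP\subset\nu^\perp$, then since $P$ and $H_\nu$ are totally geodesic and share a tangent plane at $z$, one has $P\subset H_\nu$, whence $\langle w,\nu\rangle = 0$ on $P$, contradicting the strict inequality of Corollary~\ref{local separation} at the second intersection point. This is precisely the contradiction you run in case (C), only with $H_\nu$ in place of $H_\eta$. So the second supporting hyperplane $H_\eta$ from the strict convexity of $\partial B_0$, while correctly set up (the sign convention $\langle w,\eta\rangle<0$ for $w\in\overline{B_0}\setminus\{z\}$ and the rigidity of totally geodesic subspaces with prescribed tangent are both fine), is never forced upon you; the lemma is really a one-supporting-hyperplane statement, and the hypothesis already rules out the degenerate tangency $T_zP\subset T_zM$. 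Your argument is valid as written, just heavier than necessary.
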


\begin{proof}
  Let $\tilde{z}$ be a vector as in Corollary \ref{local separation}, then for
  any point $w \in U \cap M$, we have
  \[ \langle w, \tilde{z} \rangle < 0. \]
  Let $e_i$, $i = 1, 2$ be orthonormal tangent vectors at $P$. We extend $e_0
  := z$, $e_1$, and $e_2$ to a set of orthonormal basis of $\mathbb{R}^{n + 1,
  1}$. We write $\tilde{z} = \sum_{i = 0}^{n + 1} \tilde{z}^i e_i$ and any
  point $w = \sum_{i = 0}^2 w^i e_i$ in $P$. We define $v = \sum_{i = 0}^2
  \tilde{z}^i e_i$. Obviously $v$ is a tangent vector in $P$, and
  \[ \langle w, v \rangle = \langle \widetilde{z,} w \rangle < 0. \]
  Geometrically, $v$ is just the projection of $\tilde{z}$ to $P$ and it
  determines a geodesic line $L$ such that $(U \cap P \cap M) \backslash
  \{z\}$ lies on one side of $L$.
\end{proof}

\begin{lemma}
  \label{convexity via position normal relation}Let $\Omega'$ be the region
  bounded by $\partial B_0$ and $M$ such that $\nu$ points outward of
  $\Omega'$, and $\Sigma = \partial \Omega'$, and $y$ be a point in $\Sigma$,
  then
  \begin{equation}
    \langle z, \tilde{y} \rangle \leqslant 0 \label{position normal relation}
  \end{equation}
  for all $z \in \Sigma$ and equality occurring only when $z = y$.
\end{lemma}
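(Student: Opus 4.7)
By Lemma \ref{distance to a hypersurface}, the inequality $\langle z, \tilde y\rangle \leq 0$ has the geometric meaning that $z$ lies in the closed halfspace of $\mathbb{H}^{n+1}$ bounded by the totally geodesic hyperplane $P_y := \mathbb{H}^{n+1} \cap \{\langle \cdot, \tilde y\rangle = 0\}$ on the side opposite to $\tilde y$. Since $\tilde y$ is outward at $y \in \Sigma = \partial \Omega'$, the claim asserts that $P_y$ is a supporting hyperplane for $\overline{\Omega'}$ at $y$. The plan is to reduce to a two-dimensional slice and apply local-to-global convexity in $\mathbb{H}^2$.

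Pick any $z_0 \in \Sigma$ with $z_0 \neq y$; the goal reduces to $\langle z_0, \tilde y\rangle < 0$. Let $W := \mathrm{span}_{\mathbb{R}^{n+1,1}}(y, z_0, \tilde y)$; since $W$ contains the timelike vector $y$, the set $V := W \cap \mathbb{H}^{n+1}$ is a totally geodesic copy of $\mathbb{H}^2$ containing both $y$ and $z_0$, with $\tilde y \in T_yV$. Set $\gamma := \Sigma \cap V$. After a small perturbation of $V$ (replacing $z_0$ by a nearby point) to ensure transversality, $\gamma$ is a piecewise smooth simple closed curve with at most two corners, bounding the compact planar region $\Omega' \cap V \subset V$.

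Along $M \cap V$, Lemma \ref{2D local separation} makes $\gamma$ strictly locally convex at every point. Along the cap portion $\partial B_0 \cap V$, $\gamma$ is a curve of constant geodesic curvature in $\mathbb{H}^2$, hence locally convex. The free boundary condition $\langle \nu, \eta\rangle = 0$ produces corners of interior angle $\pi/2$ at $\partial M \cap V$, again compatible with convexity of $\Omega' \cap V$. Thus $\Omega' \cap V$ is a bounded planar region with locally convex piecewise-smooth boundary, and by the standard local-to-global convexity theorem in $\mathbb{H}^2$ it is globally convex. Consequently the supporting geodesic $L := P_y \cap V$ at $y$ satisfies $\gamma \subset \{x \in V : \langle x, \tilde y\rangle \leq 0\}$, giving $\langle z_0, \tilde y\rangle \leq 0$. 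The inequality is strict by Corollary \ref{local separation} (or, for $y$ on the cap, by the strict convexity of $\partial B_0$) whenever $z_0 \neq y$.

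The main obstacle is the rigorous verification that the planar slice $\gamma$ is indeed a single simple closed curve (transversality of $V$ with $\Sigma$, handled by a generic perturbation) and that the corner behavior at $\partial M \cap V$ really yields a locally convex boundary for $\Omega' \cap V$; once these technical points are dispatched, the conclusion follows from the standard two-dimensional convexity fact applied in $\mathbb{H}^2$.
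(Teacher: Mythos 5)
The student's proof takes a genuinely different route from the paper. The paper argues by contradiction: assuming $\langle z, \tilde{y}\rangle > 0$ for some $z$, it produces (by the intermediate value theorem) a point $w'$ with $\langle w', \tilde{y}\rangle = 0$, takes a curve $\gamma$ in $\Sigma$ from $y$ to $w'$, and examines the interior minimum of $t \mapsto \langle \gamma(t), \tilde{y}\rangle$, producing a sign contradiction with the second fundamental form at that point. Your proposal instead tries to establish global convexity of $\Omega' \cap V$ directly for a 2-plane $V$ through $y$, $\tilde y$, $z_0$, via a local-to-global convexity theorem for curves in $\mathbb{H}^2$, and then reads off the supporting-geodesic inequality. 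Both reduce the problem to a two-dimensional slice, but what they do with that slice differs substantially: the paper avoids any appeal to a global convexity theorem for non-smooth curves, at the cost of a more delicate coordinate computation.

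Your proposal has two concrete gaps. \emph{First}, the claim that the free boundary condition makes the corners of $\gamma = \Sigma \cap V$ have interior angle $\pi/2$ is false: the free boundary condition gives a $\pi/2$ dihedral angle between $M$ and $\partial B_0$ in $\mathbb{H}^{n+1}$, but slicing by an arbitrary transverse 2-plane $V$ does \emph{not} preserve that angle --- the angle between $T_pM \cap T_pV$ and $T_p\partial B_0 \cap T_pV$ can be anything in $(0,\pi)$. What is true (and what you actually need) is that the 2D slice of the locally convex wedge $\Omega'$ is still locally convex, so the interior angle is at most $\pi$; but this needs to be argued, not asserted via the incorrect $\pi/2$ claim. \emph{Second}, the argument implicitly requires $\Omega' \cap V$ to be connected (so that $z_0$ lies in the same component as $y$, and the supporting line at $y$ controls $z_0$). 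Since $\Omega'$ is only a topological ball a priori, a 2-plane slice could in principle be disconnected. This is ultimately resolved by the convexity you are trying to prove (or by applying the higher-dimensional local-to-global convexity theorem to $\Omega'$ itself, whose corners along $\partial M$ are genuinely $\pi/2$), but as written the argument is circular at this point. Finally, the parenthetical strictness argument at the end conflates two cases and would need more care to handle a general boundary point $y$; the paper handles this by invoking Lemma \ref{2D local separation} uniformly at non-smooth points.
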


\begin{proof}
  Note that Corollary \ref{local separation} and Lemma \ref{2D local
  separation} are valid at $\Sigma \backslash M$ as well. First, we assume
  that $y$ is a smooth point of $\Sigma$. Note that by Corollary \ref{local
  separation}, for all points $z \in \Sigma$ near $y$, $\langle z, \tilde{y}
  \rangle < 0$. Assume that for some $z \in \Sigma$ that $\langle z, \tilde{y}
  \rangle > 0$. Then by continuity, there is at least another point not $y$
  itself in $M$ which we still $w'$ satisfies $\langle w', \tilde{y} \rangle =
  0$. By Lemma \ref{distance to a hypersurface}, $w'$ lies in the subspace
  normal to $\tilde{y}$. There is a geodesic in $\mathbb{H}^{n + 1}$ from $y$
  to $w'$, there is a unique hyperbolic 2-subspace $P$ linearly spanned by
  $y$, $\tilde{y}$ and the unit tangent vector of this geodesic at $y$.
  Obviously, $w' \in P$.
  
  Denote the curve in $\Sigma$ from $y$ to $w'$ by $\gamma$ and the geodesic
  from $y$ to $w'$ in $P$ by $\gamma_0$. Since $\langle w', \tilde{y} \rangle
  = \langle y, \tilde{y} \rangle = 0$, let $z$ be the point $z \in \gamma$
  such that the function $\langle \gamma (t), \tilde{y} \rangle$ of $t$
  achieves its minimum.
  
  Again, we assume that $z$ is a smooth point of $\Sigma$. Let $v$ the normal
  to $\gamma$ in $P$. We know that $\gamma_0$ and $\gamma$ bounds a region
  $\Omega$ in $P$, we fix the orientation of $v$ so that it is consistent with
  $\tilde{y}$, that is $v$ points into $\Omega$.
  
  We change coordinates so that $e_0 = y$, $\tilde{y} = e_1$ and a unit
  tangent vector $e_2$ at $y$. The set $\{e_0, e_1, e_2 \}$ is an orthonormal
  basis of $P$. We can identify $P$ with $\mathbb{R}^{2, 1}$. The region
  $\Omega$ is located in the strip
  \[ \{z \in P =\mathbb{R}^{2, 1} : \langle z, \tilde{y} \rangle \leqslant z^1
     \leqslant 0\} . \]
  Since $v$ points into $\Omega$, so $v$ cannot point to the outward of
  $\Omega$ which is $- e_1$ direction. Hence $\langle v, \tilde{y} \rangle =
  \langle v, e_1 \rangle > 0$. Since $\langle \gamma (t), \tilde{y} \rangle$
  achieves minimum at $z$, $\tilde{y}$ is normal to $\gamma'$ at $z$. We can
  assume that
  \[ \tilde{y} = a_0 z + a_1 v. \]
  From $\langle \tilde{y}, z \rangle < 0$, $z$ is time-like, so $a_0 > 0$; and
  from $\langle v, \tilde{y} \rangle > 0$, $a_1 > 0$. Actually $a_1^2 = a_0^2
  + 1$, but it is not needed. Let $w$ be a point very close to $z$, since
  $\langle \gamma (t), \tilde{y} \rangle$ achieves minimum at $z$, so $\langle
  w - z, \tilde{y} \rangle \geqslant 0$. Hence,
\begin{align}
0 \leqslant & \langle w - z, \tilde{y} \rangle \\
= & \langle w - z, a_1 v \rangle + \langle w - z, a_0 z \rangle
\\
= & a_1 \langle w - z, v \rangle + a_0 (\langle w, z \rangle + 1)
\\
= & a_1 \langle w, v \rangle + a_0 (\langle w, z \rangle + 1) .
\end{align}
  From the distance formula \eqref{distance}, $\langle w, z \rangle + 1 < 0$.
  By positivity of $a_0$ and $a_1$,
  \[ \langle w, v \rangle > 0. \]
  Now we center the coordinate system at $z$, let $z = e_0$, unit tangent
  vector $e_1$ at $z$ and $v = e_2$. Any point $w$ in $P$ satisfies $\langle
  w, e_i \rangle = 0$ for $i \geqslant 3$. Since $\langle w, v \rangle > 0$
  near $z$ and the normal $\tilde{z}$ in $\mathbb{H}^{n + 1}$ is a linear
  combination of $e_i$ for $i \geqslant 2$, so
  \[ \langle w, \tilde{z} \rangle = \langle w, v \rangle \langle \tilde{z}, v
     \rangle . \]
  There is a curve going over $z$ in $P$ to meet the point $w'$, so $v$ must
  points outward of the region bounded by $M$, so $\langle \tilde{z}, v
  \rangle > 0$. Therefore, we get $\langle w, \tilde{z} \rangle > 0$. However,
  because of convexity near $z$ and that $\langle w, \tilde{z} \rangle < 0$,
  it leads to a contradiction.
  
  Now the same proof works through if $y$ is not a smooth point. If $\Sigma$
  is not smooth at $z$, it is sufficient that we invoke Lemma \ref{2D local
  separation} instead.
\end{proof}

Let $(\partial M)^{\ast}$ be the convex hull of $\partial M$ in $\partial B$
and $S^{\ast}$ be the set of $\omega \in \mathbb{S}^n$ such that $e_0 \cosh
\rho_0 + \omega \sinh \rho_0 \in (\partial M )^{\ast}$. Define the following
two sets:
\[ C_1 = \{z : z = e_0 \cosh \rho + \omega \sinh \rho, \rho \geqslant 0,
   \omega \in S^{\ast} \}, \]
and
\[ C_2 = \cap_{y \in \partial M} \{z : \langle z, \tilde{y} \rangle \leqslant
   0\} . \]
\begin{lemma}
  The two sets $C_1$ and $C_2$ are equivalent:
  \[ C_1 = C_2 . \]
\end{lemma}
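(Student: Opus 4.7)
The plan is to encode both sets as constraints on the "direction" $\omega$ once we exploit the free-boundary condition, and then reduce the equality $C_1=C_2$ to a standard separation statement for convex bodies on the sphere $\partial B_0$.

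First I would carry out the following reduction. For any $y\in\partial M$, the free-boundary condition $\langle \tilde y,\eta\rangle=0$, combined with the formula $\eta(y)=\tfrac{1}{\sinh\rho_0}(-e_0+\cosh\rho_0\,y)$ derived earlier and the tangency $\langle \tilde y,y\rangle=0$, forces $\tilde y^0=0$, so $\tilde y$ is a purely spatial vector in $\{e_0\}^{\perp}$. Parametrizing any $z\in\mathbb{H}^{n+1}$ in polar form $z=e_0\cosh\rho+\omega\sinh\rho$ with $\omega\in\mathbb{S}^n\subset\{e_0\}^{\perp}$, this immediately collapses the Minkowski pairing to
\[
\langle z,\tilde y\rangle=\sinh\rho\,\langle\omega,\tilde y\rangle.
\]
Consequently, membership $z\in C_2$ is equivalent to $\langle\omega,\tilde y\rangle\le0$ for every $y\in\partial M$, while membership $z\in C_1$ is equivalent to the point $p_\omega:=e_0\cosh\rho_0+\omega\sinh\rho_0\in\partial B_0$ lying in $(\partial M)^{\ast}$.

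For $C_1\subseteq C_2$, take $z\in C_1$, so $p_\omega\in(\partial M)^{\ast}$. By Lemma \ref{convexity via position normal relation} every $y'\in\partial M$ satisfies $\langle y',\tilde y\rangle\le0$, and since this inequality is linear in $y'$, it persists under convex combinations in the ambient $\mathbb{R}^{n+1,1}$ and in particular for all $p\in(\partial M)^{\ast}$. Applying it at $p=p_\omega$ gives $\sinh\rho_0\,\langle\omega,\tilde y\rangle\le 0$, hence $\langle\omega,\tilde y\rangle\le 0$, which by the reduction above yields $\langle z,\tilde y\rangle\le 0$ for every $y$.

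For the reverse inclusion $C_2\subseteq C_1$, suppose $\omega$ satisfies $\langle\omega,\tilde y\rangle\le 0$ for every $y\in\partial M$. Then $\langle p_\omega,\tilde y\rangle\le 0$ for every $y\in\partial M$, so $p_\omega$ lies in each of the half-spaces $\{q\in\partial B_0:\langle q,\tilde y\rangle\le 0\}$ cut out of $\partial B_0$ by the totally geodesic hyperplanes supporting $\partial M$ at its points. The earlier lemma says $\partial M$ is a closed convex hypersurface of the round sphere $\partial B_0\cong\mathbb{S}^n$, and $\tilde y$ is precisely its outward conormal there, so $(\partial M)^{\ast}$ equals the intersection of these boundary-supporting half-spaces by the spherical Hahn--Banach theorem. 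Thus $p_\omega\in(\partial M)^{\ast}$, giving $\omega\in S^{\ast}$ and $z\in C_1$. The only genuinely delicate point is this last ``convex body = intersection of supporting half-spaces at boundary points'' input on the sphere; this is standard provided $(\partial M)^{\ast}$ is contained in an open hemisphere of $\partial B_0$ (which one obtains from $M\subset B_0$ together with convexity), so I would briefly address it and otherwise treat it as routine convex analysis.
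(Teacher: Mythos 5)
Your proof is correct and follows essentially the same route as the paper: both reduce the Minkowski pairing to a spherical one by observing that the free boundary condition forces $\tilde y^{0}=0$, so that $\langle z,\tilde y\rangle=\sinh\rho\,\langle\omega,\tilde y\rangle$, and then invoke standard convex geometry of the sphere (the paper cites Ferreira--Iusem--N\'emeth for the supporting half-space inequality $\langle x-y,\tilde y\rangle\leqslant0$). The only difference is that the paper dismisses the inclusion $C_2\subseteq C_1$ as ``similarly proved,'' whereas you supply the actual argument --- the characterization of a spherically convex body in an open hemisphere as the intersection of its boundary supporting half-spaces --- and correctly flag the open-hemisphere hypothesis as the one point needing justification.
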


\begin{proof}
  We only show that $C_1 \subset C_2$ and the reverse relation is similarly
  proved. We define
  \[ \gamma_{\omega} (t) = e_0 \cosh t + \omega \sinh t, t \geqslant 0 \]
  be the radial geodesic in $\mathbb{H}^{n + 1}$ starting from $e_0$. Let $z =
  \gamma_{\omega} (t)$ be a point in $C_1$ and $x = \gamma_w (\rho_0)$. Since
  $x$ and $y$ are on the same level set of $z^0$, and $\tilde{y}$ has no $e_0$
  component, by results from convex geometry of the sphere
  {\cite{ferreira-projections-2013}}, we have that $\langle x - y, \tilde{y}
  \rangle \leqslant 0$. So $\langle x, \tilde{y} \rangle \leqslant 0$ and
  \[ \langle z, \tilde{y} \rangle = \sinh t \langle \xi, \tilde{y} \rangle =
     \tfrac{\sinh t}{\sinh \rho_0} \langle \sinh \rho_0, \tilde{y} \rangle =
     \tfrac{\sinh t}{\sinh \rho_0} \langle x, \tilde{y} \rangle \leqslant 0.
  \]
  Hence $z \in C_2$.
\end{proof}

It follows immediately that

\begin{corollary}
  We have that $M \subset C_1 = C_2$ and $e_0 \not\in M$.
\end{corollary}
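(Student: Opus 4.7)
The plan is to derive the inclusion $M \subset C_2$ directly from Lemma \ref{convexity via position normal relation}, conclude $M \subset C_1$ from the equivalence $C_1 = C_2$ just established, and then exclude $e_0 \in M$ by combining a short free-boundary computation with the equality clause of the same lemma.

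First I would fix $y \in \partial M$. Since $\partial M \subset \Sigma = \partial \Omega'$, Lemma \ref{convexity via position normal relation} applies and yields $\langle z, \tilde{y} \rangle \leqslant 0$ for every $z \in \Sigma$, in particular for every $z \in M \subset \Sigma$. Taking the intersection over all $y \in \partial M$ shows $M \subset C_2$, and the preceding lemma upgrades this to $M \subset C_1$.

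For the assertion $e_0 \not\in M$, the strategy is to observe that at a free boundary point $y$ the hyperplane $\{z : \langle z, \tilde{y}\rangle = 0\}$ already passes through $e_0$, so the inequality defining $C_2$ is tight at $e_0$. Concretely, at $y \in \partial M \subset \partial B_0$ the outward unit normal of $\partial B_0$ has the form $\eta = (\sinh \rho_0)^{-1}(-e_0 + (\cosh \rho_0)\, y)$ from Section \ref{geometry of hypersurfaces}. Combined with the free boundary relation $\langle \tilde{y}, \eta\rangle = 0$ and the tangency identity $\langle \tilde{y}, y\rangle = 0$, this forces $\langle e_0, \tilde{y}\rangle = 0$. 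If $e_0$ were a point of $M$, then $e_0 \in \Sigma$ and the equality clause of Lemma \ref{convexity via position normal relation} would require $e_0 = y$, contradicting the fact that $y$ lies on $\partial B_0$ at distance $\rho_0 > 0$ from $e_0$.

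I do not foresee a real obstacle. The only mildly delicate point is recognizing that the equality clause of the preceding lemma is exactly the mechanism that rules out $e_0 \in M$; the computation giving $\langle e_0, \tilde{y}\rangle = 0$ is routine from the formulas already collected in Section \ref{geometry of hypersurfaces}.
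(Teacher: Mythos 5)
Your proof is correct, and for the inclusion $M \subset C_2 = C_1$ it coincides with the paper's (the paper simply calls this part ``obvious''; both of you read it off from Lemma \ref{convexity via position normal relation}). For the assertion $e_0 \notin M$, however, you take a genuinely different route. The paper notes that $e_0$ could only be an \emph{interior} point of $M$ (since $\partial M \subset \partial B_0$), where $M$ is smooth, and then argues that containment in the radial cone $C_1$ with vertex $e_0$ is incompatible with smoothness of $M$ at that vertex. You instead observe that the free boundary condition $\langle \tilde{y}, \eta \rangle = 0$ at $y \in \partial M$, combined with $\eta = (\sinh\rho_0)^{-1}(-e_0 + \cosh\rho_0\, y)$ and $\langle \tilde{y}, y \rangle = 0$, forces $\langle e_0, \tilde{y} \rangle = 0$, so that $e_0$ sits exactly on the supporting hyperplane at $y$; the equality clause of Lemma \ref{convexity via position normal relation} then pins equality down to $z = y$, which is impossible since $y$ is at distance $\rho_0 > 0$ from $e_0$. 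Your computation is correct ($\tilde y^0=0$ on $\partial M$ is the boundary counterpart of Lemma \ref{N0 less than 0}), and your argument has the advantage of not needing the smoothness-versus-cone-vertex dichotomy (which implicitly requires that $S^{\ast}$ not contain a hemisphere); its cost is that it leans on the rigidity (equality) part of Lemma \ref{convexity via position normal relation} applied at a corner point of $\Sigma$, which the paper's proof of that lemma does cover but only via the non-smooth case handled through Lemma \ref{2D local separation}. Both arguments are acceptable.
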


\begin{proof}
  The fact $M \subset C_2$ is obvious. If $e_0$ is a point in $M$, it has to
  be an interior point of $M$, and $M$ is smooth here. But $C_1$ is a convex
  radial cone and $M \subset C_1$ says that $M$ is not smooth at $e_0$.
\end{proof}

Let $\hat{M}$ be the enclosed region by $\partial B$ and $M$ with the unit
normals $\nu$ pointing outside of $\hat{M}$. Let
\[ C_0 = \cap_{z \in \partial M} \{w \in \mathbb{H}^{n + 1} : \langle w,
   \tilde{z} \rangle \leqslant 0\} . \]
We see that $\hat{M}$ is contained in $C_0$.

\begin{lemma}
  \label{height}Let $\rho > 0$ and $C \subset \mathbb{R}^{n + 1, 1}$ be the
  cone
  \[ C = \{z = e_0 \cosh t + \omega \sinh t : t \geqslant 0, \xi \in S^{\ast}
     \} \]
  for some convex set $S^{\ast} \subset \mathbb{S}^n$. If for all $\omega \in
  S^{\ast}$ and some $\varepsilon \in \left( 0, \tfrac{\pi}{2} \right)$ such
  that $\langle \omega, e_1 \rangle \geqslant \cos (\tfrac{\pi}{2} -
  \varepsilon)$,
  \[ z^1 = \langle z, e_1 \rangle \geqslant \frac{\sinh R}{\cos \varepsilon}
  \]
  if $B_R (z) \subset C$.
\end{lemma}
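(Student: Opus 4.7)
The plan is to enlarge $C$ to an explicit auxiliary cone that is an intersection of totally geodesic hyperbolic half-spaces through $e_0$, and then convert the condition $B_R(z)\subset C$ into a lower bound on the hyperbolic distance from $z$ to each such hyperplane via Lemma \ref{distance to a hypersurface}. The hypothesis forces $S^{\ast}$ to lie in the spherical cap $\{\omega\in\mathbb{S}^n:\omega\cdot e_1\geq\sin\varepsilon\}$, so
\[
C\subset C_\varepsilon:=\{e_0\cosh t+\omega\sinh t:t\geq 0,\ \omega\in\mathbb{S}^n,\ \omega^1\geq\sin\varepsilon\},
\]
and it suffices to prove the bound with $C_\varepsilon$ in place of $C$.

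The key step is to represent $C_\varepsilon$ as an intersection of hyperbolic half-spaces. For each unit vector $u$ in the spatial part of $\mathbb{R}^{n+1,1}$ orthogonal to $e_1$, set
\[
\tilde{y}_u:=-\cos\varepsilon\,e_1+\sin\varepsilon\,u,
\]
a unit spacelike vector orthogonal to $e_0$, so the hyperplane $P_u:=\{w\in\mathbb{H}^{n+1}:\langle w,\tilde{y}_u\rangle=0\}$ is a totally geodesic hyperplane through $e_0$ in the setup of Lemma \ref{distance to a hypersurface}. I claim
\[
C_\varepsilon=\bigcap_{u}\{w\in\mathbb{H}^{n+1}:\langle w,\tilde{y}_u\rangle\leq 0\}.
\]
The inclusion $\subset$ is a direct computation: for $w=e_0\cosh t+\omega\sinh t$ with $\omega^1\geq\sin\varepsilon$, we have $\langle w,\tilde{y}_u\rangle=\sinh t(-\cos\varepsilon\,\omega^1+\sin\varepsilon\,\omega\cdot u)$, and combining $\omega\cdot u\leq\sqrt{1-(\omega^1)^2}$ (Cauchy--Schwarz on the projection of $\omega$ onto $e_1^{\perp}$) with $(\omega^1)^2\geq\sin^2\varepsilon$ shows that this is non-positive. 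The reverse inclusion follows by maximizing $\langle w,\tilde{y}_u\rangle$ over $u$ in the direction of the spatial component of $w$ orthogonal to $e_1$.

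With this representation in hand, suppose $B_R(z)\subset C\subset C_\varepsilon$. Then for each $u$ the ball $B_R(z)$ lies in the closed half-space bounded by $P_u$, so the hyperbolic distance from $z$ to $P_u$ is at least $R$. By Lemma \ref{distance to a hypersurface}, this translates into $-\langle z,\tilde{y}_u\rangle\geq\sinh R$, i.e.,
\[
\cos\varepsilon\,z^1-\sin\varepsilon\,(z\cdot u)\geq\sinh R.
\]
Choosing $u$ in the direction of the spatial component of $z$ orthogonal to $e_0,e_1$ (or any admissible $u$ if that component vanishes) makes $z\cdot u\geq 0$, so $\cos\varepsilon\,z^1\geq\sinh R$, which gives $z^1\geq\sinh R/\cos\varepsilon$. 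The main technical point is the half-space representation of $C_\varepsilon$ together with tracking the sign of $\langle z,\tilde{y}_u\rangle$ in the distance formula; once these are set up, the conclusion is an immediate consequence.
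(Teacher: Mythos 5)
Your proof is correct, and it takes a genuinely different route from the paper's. Both arguments begin by enlarging $C$ to the rotationally symmetric cone over the cap $\{\omega:\langle\omega,e_1\rangle\geqslant\sin\varepsilon\}$, but from there they diverge. The paper works inside the two-dimensional hyperbolic subspace spanned by $e_0$, the point $z$, and the touching point of $\partial B_{R'}(z)$ with the boundary of the enlarged cone, and extracts the bound from the hyperbolic law of sines in a right triangle together with a Poincar\'e ball computation of $z^1$; the trigonometric estimate $\sin\varepsilon\cot\angle DOA+\cos\varepsilon\geqslant 1/\cos\varepsilon$ there implicitly uses $\angle DOA\leqslant\tfrac{\pi}{2}-\varepsilon$. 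You instead write the enlarged cone as an intersection of half-spaces bounded by the totally geodesic hyperplanes through $e_0$ with normals $\tilde y_u=-\cos\varepsilon\,e_1+\sin\varepsilon\,u$, convert $B_R(z)\subset C_\varepsilon$ into $-\langle z,\tilde y_u\rangle\geqslant\sinh R$ via the signed distance formula of Lemma \ref{distance to a hypersurface}, and optimize over $u$. This stays entirely in the hyperboloid model, replaces the triangle trigonometry by a linear-algebraic support-hyperplane argument, and reuses a lemma already proved in the paper, so it is arguably cleaner and easier to check; the paper's version, on the other hand, makes the geometric picture (the angle to the axis) more explicit. Two small remarks: the reverse inclusion $\bigcap_u\{\langle w,\tilde y_u\rangle\leqslant 0\}\subset C_\varepsilon$ is never used, so you could omit it; and it is worth stating explicitly that containment of $B_R(z)$ in the closed half-space forces $\operatorname{dist}(z,P_u)\geqslant R$ (continue the minimizing geodesic past its foot on $P_u$ to produce a point of $B_R(z)$ on the wrong side otherwise).
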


\begin{proof}
  We define a new cone
  \[ C' = \{z = e_0 \cosh t + \omega \sinh t : t \geqslant 0, \langle \omega,
     e_1 \rangle \geqslant \cos (\tfrac{\pi}{2} - \varepsilon)\} . \]
  Obviously, $C \subset C'$ and the distance from $z$ to $\partial C'$ denoted
  by $R'$ is greater than $R$. Assume that $\partial B_{R'} (z)$ touches
  $\partial C'$ at $D$ and let $A$ represents the point $z$. The geodesic
  starting from $A$ to $D$ must be orthogonal to the ray $O D$. The three
  points $O$, $A$ and $D$ spans a two dimensional hyperbolic subspace. By the
  hyperbolic sine law (see {\cite[Chapter 1]{katok-fuchsian-1992}}), we have
  that
  \[ \tfrac{\sinh R'}{\sin \angle D O A} = \tfrac{\sinh O A}{\sin
     \tfrac{\pi}{2}}, \]
  where $O A = \tfrac{2 |x|}{1 - |x|^2}$. We have that
  \[ x^1 = |x| \cos \angle A O B', \]
  where $B'$ is point realizing the shortest Euclidean distance from $A$ to
  the line $x^1$ axis. So
  \[ z^1 = \tfrac{2 x^1}{1 - |x|^2} = \tfrac{2 |x| \cos \angle A O B'}{1 -
     |x|^2} = \tfrac{\cos \angle A O B'}{\sin \angle D O A} \sinh R' . \]
  We just have to estimate $\tfrac{\cos \angle A O B'}{\sin \angle D O A}$:
\begin{align}
\tfrac{\cos \angle A O B'}{\sin \angle D O A} = & \tfrac{\cos
(\tfrac{\pi}{2} - \varepsilon - \angle D O A)}{\sin \angle D O A}
\\
= & \tfrac{\sin (\varepsilon + \angle D O A)}{\sin \angle D O A}
\\
= & \sin \varepsilon \cot \angle D O A + \cos \varepsilon .
\end{align}
  The right hand of the the above is uniformly bounded below by
  $\tfrac{1}{\cos \varepsilon}$ since $\angle D O A = \tfrac{\pi}{2} -
  \varepsilon$ and
  \[ \sin \varepsilon \cot \angle D O A + \cos \varepsilon = \sin \varepsilon
     \cot (\tfrac{\pi}{2} - \varepsilon) + \cos \varepsilon = \tfrac{1}{\cos
     \varepsilon} . \]
  This concludes our proof.
\end{proof}

We can also look at this in the Poincar{\'e} ball model. Then if $B_R (A)
\subset C$, in terms of ball model,
\[ z^1 := \tfrac{2 x^1}{1 - |x|^2} \geqslant \sinh R (1 + \delta), \]
($x$ is the point of $A$) where $\delta$ only depends only on $\varepsilon$.
If $x^1 / |x| = \cos \theta \in [0, \tfrac{\pi}{2} - \varepsilon]$, we have a
lower bound on $x^1$.

\begin{corollary}
  Under the same conditions of Lemma \ref{height}, we have that there exists a
  $\delta' \in (0, 1)$ such that
  \[ x^1 \geqslant \delta' > 0. \]
\end{corollary}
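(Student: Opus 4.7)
The plan is to bound $x^1$ directly using the geometry of the cone in the Poincar{\'e} ball, bypassing $z^1$ entirely. Parametrize the point $A \in C$ in hyperboloidal coordinates as
\[ A = e_0 \cosh t + \omega \sinh t, \quad \omega \in S^{\ast}, \quad t = d_{\mathbb{H}^{n+1}}(e_0, A). \]
Applying the stereographic projection via \eqref{x to z}, the Poincar{\'e} image $x$ of $A$ satisfies
\[ x = \tfrac{\sinh t}{1 + \cosh t}\,\omega = \tanh(t/2)\,\omega, \]
so in particular $|x| = \tanh(t/2)$ and $x^1 = \tanh(t/2)\,\langle \omega, e_1 \rangle$. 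This reduces everything to bounding the two scalar factors from below.

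For the angular factor, the hypothesis of Lemma \ref{height} gives $\langle \omega, e_1 \rangle \geqslant \cos(\tfrac{\pi}{2} - \varepsilon) = \sin \varepsilon$, which is strictly positive. For the radial factor I use that the apex $e_0$ lies in $\partial C$ (since $S^{\ast}$ is a proper subset of $\mathbb{S}^n$, every neighborhood of $e_0$ meets the complement of $C$). Hence the inclusion $B_R(A) \subset C$ forces
\[ t = d_{\mathbb{H}^{n+1}}(e_0, A) \geqslant R, \qquad \text{so} \qquad \tanh(t/2) \geqslant \tanh(R/2) > 0. \]
Multiplying the two bounds yields
\[ x^1 \geqslant \tanh(R/2)\,\sin \varepsilon =: \delta' > 0, \]
and $\delta' < 1$ is automatic from $|x| < 1$ and $\sin \varepsilon < 1$.

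There is no genuine obstacle: the entire argument is a direct geometric consequence of two facts, namely that points on the radial geodesic from $e_0$ in direction $\omega$ map to $\tanh(t/2)\omega$ in the ball model, and that the apex of a non-degenerate hyperbolic cone is on its boundary. If one prefers to deduce the corollary \emph{from} the conclusion of Lemma \ref{height}, the only added step is rewriting $z^1 = \tfrac{2 x^1}{1-|x|^2}$ and using $|x| = \tanh(t/2) < 1$ to convert the lower bound on $z^1$ into one on $x^1$; this gives the same $\delta'$ up to a harmless constant, so I would present the direct route above for clarity.
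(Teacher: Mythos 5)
Your proof is correct and takes a genuinely different, more elementary route than the paper's. The paper derives the lower bound on $x^1$ from the conclusion of Lemma \ref{height}, stated in the remark preceding the corollary as $z^1 \geqslant \sinh R\,(1+\delta)$: it combines $z^1 = \tfrac{2x^1}{1-|x|^2}$ with $|x| = x^1/\cos\theta$, solves the resulting quadratic in $x^1$ to get $x^1 = z^1\big/\big(1+\sqrt{1+ (z^1)^2/\cos^2\theta}\big)$, and invokes monotonicity of $s \mapsto s/(\sqrt{1+s^2}+1)$. Your proof bypasses the conclusion of Lemma \ref{height} entirely and uses only its hypotheses: parametrizing the point as $A = e_0\cosh t + \omega\sinh t$, the inverse of \eqref{x to z} sends it to $x = \tanh(t/2)\,\omega$, so $x^1 = \tanh(t/2)\,\langle\omega, e_1\rangle$, and you bound each factor separately, namely $\langle\omega, e_1\rangle \geqslant \sin\varepsilon$ from the hypothesis on $S^{\ast}$ and $t \geqslant R$ from the apex $e_0$ being a boundary point of the cone $C$ (so $B_R(A) \subset C$ forces $d_{\mathbb{H}^{n+1}}(e_0, A) \geqslant R$). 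The apex-on-the-boundary observation is a clean geometric shortcut and gives the explicit constant $\delta' = \tanh(R/2)\sin\varepsilon$ with no algebra. What the paper's route buys is a somewhat sharper constant, since Lemma \ref{height} measures the distance from $A$ to the lateral boundary of the cone rather than merely to the apex, yielding $z^1 \geqslant \sinh R/\cos\varepsilon$ instead of your weaker $z^1 \geqslant \sinh R\,\sin\varepsilon$; but for this corollary any positive $\delta'$ suffices, and your derivation is the clearer one.
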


\begin{proof}
  We solve $x^1$ in terms of $z^1$, we got
  \[ z^1 \left( \tfrac{(x^1)^2}{\cos^2 \theta} - 1 \right) + 2 x^1 = 0, \]
  and
  \[ x^1 = \frac{- 2 + \sqrt{4 + 4 z^2 / \cos^2 \theta}}{2 \tfrac{z}{\cos^2
     \theta}} = \frac{\sqrt{1 + z^2 / \cos^2 \theta} - 1}{z / \cos^2 \theta} =
     \frac{z}{\sqrt{1 + z^2 / \cos^2 \theta} + 1}, \]
  (we have dropped the negative solution) since the function
  $\tfrac{x}{\sqrt{1 + x^2} + 1}$ is increasing, we have that $x^1$ is bounded
  below by the above when $z = \sinh R (1 + \delta)$.
\end{proof}

\begin{lemma}
  \label{N0 less than 0}For every interior point $z$ of $M$, we have that the
  zeroth component of the normal at $x$ is strictly negative i.e.
  \[ \tilde{z}^0 < 0. \]
\end{lemma}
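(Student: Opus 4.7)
The plan is to exploit the cone inclusion $M \subset C_1 = C_2$ from the preceding corollary by analysing the radial geodesic from $e_0$ through $z$, and to extract the sign of $\tilde{z}^0$ from the direction in which this geodesic crosses $M$. First I would show that $e_0 \not\in \overline{\hat{M}}$. The corollary already gives $e_0 \not\in M$, and $e_0$ is not on $\partial B_0$; so it suffices to rule out $e_0$ being an interior point of the convex body $\hat{M}$. If it were, then for each $\omega \in S^{\ast}$ the radial geodesic $\gamma_\omega(t) = e_0 \cosh t + \omega \sinh t$ would join an interior point of $\hat{M}$ to the boundary point $\gamma_\omega(\rho_0) \in (\partial M)^{\ast} \subset \partial \hat{M}$, and hence stay in the interior of $\hat{M}$ for all $t \in (0, \rho_0)$; but $M \subset C_1$ forces every $z \in M$ to lie on some such $\gamma_\omega$ at a time $T < \rho_0$, contradicting $M \subset \partial \hat{M}$.

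Now fix a smooth interior point $z \in M$ and let $\gamma$ be the radial geodesic from $e_0$ through $z$, so $\gamma(T) = z$ with $\omega \in S^{\ast}$ and $T \in (0, \rho_0)$. Since $\gamma(0) \not\in \hat{M}$, $\gamma(\rho_0) \in (\partial M)^{\ast} \subset \partial \hat{M}$, and $\hat{M}$ is convex, $\gamma$ meets $\overline{\hat{M}}$ for the first time at some $t_1 \in (0, \rho_0]$; the first-entry point cannot lie on $\partial B_0$ when $t_1 < \rho_0$, so it lies on $M$. Convexity then forces $\gamma$ to stay in $\overline{\hat{M}}$ for $t \in [t_1, \rho_0]$, so $\gamma$ can meet $M$ only at $\gamma(t_1)$: a second crossing would eject $\gamma$ from $\overline{\hat{M}}$ before $\rho_0$, preventing $\gamma(\rho_0) \in (\partial M)^{\ast}$. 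Hence $T = t_1$ and $\gamma(t) \not\in \hat{M}$ for $t \in [0, T)$, so $\gamma$ enters $\hat{M}$ transversally at $z$, giving $\langle \gamma'(T), \tilde{z}\rangle \leqslant 0$. Strict inequality follows because $\langle \gamma'(T), \tilde{z}\rangle = 0$ would place the entire geodesic $\gamma$ inside the totally geodesic tangent hyperplane $P_z$ (since $z \in P_z$ and $\gamma'(T) \in T_z P_z$), whence $\gamma(\rho_0) \in (\partial M)^{\ast} \cap P_z$; but Lemma~\ref{convexity via position normal relation} forces this intersection to be empty since $\gamma(\rho_0) \neq z$.

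A direct computation then closes the argument. From $\langle z, \tilde{z}\rangle = 0$ one obtains $\langle \omega, \tilde{z}\rangle = - \coth T \, \langle e_0, \tilde{z}\rangle$, and substituting into $\gamma'(T) = e_0 \sinh T + \omega \cosh T$ gives
$$\langle \gamma'(T), \tilde{z}\rangle = \sinh T \, \langle e_0, \tilde{z}\rangle + \cosh T \, \langle \omega, \tilde{z}\rangle = - \frac{\langle e_0, \tilde{z}\rangle}{\sinh T}.$$
Strict negativity of the left-hand side therefore forces $\langle e_0, \tilde{z}\rangle > 0$, and since $\langle e_0, \tilde{z}\rangle = -\tilde{z}^0$ in the Minkowski inner product, we conclude $\tilde{z}^0 < 0$. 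The main obstacle is the middle paragraph: ruling out the scenario where $z$ is an \emph{exiting} rather than an \emph{entering} intersection of the radial geodesic with $\hat{M}$. This is precisely the step where the cone inclusion $M \subset C_1$, the location of $(\partial M)^{\ast}$ on $\partial B_0$, and the convexity of $\hat{M}$ all have to combine, without which the computation would only pin down $|\tilde{z}^0|$ and not its sign.
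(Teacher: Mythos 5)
Your proof is correct and follows essentially the same route as the paper: parametrize the radial geodesic from $e_0$ through $z$, establish $\langle \gamma'(T),\tilde z\rangle<0$ together with $\langle \gamma(T),\tilde z\rangle=0$, and eliminate to get $\tilde z^0<0$. The only difference is that the paper asserts the key inequality $\langle \gamma'(T),\tilde z\rangle<0$ in one line ``by convexity of $\Sigma$,'' whereas you supply the detailed first-entry and tangency argument (resting on $M\subset C_1$, $(\partial M)^{\ast}\subset\partial\hat M$ and Lemma~\ref{convexity via position normal relation}) that makes this step precise.
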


\begin{proof}
  Let $w (t)$ be the geodesic ray starting from $e_0$ and passing through $z$.
  Note $z$ is an interior point of $\Sigma$. So we have that
  \[ \langle \tilde{z}, w' (t_0) \rangle < 0 = \langle w (t_0), \tilde{z}
     \rangle . \]
  The $\langle \tilde{z}, w' (t_0) \rangle < 0$ follows from convexity of
  $\Sigma$ and $\langle w (t_0), \tilde{z} \rangle = 0$ follows from the fact
  that $\tilde{x}$ is in the tangent space at $z = w (t_0)$.
  
  We use the unit parametrization $w (t) = e_0 \cosh t_0 + \xi \sinh t_0$
  where $\xi \in \mathbb{S}^n$ for the curve $w (t)$. The above condition are
  then
\begin{align}
\langle w' (t_0), z \rangle & = \langle e_0, \tilde{z} \rangle \sinh t_0 +
\langle \xi, \tilde{z} \rangle \cosh t_0 < 0 \\
\langle w (t_0), \tilde{z} \rangle & = \langle e_0, \tilde{z} \rangle
\cosh t_0 + \langle \xi, \tilde{z} \rangle \sinh t_0 = 0.
\end{align}
  Eliminating $\langle \xi, \tilde{z} \rangle$, we have
\begin{align}
& \langle w' (t_0), \tilde{z} \rangle \\
= & \langle e_0, \tilde{z} \rangle (\sinh t_0 - \tfrac{\cosh^2 t_0}{\sinh
t_0}) \\
= & - \tfrac{1}{\sinh t_0} \langle e_0, \tilde{z} \rangle =
\tfrac{\tilde{z}^0}{\sinh t_0} < 0.
\end{align}
  So the zeroth component is less than zero.
\end{proof}

The following corollary shows that $M$ lies in a half geodesic ball omitting
one point in the direction of the axis. It is an important fact in writing the
inverse mean curvature flow in coordinates and turning it into a parabolic
partial differential equation of a scalar function with Neumann boundary
conditions.

\begin{corollary}
  \label{M in pointed half ball}Let $p_1 = e_0 \cosh \rho_0 + e_1 \sinh
  \rho_0$, then
  \[ M \subset B^+ \backslash \{p_1 \} . \]
\end{corollary}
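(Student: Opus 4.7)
The plan is to combine the cone containment $M \subset C_1$ just established with a careful choice of the axis $e_1$. Since $\partial M$ is a closed convex hypersurface in the round sphere $\partial B_0$, it bounds the compact convex region $(\partial M)^*$, and any compact convex subset of a sphere enclosed by a closed convex hypersurface is contained in an open hemisphere. Transferring to $\mathbb{S}^n$, I can choose $e_1 \in \mathbb{S}^n$ lying in the interior of $S^*$ together with $\varepsilon \in (0, \pi/2)$ such that $\langle \omega, e_1 \rangle \geqslant \sin \varepsilon$ for every $\omega \in S^*$. Fix this $e_1$ and set $z^1 = \langle z, e_1 \rangle$; then $B^+$ is interpreted as the pointed half-ball $\{z \in B_0 : z^1 > 0\}$ and $p_1 = e_0 \cosh \rho_0 + e_1 \sinh \rho_0$ is its pole on $\partial B_0$.

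To show $M \subset B^+$, take any $z \in M$. Using $M \subset C_1$ and $e_0 \notin M$ from the previous corollary, write $z = e_0 \cosh \rho + \omega \sinh \rho$ for some $\omega \in S^*$ and some $\rho > 0$. Then $z^1 = \omega^1 \sinh \rho \geqslant \sin \varepsilon \cdot \sinh \rho > 0$, so $z \in B^+$.

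For the exclusion $p_1 \notin M$, suppose for contradiction that $p_1 \in M$. Since $p_1 \in \partial B_0$, the free boundary condition forces $p_1 \in \partial M$. But the correspondence $\omega \mapsto e_0 \cosh \rho_0 + \omega \sinh \rho_0$ sends $S^*$ homeomorphically onto $(\partial M)^*$ and $\partial S^*$ onto $\partial M$; because $e_1$ is an interior point of $S^*$, its image $p_1$ is an interior point of $(\partial M)^*$ in $\partial B_0$, and therefore cannot lie on the boundary $\partial M$. This contradiction yields $p_1 \notin M$.

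The main obstacle is the first step: arranging the coordinates so that $S^*$ fits strictly inside an open spherical cap around some interior point $e_1$, with a uniform lower bound $\sin \varepsilon > 0$. This is a convex-geometric fact about compact convex regions bounded by a closed hypersurface in $\mathbb{S}^n$; once this choice is fixed, the remaining steps follow directly from the already-established cone inclusion $M \subset C_1$ together with the free boundary condition.
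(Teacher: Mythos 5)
Your derivation of $M \subset B^+$ from the cone containment $M \subset C_1$, together with a choice of axis $e_1$ for which $S^{\ast}$ lies in a spherical cap of radius $<\tfrac{\pi}{2}$ about $e_1$, is sound and consistent with the conventions the paper uses elsewhere (Lemma \ref{height}, Lemma \ref{estimate normal}); likewise your observation that $e_1 \in \operatorname{int}(S^{\ast})$ forces $p_1 \notin \partial M$ is correct. The genuine gap is the sentence ``since $p_1 \in \partial B_0$, the free boundary condition forces $p_1 \in \partial M$.'' The free boundary condition is only the statement $\langle \nu, \eta\rangle = 0$ along $\partial M$; it says nothing about whether an \emph{interior} point of $M$ can lie on the geodesic sphere $\partial B_0$. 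Excluding that possibility is precisely the content of the paper's proof of this corollary, and it is where Lemma \ref{N0 less than 0} enters: if $p_1$ were an interior point of $M$ lying on $\partial B_0$, then $z^0$ would attain its maximal value $\cosh\rho_0$ over $\overline{B_0}$ at an interior point of $M$, hence an interior maximum; but by \eqref{laplacian z} one has $\Delta z^0 = n z^0 - H\nu^0 > 0$ there, since $z^0>0$, $H>0$ by convexity, and $\nu^0 < 0$ by Lemma \ref{N0 less than 0}, contradicting $\Delta z^0 \leqslant 0$ at an interior maximum. Without this (or an equivalent) maximum-principle step your case analysis is incomplete: you have excluded $p_1$ from $\partial M$ but not from the interior of $M$.

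That said, your two correct steps are not redundant with the paper: the paper's written proof treats only the interior-touching case and is silent both on why $M \subset B^+$ (i.e.\ $z^1>0$ on $M$) and on why $p_1 \notin \partial M$, so your cone argument and the convexity argument for $\partial M$ genuinely fill in those parts. A complete proof is obtained by combining your two steps with the maximum-principle argument above. One small point worth making explicit if you keep your formulation: the existence of $e_1 \in \operatorname{int}(S^{\ast})$ with $\langle\omega, e_1\rangle \geqslant \sin\varepsilon$ for all $\omega \in S^{\ast}$ follows, e.g., by taking $e_1$ to be the circumcenter of the compact convex set $S^{\ast}$, whose circumradius is strictly less than $\tfrac{\pi}{2}$ because $S^{\ast}$ lies in an open hemisphere.
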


\begin{proof}
  If the interior of $M$ touches the geodesic sphere $\partial B_0$ at $p_1
  \in M$, then $p_1$ is an interior maximum point of the function $z^0$. Since
  \[ \Delta z^0 = n z^0 - H \nu^0 . \]
  From Lemma \ref{N0 less than 0} follows $\Delta \rho > 0$ at $p_1$
  contradicting the fact $p$ is maximum point of $\rho$.
\end{proof}

\begin{lemma}
  \label{estimate normal}The first component of $\nu$ is bounded, in
  particular,
  \[ \nu_1 \leqslant - c_0, \]
  where $c_0$ depends only on the distance of $S^{\ast}$ to the equator
  \begin{equation}
    \mathcal{H} (e_1) = \{e \in \mathbb{S}^n : \langle e, e_1 \rangle = 0\} .
    \label{equator}
  \end{equation}
\end{lemma}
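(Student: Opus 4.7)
The plan is to combine the position--normal relation of Lemma \ref{convexity via position normal relation} with the free boundary condition at $\partial M$, and to exploit the hypothesis that $S^{*}$ lies at positive angular distance from the equator $\mathcal{H}(e_1)$.

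At an interior point $z\in M$, I would parametrize points $y\in D^{*}\subset\Sigma$ on the spherical ``disk'' as $y=e_0\cosh\rho_0+\omega_y\sinh\rho_0$ with $\omega_y\in S^{*}$. Applying Lemma \ref{convexity via position normal relation} then gives the family of linear constraints
\[
\omega_y\cdot\vec\nu\leq\coth\rho_0\,\nu^0,\qquad\omega_y\in S^{*},
\]
where $\vec\nu=(\nu^1,\ldots,\nu^{n+1})$ and $\nu^0<0$ by Lemma \ref{N0 less than 0}. This states geometrically that the unit direction $\vec\nu/|\vec\nu|\in\mathbb{S}^n$ lies in the spherical polar of the convex body $S^{*}$. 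The identity $\langle z,\nu\rangle=0$ furnishes the complementary equality $\omega_z\cdot\vec\nu=\coth\rho_z\,\nu^0$ at $z$ itself.

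Next I would invoke the hypothesis $\operatorname{dist}(S^{*},\mathcal{H}(e_1))\geq\alpha>0$, i.e.\ $\omega\cdot e_1\geq\sin\alpha$ for all $\omega\in S^{*}$. Since $\partial M$ is a closed convex hypersurface in $\partial B_0$, $S^{*}$ is a full-dimensional convex body on $\mathbb{S}^n$ contained in the spherical cap $\{\omega:\omega\cdot e_1\geq\sin\alpha\}$. A direct spherical support-function computation, applied in the worst case where $S^{*}$ is tangent to $\mathcal{H}(e_1)$, shows that the polar of any such $S^{*}$ lies in $\{\tilde\omega:\tilde\omega\cdot e_1\leq-c_0\}$ for some $c_0=c_0(\alpha,\rho_0)>0$, so that $\nu^1=|\vec\nu|\,\tilde\omega\cdot e_1\leq -c_0$ using $|\vec\nu|\geq 1$. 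For $z\in\partial M$ the free boundary condition $\langle\nu,\eta\rangle=0$ forces $\nu^0=0$ and $\vec\nu\cdot\omega_z=0$, so $\vec\nu$ is the outward unit conormal of $S^{*}\subset\mathbb{S}^n$ at its boundary point $\omega_z$, and the same spherical geometry argument yields $\nu^1\leq-c_0$.

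The main obstacle is making the polar-cone/support-function step quantitative in a way that depends only on the equator distance. The polar inclusion $\vec\nu/|\vec\nu|\in (S^{*})^{\circ}$ is only a set inclusion: to extract a uniform lower bound on $|\nu^1|$ depending only on $\alpha$ and not on the detailed shape of $S^{*}$ one must use both the strict sign $\coth\rho_0\,\nu^0<0$ (which upgrades the polar inclusion to a strict one) and the full-dimensional convexity of $S^{*}$ as the convex hull of the closed convex hypersurface $\partial M$.
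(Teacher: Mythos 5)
Your proposal follows essentially the same route as the paper: the paper also applies the position--normal relation \eqref{position normal relation} to the points $z'=e_0\cosh\rho_0+\xi'\sinh\rho_0$, $\xi'\in S^{\ast}$, writes $\nu=-e_0\sinh s+\xi\cosh s$ with $s\geqslant 0$ (i.e.\ $\nu^0\leqslant 0$ from Lemma \ref{N0 less than 0}), deduces $\langle\xi,\xi'\rangle\leqslant-\tanh s/\tanh\rho_0\leqslant 0$ for all $\xi'\in S^{\ast}$, and then invokes the spherical convexity of $S^{\ast}$ and its distance to the equator to bound $\langle\xi,e_1\rangle$. The final polar-cone step is left just as implicit in the paper as in your writeup, so your concluding remarks about making it quantitative apply equally to the original.
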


\begin{proof}
  Any point in $(\partial M)^{\ast}$ can be represented by
  \[ z' = e_0 \cosh \rho_0 + \xi' \sinh \rho_0, t > 0, \xi' \in S^{\ast} . \]
  By \eqref{position normal relation}, $\langle \nu, z' \rangle \leqslant 0$.
  From Lemma \ref{N0 less than 0}, we can assume that
  \[ \nu = - e_0 \sinh s + \xi \cosh s, s \geqslant 0, \xi \in \mathbb{S}^n .
  \]
  We have that
  \[ \langle \nu, z' \rangle = \sinh s \cosh \rho_0 + \langle \xi, \xi'
     \rangle \sinh \rho_0 \cosh s \leqslant 0. \]
  So
  \[ \langle \xi, \xi' \rangle \leqslant - \tfrac{\tanh s}{\tanh \rho_0} . \]
  Note that
  \[ \langle \xi, \xi' \rangle \leqslant 0, - 1 \leqslant - \tfrac{\tanh
     s}{\tanh \rho_0} . \]
  This gives a trivial bound $0 \leqslant s \leqslant \rho_0$ on $s$. And
  since $\xi'$ lies in a convex set $S^{\ast}$ of $\mathbb{S}^n$, this gives a
  bound on $\langle \xi, e_1 \rangle$ which depends on the distance of
  $S^{\ast}$ to the equator $\mathcal{H} (e_1)$.
\end{proof}

We now prove an estimate of $z^1$ for convex free boundary hypersurfaces.

\begin{lemma}
  \label{height estimate}Let $M$ be a strictly convex hypersurfaces, then
  \[ z^1 \geqslant \delta > 0 \]
  for all $M$, where the constant $\delta$ depends only on $\sup_M |A|$ and
  the distance of $\partial M$ to the equator $\mathcal{H} (e_1)$.
\end{lemma}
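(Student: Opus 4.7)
The plan is to split into two cases depending on whether $z$ lies on $\partial M$ or in the interior of $M$.

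For a boundary point $z \in \partial M$, I write $z = e_0 \cosh \rho_0 + \omega \sinh \rho_0$ with $\omega \in S^*$. If $\varepsilon > 0$ denotes the angular distance from $\partial M$ to the equator $\mathcal{H}(e_1)$, then $S^*$, being the convex hull of $\partial M$ in $\partial B_0$, also stays in the region $\langle \omega, e_1 \rangle \geq \sin \varepsilon$, so $z^1 \geq \sin \varepsilon \sinh \rho_0 > 0$ directly. This disposes of the boundary case.

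For an interior point $z$, I exploit strict convexity together with $|A| \leq K := \sup_M |A|$ to fit an interior hyperbolic geodesic ball $B_R(y) \subset \hat M$ of radius $R = R(K) > 0$ tangent to $M$ at $z$ from the concave side. Since $M \subset C_1$, and any radial ray from $e_0$ through a point $p \in \hat M$ must first pierce $M$ (yielding a direction $\omega \in S^*$ for that ray), one has $\hat M \subset C_1$ and hence $B_R(y) \subset C_1$. Lemma~\ref{height} applied to $y$ then supplies
\[
y^1 \geq \frac{\sinh R}{\cos \varepsilon}.
\]

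To pass from this bound on $y^1$ to one on $z^1$, I use that $z$ lies on the hyperbolic sphere of radius $R$ centered at $y$, so $z = y \cosh R + v \sinh R$ for some Minkowski unit tangent $v$ at $y$. Decomposing $e_1 = -y^1 y + w$ with $w \in T_y \mathbb{H}^{n+1}$ gives $\langle w, w \rangle = 1 + (y^1)^2$, so $v^1 = \langle v, e_1 \rangle = \langle v, w \rangle \geq -\sqrt{1+(y^1)^2}$ by Cauchy--Schwarz in $T_y \mathbb{H}^{n+1}$. Therefore
\[
z^1 \geq y^1 \cosh R - \sqrt{1+(y^1)^2}\, \sinh R =: f(y^1),
\]
and a quick computation shows $f$ is monotone increasing with $f(\sinh R) = 0$. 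Since $y^1 \geq \sinh R/\cos \varepsilon > \sinh R$, I conclude
\[
z^1 \geq f\!\left(\frac{\sinh R}{\cos \varepsilon}\right) = \frac{\sinh R\, \sin^2 \varepsilon}{\cos \varepsilon \bigl(\cosh R + \sqrt{\cosh^2 R - \sin^2 \varepsilon}\bigr)} > 0,
\]
a quantity depending only on $K$ and $\varepsilon$. Taking the minimum with the boundary bound $\sin \varepsilon \sinh \rho_0$ produces the desired $\delta$.

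The main technical obstacle I anticipate is calibrating the hyperbolic interior-ball radius $R(K)$ against the Euclidean osculating radius $1/K$ (the $-1$ ambient curvature must be accounted for), and handling interior points $z$ that are very close to $\partial M$, where the interior-ball construction degenerates and one falls back on the direct boundary estimate.
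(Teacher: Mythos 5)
Your outline shares the essential idea with the paper -- fit a hyperbolic ball inside the region enclosed by the tangent hyperplanes to $M$, show that this ball lies in the cone $C_1$, and invoke Lemma~\ref{height} -- and your explicit computation passing from the lower bound on $y^1$ to one on $z^1$ (via $z = y\cosh R + v\sinh R$ and the resulting monotone function $f(s)$) is correct and in fact cleaner than what the paper writes, which is terse on exactly that step. However, there is a genuine gap in your interior case that the paper's proof is designed to avoid, and your proposed patch (``fall back on the direct boundary estimate'') does not work as stated.

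The issue is your containment $B_R(y)\subset \hat M$. As $z$ approaches $\partial M$ the inward normal $-\nu$ becomes nearly tangent to $\partial B_0$ (indeed $\nu^0=0$ on $\partial M$), so the center $y=z\cosh R -\nu\sinh R$ of your candidate ball satisfies $y^0 > \cosh\rho_0$ and lies \emph{outside} $B_0$; the ball of radius $R(K)$ simply does not fit in $\hat M$ for interior points near the boundary, and the radius you could fit degenerates to $0$. Your fallback to the ``direct boundary estimate'' only gives a bound at points literally on $\partial M$; turning that into a quantitative bound in a fixed-width collar requires controlling the tangential gradient of $z^1$ on $M$, which you do not do. The paper sidesteps the whole dichotomy in two ways. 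First, it evaluates at a single point: the global minimum of $z^1$ on $M$. Second -- and this is the key move -- it takes the ball inside $\hat M' := \bigcap_{y\in M}\{w : \langle w,\tilde y\rangle\le 0\}$ rather than $\hat M$. The set $\hat M'$ is cut out only by the tangent half-spaces to $M$ and is \emph{not} intersected with $B_0$, so it contains $\hat M$, still satisfies $\hat M'\subset C_2=C_1$, and poses no obstruction to the ball even when $z$ is on or near $\partial M$. With this correction your two cases merge into one. Separately, note that the paper's interior-sphere radius depends not only on $\sup_M|A|$ but also on a uniform negative bound for $\langle \nu, e\rangle$ (obtained from Lemma~\ref{estimate normal} and Lemma~\ref{convexity via position normal relation}), because the graph representation over the hyperplane orthogonal to $e$ degenerates when $\langle\nu,e\rangle\to 0$; your claim that $R=R(K)$ depends only on the curvature bound skips this verification that the osculating ball actually stays inside $\hat M'$ globally and not just infinitesimally at $z$.
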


\begin{proof}
  Let $z \in M$ be a global minimum point of $z^1$. Let $e_1' = e_1 + \langle
  e_1, z \rangle z$. The vector $e'$ is a vector in $T_z \mathbb{H}^{n + 1}$,
  moreover $\langle e_1', e_1' \rangle = 1 + (z^1)^2 > 0$. Let now $e = e_1' /
  \sqrt{\langle e_1', e_1' \rangle}$.
  
  Since $z$ is global minimum, $e$ must be a normal vector of $M$ pointing
  inward of $\hat{M}$. Let $P$ be the hyperbolic subspace normal to $e$. Due
  to the previous lemma, it is possible to write $M$ as a graph around $z$
  over $P$. The graph function satisfies
  \[ w_{i j} = - h_{i j} \langle \nu, e \rangle + g_{i j} \langle w, e \rangle
     . \]
  We only have to show that $\langle \nu, e \rangle$ is bounded by a negative
  constant. Indeed,
\begin{align}
& \langle \nu, e \rangle \\
= & \tfrac{1}{\sqrt{\langle e_1', e_1' \rangle}} \langle \nu, e_1 +
\langle e_1, z \rangle z \rangle \\
= & \tfrac{1}{1 + (z^1)^2} \langle \nu, e_1 \rangle + \tfrac{z^1}{1 +
(z^1)^2} \langle \nu, z \rangle .
\end{align}
  By Lemma \ref{convexity via position normal relation}, $\langle \nu, z
  \rangle \leqslant 0$. So
  \[ \langle \nu, e \rangle \leqslant \tfrac{1}{1 + (z^1)^2} \langle \nu, e_1
     \rangle \leqslant \tfrac{- c_0}{1 + (z^1)^2}, \]
  where the constant $c_0$ is from Lemma \ref{estimate normal}. Define
  \[ \hat{M}' = \cap_{y \in M} \{z \in \mathbb{H}^{n + 1} : \langle z,
     \tilde{y} \rangle \leqslant 0\} . \]
  By the previous consideration, we have that $\hat{M}'$ satisfies an interior
  sphere condition at $z$ with radius of the ball depending only on the second
  fundamental form and $\langle \nu, e \rangle$. Due to $B_R (z) \subset
  \hat{M}' \subset C_1 = C_2$, from Lemma \ref{height}, we have that
  \[ z^1 \geqslant \delta > 0. \]
  
\end{proof}

\section{Moebius coordinates}\label{moebius coordinates}

Lambert and Scheuer {\cite{lambert-inverse-2016}} introduced the
\text{{\itshape{Moebius coordinates}}} for free boundary hypersurfaces in the
Euclidean unit ball.

\begin{definition}
  Let $D \subset \mathbb{R}^n$ be the unit disk. Define the Moebius
  coordinates for the pointed half ball
  \[ \mathbb{B}^+ =\mathbb{B}_1^+ (0) \backslash \{\partial_1 \} \]
  to be the diffeomorphism $f : D \times [1, \infty) \to \mathbb{B}^+$,
  \[ f (\xi, \lambda) = \tfrac{4 \lambda \xi + (1 + | \xi |^2) (\lambda^2 - 1)
     \partial_1}{(1 + \lambda^2) + (1 - \lambda)^2 | \xi |^2} . \]
\end{definition}

In this section, we generalized the Moebius coordinates of convex free
boundary hypersurfaces by {\cite{lambert-inverse-2016}} to the hyperbolic
case. To this end, we will make use of the ball model. In this way, the
Moebius coordinate is just composition with one more scaling with a scaling
factor $r_0$ defined by the relation $\tfrac{1 + r_0^2}{1 - r_0^2} = \cosh
\rho_0$. The Moebius coordinates are then given by the diffeomorphism
\[ \psi : \mathbb{B}^n \times [1, \infty) \to \mathbb{B} \to \mathbb{B}^{n +
   1} (r_0), r_0 < 1 \]
sending $(\xi, \lambda)$ to
\[ \psi (\xi, \lambda) = r_0 f (\xi) = r_0 \tfrac{4 \lambda \xi + (1 + | \xi
   |^2) (\lambda^2 - 1) \partial_1}{(1 + \lambda)^2 + (1 - \lambda)^2 | \xi
   |^2} . \]
The metric components of the metric $b$ under this new coordinate are $b
(\tfrac{\partial}{\partial \xi^i}, \tfrac{\partial}{\partial \xi^j})$, $b
(\tfrac{\partial}{\partial \xi^i}, \tfrac{\partial}{\partial \lambda})$ and $b
(\tfrac{\partial}{\partial \lambda}, \tfrac{\partial}{\partial \lambda})$. It
is easy to see that
\[ b (\tfrac{\partial}{\partial \xi^i}, \tfrac{\partial}{\partial \xi^j}) =
   \tfrac{4}{(1 - r_0^2 |f|^2)^2} \tfrac{\partial (r_0 f)}{\partial \xi^i}
   \cdot \tfrac{\partial (r_0 f)}{\partial \xi^j} = \frac{4 r_0^2}{(1 - r_0^2
   |f|^2)^2} \tfrac{\partial f}{\partial \xi^i} \cdot \tfrac{\partial
   f}{\partial \xi^j} \]
and similar formulas hold for other components. The dot $\cdot$ here
represents the Euclidean inner product. These components can be explicitly
calculated although what is really needed is $b (\tfrac{\partial}{\partial
\xi^i}, \tfrac{\partial}{\partial \lambda}) = 0$.

\begin{lemma}
  \label{lemma metric mobius}Let $\phi = (1 + \lambda)^2 + (1 - \lambda)^2 |
  \xi |^2$. The metric is
  \begin{equation}
    b = \phi_1^2 \mathrm{d} \lambda^2 + \phi_2^2 \delta_{i j},
  \end{equation}
  where
  \begin{equation}
    \phi_1 = \frac{4 (1 + | \xi |^2)}{\phi (1 - r_0^2 |f|^2)}, \phi_2 =
    \frac{8 r_0 \lambda}{\phi (1 - r_0^2 |f|^2)} . \label{metric components in
    Moebius coordinates}
  \end{equation}
\end{lemma}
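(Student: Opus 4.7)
The plan is to pull back the ball-model metric $b = \tfrac{4}{(1-|x|^2)^2}|\mathrm{d} x|^2$ under $\psi = r_0 f$ in two stages. Because the dilation $x \mapsto r_0 x$ simply rescales the flat metric by $r_0^2$ while leaving the conformal factor radial, one has
\[ \psi^* b \;=\; \frac{4 r_0^2}{(1 - r_0^2 |f|^2)^2}\, f^* |\mathrm{d} x|^2, \]
so the whole problem reduces to computing the pullback of the Euclidean metric along $f$. A key structural remark is that $\xi$ lives in the hyperplane $\mathbb{R}^n \subset \mathbb{R}^{n+1}$ orthogonal to $\partial_1$, so that $\xi \cdot \partial_1 = 0$ and $\hat{e}_i \cdot \partial_1 = 0$ for each coordinate direction $\hat{e}_i$ of $\mathbb{R}^n$; this is the algebraic origin of the orthogonality between $\partial_\lambda$ and $\partial_{\xi^i}$.

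Next I would write $f = N/\phi$ with $N = 4\lambda \xi + (1+|\xi|^2)(\lambda^2-1)\partial_1$, apply the quotient rule to get $\partial_\lambda f$ and $\partial_{\xi^i} f$, and then verify the three identities
\[ |\partial_\lambda f|^2 = \frac{4(1+|\xi|^2)^2}{\phi^2}, \quad \partial_\lambda f \cdot \partial_{\xi^i} f = 0, \quad \partial_{\xi^i} f \cdot \partial_{\xi^j} f = \frac{16\lambda^2}{\phi^2}\, \delta_{ij}. \]
The orthogonality follows from expanding $(\phi\, \partial_\lambda N - N\, \partial_\lambda \phi) \cdot (\phi\, \partial_{\xi^i} N - N\, \partial_{\xi^i} \phi)$ and systematically using the splitting $N = (\text{component in } \mathbb{R}^n) \oplus (\text{component in } \mathbb{R}\partial_1)$ together with the vanishing of all cross-pairings between these two orthogonal subspaces. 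The diagonal identities, and the proportionality of the $\xi$-block to $\delta_{ij}$, arise from the symmetric role of $\xi$ inside $\mathbb{R}^n$. Multiplying the three identities by the conformal factor $\tfrac{4 r_0^2}{(1 - r_0^2 |f|^2)^2}$ then yields the form $b = \phi_1^2\, \mathrm{d}\lambda^2 + \phi_2^2\, \delta_{ij}\, \mathrm{d}\xi^i\mathrm{d}\xi^j$ with $\phi_1, \phi_2$ as in \eqref{metric components in Moebius coordinates}.

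I expect the main obstacle to be purely bookkeeping: in the expansion for $\partial_\lambda f \cdot \partial_{\xi^i} f$ one must track $(1+|\xi|^2)$, $(\lambda^2-1) = (\lambda-1)(\lambda+1)$ and the two summands of $\phi$ through several terms before they collapse. However, the Euclidean ($r_0 = 1$) version of exactly this computation is carried out by Lambert--Scheuer \cite{lambert-inverse-2016}; the only additional content in the hyperbolic setting is the conformal factor $\tfrac{4 r_0^2}{(1 - r_0^2 |f|^2)^2}$, which passes uniformly through every inner product and so enters cleanly as a multiplicative prefactor in both $\phi_1^2$ and $\phi_2^2$. I would therefore cite their computation for the diagonal structure of $f^*|\mathrm{d} x|^2$ and only perform the conformal multiplication in detail.
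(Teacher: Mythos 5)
Your approach is essentially the paper's: the paper disposes of this lemma with ``by direct but tedious calculation'' and cites Lambert--Scheuer for the vanishing of the cross term, and your proposal simply makes that calculation explicit; the three identities you state for $f^*|\mathrm{d}x|^2$ (namely $|\partial_\lambda f|^2 = 4(1+|\xi|^2)^2/\phi^2$, $\partial_\lambda f\cdot\partial_{\xi^i}f=0$, and $\partial_{\xi^i}f\cdot\partial_{\xi^j}f=16\lambda^2\delta_{ij}/\phi^2$) are all correct. One caveat on your final step: multiplying $|\partial_\lambda f|^2$ by the conformal factor $4r_0^2/(1-r_0^2|f|^2)^2$ yields $\phi_1 = \frac{4 r_0 (1+|\xi|^2)}{\phi(1-r_0^2|f|^2)}$, which carries an extra factor of $r_0$ relative to the lemma's displayed $\phi_1$ (your $\phi_2$ matches exactly); your computation is the correct one and the statement appears to contain a typo, so your closing assertion of exact agreement with \eqref{metric components in Moebius coordinates} should be qualified rather than the computation changed.
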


\begin{proof}
  By direct but tedious calculation. The components $b
  (\tfrac{\partial}{\partial \xi^i}, \tfrac{\partial}{\partial \lambda}) = 0$
  also follows from {\cite{lambert-inverse-2016}}.
\end{proof}

\begin{proposition}
  \label{graphical}Let $y : M \to (\mathbb{B}^{n + 1}, b)$ the the embedding
  of a strictly convex free boundary hypersurface $M$. Then $M$ can be written
  as a graph in Moebius coordinates around $\partial_1$ that is
  \[ x = \psi (\xi, u (\xi)) . \]
\end{proposition}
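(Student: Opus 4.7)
The plan is to show that the projection map $\Pi : M \to \mathbb{B}^n$ sending $\psi(\xi, \lambda) \in M$ to $\xi$ is a diffeomorphism; the inverse then furnishes the graph function $u := \lambda \circ \Pi^{-1}$. The argument parallels the Euclidean treatment of Lambert--Scheuer \cite{lambert-inverse-2016} and uses the convexity estimates collected in Section \ref{geometry from convexity} to handle the hyperbolic setting.

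First, $\Pi$ is well defined: Corollary \ref{M in pointed half ball} places $M$ inside $B^+ \setminus \{p_1\}$, which in the Poincar{\'e} ball model corresponds precisely to the image of $\psi$, so every $y \in M$ receives unique Moebius coordinates $(\xi, \lambda)$ and $\Pi$ is smooth. The crucial step is transversality of $\partial_\lambda \psi$ with $TM$: at every $p \in M$ the vector $\partial_\lambda \psi|_p$ is not tangent to $M$. Since $b(\partial_{\xi^i}, \partial_\lambda) = 0$ by Lemma \ref{lemma metric mobius}, this reduces to $b(\nu, \partial_\lambda \psi) \neq 0$. Differentiating the explicit formula for $\psi$ shows that $\partial_\lambda \psi$ has a dominant component along $\partial_1$, whose inner product with $\nu$ is then controlled by $\nu^1$. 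Combining the normal bound $\nu^1 \leqslant -c_0 < 0$ from Lemma \ref{estimate normal} with the height control $z^1 \geqslant \delta > 0$ from Lemma \ref{height estimate} yields a uniform lower bound on $|b(\nu, \partial_\lambda \psi)|$, so the inverse function theorem makes $\Pi$ a local diffeomorphism.

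For global injectivity, suppose distinct $p, q \in M$ project to the same $\xi$. They both sit on the $\lambda$-curve $c_\xi$, which emanates from the pole $p_1 \in \partial B_0 \setminus M$ and reaches the equator $\{z^1 = 0\}$; by the strict convexity of $M$ together with the position-normal inequality of Lemma \ref{convexity via position normal relation} applied at both $p$ and $q$ with test points on $c_\xi$, such a curve cannot meet the strictly convex hypersurface $M$ in more than one point. This contradiction yields injectivity. Being then an injective local diffeomorphism between manifolds of equal dimension, $\Pi(M)$ is open in $\mathbb{B}^n$; it is also closed, since any convergent sequence $\xi_k \to \xi_\infty$ in $\Pi(M)$ has preimages in $M$ that remain at definite distance from $p_1$ by Lemma \ref{height estimate}, so a subsequence converges to some $p_\infty \in M$ with $\Pi(p_\infty) = \xi_\infty$. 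Therefore $\Pi(M) = \mathbb{B}^n$, $\Pi$ is a diffeomorphism, and $u := \lambda \circ \Pi^{-1}$ is the required graph function.

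The main obstacle is the transversality step. In the Euclidean case of \cite{lambert-inverse-2016} the analogous inequality follows from the conformal structure of the Moebius inversion, whereas in hyperbolic space one has to extract it from the explicit Moebius formula together with the position, normal, and height estimates of Section \ref{geometry from convexity}, which were assembled precisely with this application in mind. Once transversality is secured, the injectivity/surjectivity/properness steps are essentially routine.
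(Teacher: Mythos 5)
Your overall framing (reduce to transversality of $\partial_\lambda\psi$ with $TM$, then upgrade to a global graph) matches the paper, but the transversality step --- which you yourself identify as the main obstacle --- is not actually proved, and the first-order estimates you invoke do not suffice for it. Differentiating $\psi$ gives
\[
\tfrac{\partial \psi}{\partial \lambda} = c(\xi,\lambda)\left(f - \tfrac{\lambda^2+1}{\lambda^2-1}\,\partial_1\right), \qquad c>0 \text{ for } \lambda>1,
\]
so $\partial_\lambda\psi$ is \emph{not} dominantly along $\partial_1$: it is a fixed combination of $\partial_1$ and the position vector $x=r_0 f$, and one must show $\zeta(x):=\langle x-\Lambda\partial_1,N\rangle>0$ with $\Lambda=r_0\tfrac{1+\lambda^2}{\lambda^2-1}$. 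The two contributions compete: $\langle x,N\rangle$ is (up to positive factors) $\nu^0<0$ by Lemma \ref{N0 less than 0}, while $-\Lambda\langle\partial_1,N\rangle>0$ by Lemma \ref{ei N less than 0}, and the bounds $\nu_1\leqslant -c_0$ (Lemma \ref{estimate normal}) and $z^1\geqslant\delta$ (Lemma \ref{height estimate}) give no pointwise comparison between their magnitudes. The paper resolves this with a second-order convexity argument: assuming $\zeta\leqslant 0$ somewhere, the free boundary condition $\langle x,N\rangle=0$ on $\partial M$ together with $\langle\partial_1,N\rangle<0$ forces an interior minimum $p$ of $\zeta$; there $\nabla\zeta=0$ yields $A(X^{\top})=\Lambda Y$ with $X=x-\Lambda\partial_1$, and a computation gives
\[
\langle A(X^{\top}),X^{\top}\rangle \leqslant \langle\partial_1,N\rangle\,\Lambda\Bigl(\textstyle\sum_{i\geq 2}|x^i|^2+(\Lambda-x^1)^2\Bigr)<0,
\]
contradicting strict convexity. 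This use of the second fundamental form at an interior extremum is the essential idea, and it is missing from your argument; note also that you never establish $\langle\partial_1,N\rangle<0$ (Lemma \ref{ei N less than 0}), which is itself proved by a parallel maximum-type argument and is needed both for the boundary sign of $\zeta$ and for the final contradiction.

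A secondary issue: your global injectivity step ("the position-normal inequality applied at $p$ and $q$ with test points on $c_\xi$ shows the $\lambda$-curve meets $M$ once") is asserted rather than argued; Lemma \ref{convexity via position normal relation} constrains $M$ by hyperbolic half-spaces, whereas the $\lambda$-curves are circles through the poles, so some work is needed to convert one into the other. The paper is admittedly terse here too (it passes from the everywhere-transversality to the graph statement via the implicit function theorem), so this is a lesser concern than the transversality gap above.
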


Before we prove this proposition, we give the following lemma as a preparation
where the idea of the proof is also used in proof of Proposition
\ref{graphical}.

\begin{lemma}
  \label{ei N less than 0}Suppose that $M$ is convex with free boundary in
  $B_0$, in the ball model we have
  \[ \langle \partial_1, N \rangle < 0, \]
  where $N$ is the unit normal to $M$ in the Poincar{\'e} ball model.
\end{lemma}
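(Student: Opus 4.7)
My plan is to transfer the computation to the hyperboloid model via the isometric stereographic projection $\pi$, reducing the ball model inequality $\langle \partial_1, N\rangle < 0$ to a statement about the Minkowski components $\nu^0, \nu^1$ of the hyperboloid normal, which have already been pinned down by earlier results in this section.

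I would start by writing $\langle \partial_1, N\rangle_b = \langle \pi_* \partial_1, \nu\rangle$ (using that $\pi$ is an isometry between $(\mathbb{B}^{n+1}, b)$ and the hyperboloid with the induced Minkowski metric), then feed in the explicit formula for $\pi_* \partial_1$ derived in Section~\ref{geometry of hypersurfaces}, and use the tangency identity $\sum_j z^j \nu^j = z^0 \nu^0$ coming from $\langle z, \nu \rangle = 0$ to collapse the cross terms. The calculation condenses to
\[
\langle \partial_1, N\rangle_b = (1+z^0)\nu^1 - z^1 \nu^0,
\]
so it remains to show the right hand side is strictly negative.

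For the sign, I would combine two ingredients already in hand. First, Lemma~\ref{convexity via position normal relation} applied with the test point $p_1 = e_0 \cosh\rho_0 + e_1 \sinh\rho_0$, which lies in $\overline{\hat{M}}$ since $\nu^1 \leq -c_0 < 0$ globally by Lemma~\ref{estimate normal} forces $\hat{M}$ to sit on the $+e_1$ side of $M$. This yields $\sinh\rho_0 \cdot \nu^1 \leq \cosh\rho_0 \cdot \nu^0$, i.e.\ $\nu^1 \leq \nu^0 \coth \rho_0$. Second, Lemma~\ref{N0 less than 0} provides $\nu^0 < 0$. Substituting the first into our expression produces the upper bound $\nu^0\bigl[(1+z^0)\coth\rho_0 - z^1\bigr]$, and finishing the argument reduces to the elementary hyperbolic inequality $(1+\cosh \rho_z)\coth \rho_0 > \sinh \rho_z$ valid whenever $\rho_z \leq \rho_0$ (this is equivalent to $\tanh(\rho_z/2)\tanh \rho_0 < 1$).

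The main subtlety I anticipate is the precise placement of $p_1$ needed to legitimately invoke Lemma~\ref{convexity via position normal relation}: one should argue that $p_1$ lies in $\Sigma$, or at least is a limit of interior points of $\hat{M}$ through which the position-normal inequality passes. Once that book-keeping is carried out, the rest is a purely algebraic chain of the estimates already established.
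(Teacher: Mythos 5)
Your argument is correct in substance but follows a genuinely different route from the paper. The paper's proof is local: it assumes $\langle \partial_1, N\rangle \geqslant 0$ somewhere, notes that the free boundary condition and convexity of $\partial M$ in $\partial B_0$ force the quantity to be negative on $\partial M$, and then at an interior maximum computes $\langle A(\partial_1^{\top}), \partial_1^{\top}\rangle = u[\langle x, N\rangle - x^1\langle \partial_1, N\rangle] < 0$ (using $\langle x,N\rangle = \nu^0 < 0$ from Lemma~\ref{N0 less than 0} and $x^1>0$), contradicting convexity. Your proof instead is a global algebraic one: the identity $\langle \pi_*\partial_1,\nu\rangle = (1+z^0)\nu^1 - z^1\nu^0$ is correct, and the chain $\nu^1 \leqslant \coth\rho_0\,\nu^0$, $\nu^0<0$, $z^1\leqslant\sinh\rho_z$, $(1+\cosh\rho_z)\coth\rho_0>\sinh\rho_z$ closes it. This buys a quantitative statement (an explicit negative upper bound in terms of $c_0$ from Lemma~\ref{estimate normal}) at the cost of invoking more of the earlier machinery; the paper's argument needs only convexity and Lemma~\ref{N0 less than 0}.

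Two points need attention. First, the gap you flagged is real but fillable: to apply Lemma~\ref{convexity via position normal relation} with $z=p_1$ you must place $p_1$ in $\Sigma$. This follows from $\nu^1\leqslant -c_0<0$ on $\partial M$: along $\partial M$ the vector $\nu$ is the outward normal of the spherical part $U^+=\Sigma\cap\partial B_0$, so $z^1|_{\partial B_0}$ has strictly negative outward derivative along $\partial U^+$, hence attains its maximum over $\overline{U^+}$ at an interior critical point of $z^1|_{\partial B_0}$, which can only be $p_1$; thus $p_1\in U^+\subset\Sigma$. Second, Lemma~\ref{N0 less than 0} gives $\nu^0<0$ only at interior points; on $\partial M$ the free boundary condition forces $\nu^0=0$, so your bound $\nu^0[(1+z^0)\coth\rho_0-z^1]$ degenerates to $0$ there and does not give strict negativity. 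At boundary points you should instead conclude directly from $(1+z^0)\nu^1 - z^1\cdot 0 = (1+\cosh\rho_0)\nu^1\leqslant -(1+\cosh\rho_0)c_0<0$. With these two repairs the proof is complete.
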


\begin{proof}
  Since $M$ is convex and that $\partial M$ is convex in $\partial B_0$ or
  equivalently in $\partial \mathbb{B} (r_0)$, $\langle \partial_1, N \rangle
  < 0$ along $\partial M$. Assume on the contrary that $\langle \partial_1, N
  \rangle \geqslant 0$, then the maximum of $\langle \partial_1, N \rangle$
  occurs in the interior of $M$. Let $y = y^i \partial_i$ be any tangent
  vector fields on $M$, at a maximum point $p$ of $\langle \partial_1, N
  \rangle$,
  \[ D_y \langle \partial_1, N \rangle = \langle N, D_y \partial_1 \rangle +
     \langle \partial_1, D_y N \rangle = 0. \]
  Since
  \[ D_j \partial_i = z^j \partial_i + z^i \partial_j - \delta_{i j} \hat{z},
  \]
  so
  \begin{equation}
    \langle N, D_y \partial_1 \rangle = y_j z^j \langle \partial_1, N \rangle
    - y_1 \langle z, N \rangle =: \langle Y, y \rangle, \label{ei derivative}
  \end{equation}
  where $Y = u^{- 2} \hat{z} \langle \partial_1, N \rangle - u^{- 2} \langle
  \hat{z}, N \rangle \partial_1$. So
  \[ \langle \partial_1, D_y N \rangle = - \langle y, Y \rangle \]
  at $p$. Note that $Y$ is orthogonal to $N$, letting $\partial_1^{\top}$ be
  the projection of $\partial_1$ to the tangent space of $M$, so
  \[ A (\partial_1^{\top}) = - Y, \]
  and
  \[ \langle A (\partial_1^{\top}), \partial_1^{\top} \rangle = - \langle Y,
     \partial_1 \rangle = u [\langle x, N \rangle - x^1 \langle \partial_1, N
     \rangle] < 0. \]
  Due the to the assumption at $p$, $\langle \partial_1, N \rangle \geqslant
  0$ and recall that $x^1 > 0$, so
  \[ \langle A (\partial_1^{\top}), \partial_1^{\top} \rangle < 0. \]
  However $M$ is convex, $A$ is positive. The contradiction leads to $\langle
  \partial_1, N \rangle < 0$.
\end{proof}

Now we turn to the proof of Proposition \ref{graphical}.

\begin{proof}[Proof of Proposition \ref{graphical}]
  First, by Lemma \ref{M in pointed half ball} Moebius coordinate is well
  defined on $M$. By implicit function theorem, we have to show that
  \[ \langle \tfrac{\partial \psi}{\partial \lambda}, N \rangle < 0. \]
  Note that
  \begin{equation}
    \tfrac{\partial \psi}{\partial \lambda} = r_0 \tfrac{\partial f}{\partial
    \lambda} = r_0 \frac{(1 + | \xi |^2) (1 - \lambda^2)}{\lambda ((1 +
    \lambda)^2 + (1 - \lambda)^2 | \xi |^2)} (f - \tfrac{\lambda^2 +
    1}{\lambda^2 - 1} \partial_1) .
  \end{equation}
  Since $\lambda > 1$ by Lemma \ref{height estimate} and $x = r_0 f$, we are
  reduce to show
  \[ \zeta (x) : = \langle x - r_0 \tfrac{1 + \lambda^2}{\lambda^2 - 1}
     \partial_1, N \rangle > 0. \]
  Now we follow same lines of arguments as in Lemma \ref{ei N less than 0} to
  show the above inequality. Suppose on the contrary that $\zeta (x) \leqslant
  0$. Since $\langle x, N \rangle$ vanishes along $\partial M$,
  \[ \zeta (x) = - \langle r_0 \tfrac{1 + \lambda^2}{\lambda^2 - 1}
     \partial_1, N \rangle > 0 \]
  along $\partial M$. So the minimum of $\zeta$ occurs at an interior point
  $p$ of $M$ and at $p$, $\nabla \zeta = 0$. Letting $\Lambda = r_0 \tfrac{1 +
  \lambda^2}{\lambda^2 - 1}$ and $X = x - \Lambda \partial_1$, then at $p$,
  \[ \langle D_y X, N \rangle + \langle X, D_y N \rangle = 0. \]
  Now note that $\langle D_y x, N \rangle$ vanishes. Indeed, this follows from
  $D_j (x^i \partial_i) = \partial_j + x^i D_j \partial_i$, and
\begin{align}
D_y x & = y + y^j x^l D_j e_l \\
& = y + y^j x^i (z^j \partial_i + z^i \partial_j - \delta_{i j} \hat{z})
\\
& = y + y^j z^j x + y x^i z^i - x^i y^i \hat{z} \\
& = y (1 + x^i z^i) .
\end{align}
  So
  \[ \langle X, D_y N \rangle = - \langle D_y X, N \rangle = \Lambda \langle
     D_y \partial_1, N \rangle = \Lambda \langle y, Y \rangle \]
  which gives
  \[ A (X^{\top}) = \Lambda Y, \]
  again here $Y = u^{- 2} \hat{z} \langle \partial_1, N \rangle - u^{- 2}
  \langle \hat{z}, N \rangle \partial_1$.
  
  We compute now
\begin{align}
& \langle A (X^{\top}), X^{\top} \rangle \\
= & \langle \Lambda Y, X \rangle \\
= & \tfrac{\Lambda}{u^2} \langle x \langle \partial_1, N \rangle - \langle
x, N \rangle \partial_1, x - \Lambda \partial_1 \rangle \\
= & \Lambda [|x|^2 \langle \partial_1, N \rangle + \langle x, N \rangle
(\Lambda - x^1) - \Lambda x^1 \langle \partial_1, N \rangle] .
\end{align}
  Since at $p$, $\zeta (x) \leqslant 0$ implies that $\langle x, N \rangle
  \leqslant \Lambda \langle \partial_1, N \rangle$ and for all of $M$,
  $\Lambda - x^1 > 0$, we have
\begin{align}
\langle A (X^{\top}), X^{\top} \rangle \leqslant & \Lambda [|x|^2 \langle
\partial_1, N \rangle + \Lambda (\Lambda - x^1) \langle \partial_1, N
\rangle - \Lambda x^1 \langle \partial_1, N \rangle] \\
= & \Lambda \langle \partial_1, N \rangle (|x|^2 + \Lambda^2 - 2 \Lambda
x^1) \\
= & \langle \partial_1, N \rangle \Lambda \left( \sum_2^{n + 1} |x^i |^2 +
(\Lambda - x^1)^2 \right) .
\end{align}
  The factor in the big bracket on the left is obviously positive, so $\langle
  A (X^{\top}), X^{\top} \rangle < 0$ by the previous lemma. This again
  contradicts the convexity of $M$.
\end{proof}

Now we can reduce the inverse mean curvature flow to a scalar parabolic
equation.

\begin{theorem}
  Let $F$ be a solution of \eqref{imcf} for some $[0, \varepsilon)$ such that
  for all $t \in [0, \varepsilon)$ the flow $M_t$ are graphs in Moebius
  coordinates in the pointed half ball $B_0^+ \backslash p_1$, that is
  \[ M_t = \{(\xi (t, \omega), u (t, \xi)) : (t, \omega) \in [0, \varepsilon)
     \times \mathbb{D}\} . \]
  Adopting the notations in Lemma \ref{lemma metric mobius}, then $u$ solves a
  parabolic partial differential equation with Neumann boundary value
  condition:
  \begin{equation}
    \left\{\begin{array}{lll}
      \tfrac{\partial}{\partial t} u & = - \tfrac{v}{H} & \text{ in } (0,
      \varepsilon) \times \mathbb{D}\\
      \sum_i \xi^i \tfrac{\partial u}{\partial \xi^i} & = 0 & \text{ on } [0,
      \varepsilon) \times \partial \mathbb{D}\\
      u (\cdot, 0) & = u_0, & 
    \end{array}\right. \label{graphical imcf}
  \end{equation}
  where $v$ is defined to be
  \begin{equation}
    v = \tfrac{1}{4} \phi (1 - r_0^2 |f|^2) \sqrt{\tfrac{1}{(1 + | \xi |^2)^2}
    + \tfrac{1}{4 r_0^2 \lambda^2} \sum_i (\tfrac{\partial u}{\partial
    \xi^i})^2} .
  \end{equation}
\end{theorem}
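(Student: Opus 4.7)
\medskip
\noindent\textbf{Proof proposal.} The strategy is to use the graphical description from Proposition \ref{graphical}, write $F(t,\omega)=\psi(\omega,u(t,\omega))$ with $\omega\in\mathbb{D}$, and then unpack the purely normal flow $\partial_t F=\nu/H$ in the Moebius frame. Since the inverse mean curvature flow is determined geometrically only up to tangential diffeomorphisms, I will choose the reparameterization that keeps $\omega$ constant along material trajectories, so that $\partial_t F=u_t\,\psi_\lambda$. Taking the inner product of this identity with $\nu$ kills all tangential freedom and yields the single scalar relation
\[
u_t\,\langle\psi_\lambda,\nu\rangle_b=\tfrac{1}{H}.
\]
Thus the entire proof reduces to identifying $\langle\psi_\lambda,\nu\rangle_b$ with $-1/v$ and then reading off the boundary condition.

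For the normal vector, the tangent vectors to $M_t$ in the graphical parameterization are $F_i=\psi_{\xi^i}+u_i\psi_\lambda$. By Lemma \ref{lemma metric mobius} the metric is block diagonal, $b=\phi_1^2\,d\lambda^2+\phi_2^2\,\delta_{ij}\,d\xi^i d\xi^j$, and a vector of the form $\nu=A(\partial_\lambda-(\phi_1^2/\phi_2^2)u_i\partial_{\xi^i})$ is $b$-orthogonal to every $F_i$. Normalizing gives $A^{-2}=\phi_1^2(1+(\phi_1^2/\phi_2^2)|Du|^2)$, and the sign of $A$ is fixed by demanding that $\nu$ point out of the enclosed region $\hat M$; since $\hat M$ contains the tip $p_1=r_0\partial_1$, which in Moebius coordinates corresponds to $\lambda\to\infty$, the outward normal points toward smaller $\lambda$ and $A<0$. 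Therefore
\[
\langle\psi_\lambda,\nu\rangle_b=A\phi_1^2=-\frac{\phi_1}{\sqrt{1+\tfrac{\phi_1^2}{\phi_2^2}|Du|^2}}.
\]
Substituting $\phi_1=4(1+|\xi|^2)/[\phi(1-r_0^2|f|^2)]$ and $\phi_2=8r_0\lambda/[\phi(1-r_0^2|f|^2)]$, so that $\phi_1/\phi_2=(1+|\xi|^2)/(2r_0\lambda)$, a direct manipulation gives
\[
\frac{\sqrt{1+(\phi_1/\phi_2)^2|Du|^2}}{\phi_1}=\tfrac{1}{4}\phi(1-r_0^2|f|^2)\sqrt{\tfrac{1}{(1+|\xi|^2)^2}+\tfrac{|Du|^2}{4r_0^2\lambda^2}}=v,
\]
so combining with the scalar relation above yields $u_t=-v/H$, which is the claimed evolution equation.

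For the Neumann boundary condition, I will use the defining property of the Moebius parameterization that the fibre $\{\xi\}\times[1,\infty)$ with $|\xi|=1$ is mapped by $\psi$ into $\partial B_0$; hence $\partial_\lambda\psi$ is tangent to $\partial B_0$ along $\partial\mathbb{D}\times[1,\infty)$, and the tangent space of $\partial B_0$ there is spanned by $\partial_\lambda$ together with the vectors $\partial_{\xi^j}$ tangent to $\partial\mathbb{D}$. By the block-diagonal form of $b$ the outward $b$-unit conormal $\eta$ is therefore proportional to the radial vector $\xi^i\partial_{\xi^i}$. The free boundary condition $\langle\nu,\eta\rangle=0$ then reads
\[
0=b\!\left(A(\partial_\lambda-\tfrac{\phi_1^2}{\phi_2^2}u_i\partial_{\xi^i}),\;\xi^j\partial_{\xi^j}\right)=-A\,\phi_1^2\sum_i\xi^i u_i,
\]
so $\sum_i\xi^i u_i=0$ on $\partial\mathbb{D}$. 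The main obstacle is the bookkeeping in step~2, namely reducing the ratio $\sqrt{1+(\phi_1/\phi_2)^2|Du|^2}/\phi_1$ to the explicit expression for $v$; conceptually the delicate point is step~3, where the Neumann condition hinges on the invariance of $\partial B_0$ under the Moebius foliation $\lambda\mapsto\psi(\xi,\lambda)$ for $|\xi|=1$, which one must extract from the explicit form of $\psi$.
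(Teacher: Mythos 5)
Your proposal is correct and follows essentially the same route as the paper: project the flow velocity onto the normal, compute the normal of the graph in the block-diagonal Moebius metric (your $A(\partial_\lambda-(\phi_1^2/\phi_2^2)u_i\partial_{\xi^i})$ is a positive multiple of the paper's $\tilde\nu=\phi_1^2\,\partial u-\phi_2^2\partial_\lambda$), and identify $\langle\partial_\lambda,\nu\rangle=-1/v$. Your explicit derivation of the Neumann condition from $|f(\xi,\lambda)|=1$ on $|\xi|=1$ is a detail the paper leaves implicit, but it is not a different method.
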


\begin{proof}
  Let $X = (\xi, \lambda)$, because of Proposition \ref{graphical}, the
  inverse mean curvature flow is equivalent to
  \begin{equation}
    (\partial_t X)^{\bot} = \tfrac{1}{H} \nu
  \end{equation}
  up to tangential diffeomorphisms where $\nu$ points downward of
  $\lambda$-direction, so
  \[ \tfrac{1}{H} = \langle \partial_t X, \nu \rangle = \langle
     \tfrac{\partial}{\partial \lambda}, \nu \rangle \tfrac{\partial}{\partial
     t} u. \]
  We write $b_{\lambda \lambda} = \phi_1^2$ and $b_{i j} = \phi_2^2 \delta_{i
  j}$, the the vector $\tilde{\nu} = \phi_1^2 \partial u - \phi_2^2
  \partial_{\lambda}$ is normal to the graph, where $\partial u = \sum_i
  \tfrac{\partial u}{\partial \xi^i} \tfrac{\partial}{\partial \xi^i}$. So
  \begin{equation}
    \langle \tfrac{\partial}{\partial \lambda}, \nu \rangle = \tfrac{1}{|
    \tilde{\nu} |} \langle \tfrac{\partial}{\partial \lambda}, \tilde{\nu}
    \rangle = - \tfrac{1}{| \tilde{\nu} |} \phi^2_1 \phi_2^2 .
  \end{equation}
  We see that $v = \tfrac{| \tilde{\nu} |}{\phi_1^2 \phi_2^2}$, and the
  equation \eqref{graphical imcf} now easily follows from $\partial_t \xi^i
  \equiv 0$.
\end{proof}

We partially calculate the mean curvature $H$ in terms of $u$ and the metric
in \eqref{metric components in Moebius coordinates}.

\begin{lemma}
  Let $\tilde{\nu}$, $v$ be as in the previous theorem, the mean curvature $H$
  of a graph of some function $u$ in Moebius coordinates is given by
  \begin{equation}
    H = \tfrac{1}{v} g^{i j} \partial_i \partial_j u + F', \label{mean
    curvature expansion}
  \end{equation}
  where $F'$ contains lower order terms and depends on $x$, $u$ and $\partial
  u$.
\end{lemma}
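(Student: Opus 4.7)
The plan is to compute $H$ directly from the graphical parametrization $X(\xi)=\psi(\xi,u(\xi))$ by exploiting the diagonal form of the Moebius metric $b=\phi_1^2\mathrm{d}\lambda^2+\phi_2^2\delta_{ij}$ established in Lemma \ref{lemma metric mobius}. The decisive structural observation is that $\partial_i\partial_j X=u_{ij}\partial_\lambda$, so only the $\partial_\lambda$-component of the ambient Hessian of $X$ can carry second derivatives of $u$; everything else should land in the remainder $F'$ depending only on $\xi$, $u$, and $\partial u$.

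First I would set up the tangent frame $\partial_i X=\partial_{\xi^i}+u_i\partial_\lambda$. Diagonality of $b$ immediately confirms that the representative $\tilde\nu=\phi_1^2\partial u-\phi_2^2\partial_\lambda$ from the preceding theorem is $b$-orthogonal to each $\partial_i X$, and a short calculation yields $|\tilde\nu|_b^2=\phi_1^2\phi_2^2(\phi_2^2+\phi_1^2|Du|^2)$. Comparing with the explicit expression for $v$ using \eqref{metric components in Moebius coordinates} matches this with $|\tilde\nu|_b=v\,\phi_1^2\phi_2^2$, which in turn gives the clean identity $b(\partial_\lambda,\nu)=-\tfrac{1}{v}$ and the induced metric $g_{ij}=\phi_2^2\delta_{ij}+\phi_1^2 u_i u_j$ whose inverse I can write out by Sherman–Morrison.

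With these in hand, I would compute the second fundamental form as $h_{ij}=-b(\bar\nabla_{\partial_i X}\partial_j X,\nu)$, where $\bar\nabla$ is the Levi-Civita connection of $b$. Expanding $\bar\nabla_{\partial_i X}\partial_j X=u_{ij}\partial_\lambda+\Gamma(\partial_i X,\partial_j X)$, only the first summand carries second derivatives of $u$; pairing with $\nu$ it contributes $-u_{ij}/v$ to $h_{ij}$, while the Christoffel piece depends only on $\xi$, $u$, and $\partial u$ since $b$ is smooth and $\Gamma$ is built from $\partial b$ and the first derivatives of $X$. Tracing against $g^{ij}$ then produces $H=\tfrac{1}{v}g^{ij}\partial_i\partial_j u+F'$ with $F'=g^{ij}b(\Gamma(\partial_i X,\partial_j X),\nu)$, which is precisely the form asserted in \eqref{mean curvature expansion}.

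The main obstacle is sign bookkeeping: the orientation convention for the outward normal $\nu$ of \eqref{imcf}, the specific representative $\tilde\nu$ above (which points in the $-\partial_\lambda$ direction), and the convention for $h_{ij}$ must all be aligned so that the leading coefficient in \eqref{mean curvature expansion} comes out as $+\tfrac{1}{v}$ rather than its negative. Once the signs are pinned down, the rest of the argument is a routine Christoffel-symbol contraction using the explicit expressions \eqref{metric components in Moebius coordinates}, and it need not be written out explicitly since only the structural form of $F'$ is required in the applications that follow.
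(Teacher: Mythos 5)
Your proposal is correct and follows essentially the same route as the paper: both identify the term carrying $\partial_i\partial_j u$ (for you the piece $u_{ij}\partial_\lambda$ of $\bar\nabla_{\partial_i X}\partial_j X$; for the paper the term $\langle D_{\partial_i}(\phi_1^2\,\partial u),\partial_j\rangle$ in the divergence form of $H$) as the unique source of second derivatives of $u$, relegate all Christoffel contributions to $F'$, and the two computations are interchangeable via metric compatibility $\langle D_{e_i}\nu,e_j\rangle=-\langle\nu,D_{e_i}e_j\rangle$. One small slip worth fixing: with your own conventions $h_{ij}=-b(\bar\nabla_{\partial_i X}\partial_j X,\nu)$ and $b(\partial_\lambda,\nu)=-\tfrac{1}{v}$, the second-order piece contributes $+u_{ij}/v$ rather than $-u_{ij}/v$, which is precisely the sign needed for \eqref{mean curvature expansion} and which you correctly anticipate in your concluding sentence.
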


\begin{proof}
  Let the inverse metric of the graph be $g^{i j}$, so the mean curvature of a
  graph of the function $u$ is given by
  \begin{equation}
    H = g^{i j} \tfrac{1}{| \tilde{\nu} |} \langle D_{\partial_i + u_i
    \partial_{\lambda}} (\phi_1^2 \partial u - \phi_2^2 \partial_{\lambda}),
    \partial_j + u_j \partial_{\lambda} \rangle .
  \end{equation}
  The terms contains $\partial_i \partial_j u$ is $\tfrac{1}{| \tilde{\nu} |}
  \langle D_{\partial_i} (\phi_1^2 \partial u), \partial_j \rangle$, it is
  easy to see \eqref{mean curvature expansion} holds.
\end{proof}

\section{Convergence to totally geodesic disks}\label{important estimates}

We define $\bar{T}$ to be the largest time such that for all $t \in [0,
\bar{T})$ the hypersurface $M_t$ is strictly convex and $T^{\ast}$ be the
largest time such that the flow exists. The for all $t \in [0, T^{\ast})$,
$M_t$ is mean convex. Obviously, $\bar{T} \leqslant T^{\ast}$.

\begin{proposition}
  Let $F (t, \xi)$ be a solution to the inverse mean curvature flow on the
  interval $[0, \bar{T})$. Then the principal curvature are bounded, that is
  \begin{equation}
    \sup_{M_t} |A| \leqslant C, \label{curvature bound}
  \end{equation}
  for all $t \in [0, \bar{T})$ where $C$ depends only on the radius $\rho_0$,
  initial position of $M_0$ and the mean curvature of the initial hypersurface
  $M_0$.
\end{proposition}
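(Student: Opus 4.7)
The plan is to bound the mean curvature $H$ uniformly from above and then convert this into the desired bound on $|A|$ via strict convexity. The starting point is the evolution equation from Lemma \ref{evolution H normal derivative H},
\[
\bigl(\partial_t - \tfrac{1}{H^2}\Delta\bigr)\log H = -\tfrac{|\nabla H|^2}{H^4} - \tfrac{-n + |A|^2}{H^2},
\]
together with the boundary identity $\nabla_\eta \log H = -\coth \rho_0$.

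First I would rule out boundary maxima of $\log H$: by the parabolic Hopf boundary-point lemma, any spatial maximum of $\log H$ attained on $\partial M_t$ would force $\nabla_\eta \log H \geq 0$, contradicting $-\coth \rho_0 < 0$. Consequently $H_{\max}(t) := \sup_{M_t} H$ is always attained in the interior. At such an interior maximum one has $\nabla H = 0$ and $\Delta \log H \leq 0$, and the elementary inequality $|A|^2 \geq H^2/n$ reduces the evolution equation to
\[
\partial_t \log H \leq \tfrac{n - |A|^2}{H^2} \leq \tfrac{n}{H^2} - \tfrac{1}{n}.
\]
An ODE-comparison argument applied to the Lipschitz envelope $H_{\max}(t)$ (Hamilton's trick) then yields
\[
H_{\max}(t) \leq \max\bigl\{n,\, \sup_{M_0} H\bigr\} =: C_1 \qquad \text{for all } t \in [0, \bar{T}).
\]

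Finally, strict convexity on $[0, \bar T)$ forces $\kappa_i > 0$, so $\sum_i \kappa_i = H \leq C_1$ places each principal curvature in $(0, C_1]$, and therefore
\[
|A|^2 = \sum_i \kappa_i^2 \leq \Bigl(\sum_i \kappa_i\Bigr)^2 = H^2 \leq C_1^2,
\]
which is the claimed bound.

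The main technical point is the non-smoothness of $H_{\max}(t)$ in time and the possibility that the space-time maximum sits on the parabolic boundary; this is handled cleanly by the combination of the Hopf lemma (to rule out the spatial boundary) with the standard ODE comparison for the Lipschitz envelope of a smooth family. The favorable sign of the Neumann-type data $-\coth \rho_0$ is what makes the argument work and explains the dependence of the final constant on $\rho_0$.
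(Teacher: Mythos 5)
Your proof is correct, but it takes a genuinely different route from the paper. The paper works with the auxiliary function $\zeta=\log H+\log z^{0}$: the boundary term becomes $-\coth\rho_0+\tanh\rho_0<0$, which excludes boundary maxima, and at an interior maximum the key sign comes from $\nu^{0}<0$ (Lemma \ref{N0 less than 0}, itself a consequence of the convexity analysis of Section \ref{geometry from convexity}), after which the maximum principle gives $\sup_{M_t}(Hz^{0})\leqslant\sup_{M_0}(Hz^{0})$ and hence $H\leqslant\cosh\rho_0\sup_{M_0}H$. You instead work with $\log H$ alone, exclude boundary maxima directly from the Neumann identity \eqref{normal derivative H} (the weak first-derivative inequality at a boundary maximum already suffices; Hopf is not needed), and close the interior estimate with the purely algebraic inequality $|A|^{2}\geqslant H^{2}/n$ plus an ODE comparison for the Lipschitz envelope $H_{\max}(t)$. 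Your argument is more elementary and self-contained --- it does not rely on the sign of $\nu^{0}$ --- and it yields the explicit bound $H\leqslant\max\{n,\sup_{M_0}H\}$, which in fact does not depend on $\rho_0$ at all (so your closing remark attributing a $\rho_0$-dependence of the constant to the Neumann data is not borne out by your own estimate; $\rho_0$ enters only through the sign that rules out boundary maxima). The conversion to the bound on $|A|$ via $0<\kappa_i\leqslant H$ is the same in both arguments.
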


\begin{proof}
  We consider the auxiliary function $\zeta = \log H + \log z^0$. From
  \eqref{normal derivative H} and \eqref{normal derivative z},
  \[ \nabla_{\eta} \zeta = \nabla_{\eta} \log H + \nabla_{\eta} \log z^0 = -
     \coth \rho_0 + \tanh \rho_0 < 0. \]
  It follows that the maximum of $\zeta$ is not attained at the boundary
  $\partial M$. Using the evolutions \eqref{evolution log H} and
  \eqref{evolution f z},
\begin{align}
& \partial_t \zeta - \tfrac{1}{H^2} \Delta \zeta \\
= & (\partial_t - \tfrac{1}{H^2} \Delta) (\log H + \log z^0) \\
= & - \tfrac{| \nabla H|^2}{H^4} - \tfrac{|A|^2}{H^2} + \tfrac{2}{H z^0}
\nu_0 + \tfrac{1}{H^2 {z^0}^2} | \nabla z^0 |^2 .
\end{align}
  We rewrite the above evolution equation as
\begin{align}
& (\partial_t - \tfrac{1}{H^2} \Delta) \zeta + \tfrac{1}{H^2} \langle
\tfrac{\nabla H}{H} - \tfrac{\nabla z^0}{z^0}, \nabla \zeta \rangle = -
\tfrac{|A|^2}{H^2} + \tfrac{2}{H z^i} \nu^i .
\end{align}
  The right hand side is less than zero by \eqref{N0 less than 0}. From
  maximum principle,
  \[ \zeta = \log H + \log z^0 \leqslant \sup_{M_0} \zeta = \sup_{M_0} (\log H
     + \log z^0) . \]
  Since for each $t \in [0, \bar{T})$, $M_t$ is convex, each principal
  curvature is less than the mean curvature, so we obtain the curvature bound
  \eqref{curvature bound}.
\end{proof}

\begin{lemma}
  There exists a $C^{1, \alpha}$ limiting hypersurface $\partial M_{\bar{T}}$
  arising as the limit of $\partial M_t$. The boundary $\partial M_{\bar{T}}$
  is either an equator of the sphere $\partial B_0$ or is contained in an open
  hemisphere of $\partial B_0$.
\end{lemma}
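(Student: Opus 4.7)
The plan is to use the curvature bound \eqref{curvature bound} to produce $C^{1,\alpha}$ precompactness of the family $\{\partial M_t\}_{t \in [0, \bar T)}$ inside the fixed geodesic sphere $\partial B_0$, extract a limiting hypersurface by Arzela-Ascoli, and then invoke a dichotomy from spherical convex geometry.

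First I would observe that the second fundamental form of $\partial M_t$ as a hypersurface of $\partial B_0$ is controlled purely by $A_{M_t}$. Indeed, the free boundary condition $\langle \nu, \eta \rangle = 0$ makes $\nu|_{\partial M_t}$ a unit normal to $\partial M_t$ inside $\partial B_0$, and the vanishing cross-term \eqref{second fundamental form cross term} shows that the shape operator of $\partial M_t$ inside $\partial B_0$ coincides with the restriction of $A_{M_t}$ to $T \partial M$. The bound \eqref{curvature bound}, together with the convexity of $\partial M_t$ in $\partial B_0$ already established at the start of Section \ref{geometry from convexity}, therefore realizes $\{\partial M_t\}_{t \in [0, \bar T)}$ as a uniformly $C^{1,1}$ family of closed convex hypersurfaces of the round sphere $\partial B_0 \cong \mathbb S^n_{\sinh \rho_0}$.

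Second, such a family is precompact in the $C^{1,\alpha}$ topology for every $\alpha < 1$. Concretely, one parametrizes each $\partial M_t$ locally as a convex graph over a fixed totally geodesic $(n-1)$-sphere in $\partial B_0$ via the exponential map, uses Blaschke selection plus the uniform shape operator bound to upgrade Hausdorff precompactness of the bodies bounded by $\partial M_t$ to uniform $C^{1,1}$ bounds on these graph functions, and then applies Arzela-Ascoli. This produces a sequence $t_k \to \bar T$ along which $\partial M_{t_k}$ converges in $C^{1,\alpha}$ to a closed weakly convex $C^{1,\alpha}$ hypersurface $\partial M_{\bar T}$ of $\partial B_0$, since weak convexity passes to $C^1$ limits of convex hypersurfaces.

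For the dichotomy I would invoke the following standard fact from convex geometry of $\mathbb S^n$: a closed convex hypersurface $\Sigma \subset \mathbb S^n$ bounds a geodesically convex region $\Omega$, and either $\Omega$ is a closed hemisphere, in which case $\Sigma = \partial \Omega$ is a totally geodesic equator, or $\Omega$ admits a supporting closed hemisphere whose interior strictly contains $\Omega$, so $\Sigma$ is contained in an open hemisphere. Applied to $\partial M_{\bar T} \subset \partial B_0$ this yields exactly the stated alternative. The step I expect to be the most delicate is the upgrade from Hausdorff convergence of the convex bodies to genuine $C^{1,\alpha}$ convergence of their boundaries, since the shape operator bound is the only quantitative input and one has to be careful that no degeneration of the graph representation occurs at the limit, in particular that the normals remain nondegenerate as $t \to \bar T$; this is where the uniform curvature bound \eqref{curvature bound} combined with the convexity of $\partial M_t$ inside the ambient sphere is essential.
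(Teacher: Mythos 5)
Your argument is essentially the paper's: the proof given there simply defers to \cite[Corollary 5]{lambert-inverse-2016}, which runs exactly your compactness scheme (curvature bound plus convexity of $\partial M_t$ in $\partial B_0$ gives uniform $C^{1,1}$ control, hence $C^{1,\alpha}$ subconvergence), and the spherical dichotomy you describe as a ``standard fact'' is precisely the rigidity theorem of \cite{makowski-rigidity-2016} that the paper cites for this step. The one point worth adding is that, since the flow is expanding, the convex bodies in $\partial B_0$ bounded by $\partial M_t$ are nested in $t$, so the Hausdorff limit --- and with your $C^{1,1}$ bounds the $C^{1,\alpha}$ limit --- is unique, giving convergence of the full family $\partial M_t$ as $t \to \bar T$ rather than only along a subsequence, which is what the statement (and its later use) requires.
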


\begin{proof}
  The proof is the same with {\cite[Corollary 5]{lambert-inverse-2016}}
  relying on {\cite{makowski-rigidity-2016}}.
\end{proof}

\begin{lemma}
  \label{mean curvature lower bound}Let $M_t$ be a solution to the inverse
  mean curvature flow, if $\partial M_{\bar{T}}$ is positive distance away
  from the equator, then
  \[ \sup_{[0, \bar{T}) \times \mathbb{D}} \tfrac{1}{H} \leqslant c, \]
  where $c$ depends only on $M_0$ and the distance of $\partial M_{\bar{T}}$
  to the equator.
\end{lemma}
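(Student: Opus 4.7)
\emph{Proof plan.} Since $\tilde H\, H \geq n^2$ by the Cauchy--Schwarz inequality applied to $\sqrt{\kappa_i}$ and $1/\sqrt{\kappa_i}$, it suffices to produce an upper bound on the harmonic mean curvature $\tilde H = \sum \kappa_i^{-1}$; this will give $H \geq n^2/\tilde H$ and hence the desired upper bound on $1/H$.

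My plan is to consider the auxiliary function
\[ \zeta = \log \tilde H - n \log z^1, \]
the coefficient $n$ being dictated by the boundary derivative identities \eqref{normal derivative tilde H} and \eqref{normal derivative z}, which combine to give
\[ \nabla_\eta \zeta \leq n \coth \rho_0 - n \coth \rho_0 = 0. \]
Hopf's boundary point lemma then rules out a strict spatial maximum on $\partial M_t$, so the supremum of $\zeta$ on $[0, \bar T) \times \mathbb{D}$ is attained either at $t = 0$, where it is bounded in terms of $M_0$, or at a spatial interior point at some $t > 0$.

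At such an interior maximum, the plan is to apply the parabolic maximum principle. I would compute $(\partial_t - H^{-2}\Delta)\zeta$ by combining \eqref{evolution harmonic mean curvature} for $\log \tilde H$ with \eqref{evolution f z} applied to $f = \log z^1$. The critical-point identity $\nabla \log \tilde H = n\,\nabla \log z^1$ then lets me substitute $|\nabla \tilde H|^2/\tilde H^2 = n^2 |\nabla z^1|^2/(z^1)^2$ in the resulting expression. To close the argument I would use: the Cauchy--Schwarz bound $|A|^2 \geq H^2/n$ to extract a strictly negative term; the sign information $\nu^1 \leq -c_0 < 0$ from Lemma \ref{estimate normal}; and the uniform positivity $z^1 \geq \delta > 0$ from Lemma \ref{height estimate}, both $c_0$ and $\delta$ being positive thanks to the standing hypothesis that $\partial M_{\bar T}$ stays a definite distance from the equator, together with the $|A|$ bound supplied by the preceding proposition. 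Together these force $\tilde H$ at the interior maximum to be bounded above by a constant depending only on $M_0$, $\rho_0$, $c_0$ and $\delta$, yielding the claimed bound on $1/H$.

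The principal obstacle lies in the bracketed second-fundamental-form gradient term of \eqref{evolution harmonic mean curvature}: the convexity of $\tilde H$ in the Weingarten tensor, i.e. a Newton-type inequality on the principal curvatures, has to be invoked to give this term the correct sign, so that it absorbs the positive $(n^2 - n)|\nabla z^1|^2/(Hz^1)^2$ remainder left over after the critical-point substitution. A minor technical point is that the boundary inequality $\nabla_\eta \zeta \leq 0$ is not strict; this can be handled either by replacing the coefficient $n$ by $n + \varepsilon$, carrying the argument through for $\varepsilon > 0$, and letting $\varepsilon \to 0$, or by a direct continuation argument at any would-be boundary maximum.
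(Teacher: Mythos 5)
Your proposal does not work as written: the signs at the interior maximum go the wrong way, and no amount of care with the bracketed gradient term can fix it.

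Concretely, take $\zeta = \log\tilde H - n\log z^1$ and combine \eqref{evolution harmonic mean curvature} with \eqref{evolution f z} for $f=\log z^1$. At an interior critical point, the identity $|\nabla\tilde H|^2/\tilde H^2 = n^2|\nabla z^1|^2/(z^1)^2$ turns the evolution into
\[
(\partial_t - \tfrac{1}{H^2}\Delta)\zeta
= \frac{2n}{\tilde H H} + \frac{n^2 - n - |A|^2}{H^2} + \frac{(n^2-n)\,|\nabla z^1|^2}{H^2(z^1)^2} - \frac{2n\,\nu^1}{H z^1} + B,
\]
where $B$ is the bracketed term. To extract an upper bound on $\tilde H$ (equivalently on $1/H$) at a spatial interior maximum you need the right-hand side to become negative once $H$ is small, but every term that survives is either bounded or blows up with the \emph{wrong} sign. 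Indeed, by convexity $|A|^2\leqslant H^2$, so $\tfrac{n^2-n-|A|^2}{H^2}\geqslant \tfrac{n^2-n}{H^2}-1\to+\infty$ as $H\to0$; the Cauchy--Schwarz estimate $|A|^2\geqslant H^2/n$ that you invoke only yields the bounded negative contribution $-|A|^2/H^2\leqslant -1/n$, nowhere near enough. Worse, because $\nu^1\leqslant-c_0<0$ by Lemma \ref{estimate normal}, the term $-\tfrac{2n\nu^1}{Hz^1}\geqslant\tfrac{2nc_0}{Hz^1}$ is \emph{positive} and also diverges. So the maximum-principle inequality $0\leqslant(\partial_t-H^{-2}\Delta)\zeta$ is trivially satisfied as $H\to0$ and yields no information. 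The bracketed term $B$ is indeed $\leqslant 0$ (Urbas), but the critical-point condition only constrains the contraction $\nabla\tilde H = -g_{ij}\tilde h^{ik}\tilde h^{jl}\nabla h_{kl}$, not $|\nabla h|^2$ itself, so $B$ cannot be shown to absorb a zeroth-order divergence like $(n^2-n)/H^2$. Your $\varepsilon$-perturbation fix for the non-strict Neumann inequality is a minor point by comparison; the obstruction is in the interior.

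The paper avoids all of this by working with $\log\tfrac1H$ directly and a carefully engineered second term $F=f\circ q$, $q=\lambda z^1 + z^0$ with $\lambda$ very negative, $f(q)=-\log(\Lambda-q)$ with $0<\Lambda<1/\cosh\rho_0$. This has three effects: $f''=(f')^2$ kills the post-critical-point gradient terms identically; $1+f'q=\Lambda/(\Lambda-q)>0$, so after arranging the evolution as a quadratic $-n(1+f'q)(1/H)^2 + 2f'(\partial q/\partial z^\alpha)\nu^\alpha(1/H) + |A|^2/H^2\geqslant 0$, the leading coefficient is strictly negative; and the mixed choice $q=\lambda z^1 + z^0$ (not $z^1$ alone) produces the strict Neumann sign $\nabla_\eta\xi<0$ on the boundary. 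The quadratic then bounds $1/H$. Note also that the paper \emph{does} later run an auxiliary-function argument built on $\log\tilde H - (n+1)\log z^1 - \alpha t$, but only in the subsequent theorem on preservation of strict convexity, and precisely there the constant hiding the unfavorable terms depends on the $1/H$ bound established in this lemma --- so your route would be circular.
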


\begin{proof}
  Let $f (q) = - \log (\Lambda - q)$, $q = \lambda z^1 + z^0$. We require that
  $0 < \Lambda < \tfrac{1}{\cosh \rho_0}$ and $\lambda < - 1 - \tfrac{\cosh
  \rho_0}{\delta}$, where $\delta$ is the number in Lemma \ref{height
  estimate}. We have that $f' (q) = \tfrac{1}{\Lambda - q}$, $f'' (q) =
  \tfrac{1}{(\Lambda - q)^2}$.
  
  Now
  \[ (\partial_t - \tfrac{1}{H^2} \Delta) \log \tfrac{1}{H} = \tfrac{| \nabla
     H|^2}{H^4} + (- n + |A|^2) \tfrac{1}{H^2} . \]
  Evolution of $F$:
  \[ (\partial_t - \tfrac{1}{H^2} \Delta) F = - \tfrac{n}{H^2} \tfrac{\partial
     F}{\partial z^{\alpha}} z^{\alpha} + \tfrac{2}{H} \tfrac{\partial
     F}{\partial z^{\alpha}} \nu^{\alpha} - \tfrac{\partial^2 F}{H^2 \partial
     z^{\alpha} \partial z^{\beta}} \langle \nabla z^{\alpha}, \nabla
     z^{\beta} \rangle . \]
  Let $F = f \circ q (z)$, note that
  \[ \tfrac{\partial (f \circ q)}{\partial z^{\alpha} \partial z^{\beta}} =
     \tfrac{\partial}{\partial z^{\alpha}} (f' \tfrac{\partial q}{\partial
     z^{\beta}}) = f'' \tfrac{\partial q}{\partial z^{\alpha}} \tfrac{\partial
     q}{\partial z^{\beta}} + f' \tfrac{\partial^2 q}{\partial z^{\alpha}
     \partial z^{\beta}} . \]
  So we have that
\begin{align}
& (\partial_t - \tfrac{1}{H^2} \Delta) F \\
= & - \tfrac{n}{H^2} f' \tfrac{\partial q}{\partial z^{\alpha}} z^{\alpha}
+ \tfrac{2}{H} f' \tfrac{\partial q}{\partial z^{\alpha}} \nu^{\alpha} -
\tfrac{1}{H^2} f' \tfrac{\partial^2 q}{\partial z^{\alpha} \partial
z^{\beta}} \langle \nabla z^{\alpha}, \nabla z^{\beta} \rangle -
\tfrac{1}{H^2} f'' \tfrac{\partial q}{\partial z^{\alpha}} \tfrac{\partial
q}{\partial z^{\beta}} \langle \nabla z^{\alpha}, \nabla z^{\beta} \rangle
.
\end{align}
  Since $q$ is linear in $z^1$ and $z^0$, we have that the evolution of $\xi$
  \[ \xi : = \log \tfrac{1}{H} + f \circ q (z) \]
  is given by the following
  \begin{equation}
    (\partial_t - \tfrac{1}{H^2} \Delta) \xi = \tfrac{| \nabla H|^2}{H^4} -
    \tfrac{1}{H^2} f'' | \nabla q|^2 + (- n + |A|^2 - n f' q) \tfrac{1}{H^2} +
    \tfrac{2}{H} f' \tfrac{\partial q}{\partial z^{\alpha}} \nu^{\alpha} .
    \label{evolution xi}
  \end{equation}
  Now we compute the boundary derivatives of $\xi$:
\begin{align}
& \nabla_{\eta} \xi \\
= & \coth \rho_0 + \coth \rho_0 f' \tfrac{\partial q}{\partial z^1} z^1 +
f' \tfrac{\partial q}{\partial z^0} z^0 \tanh \rho_0 \\
= & \coth \rho_0 (1 + \tfrac{1}{\Lambda - q} a z^1 + \tfrac{1}{\Lambda -
q} z^0 \tanh^2 \rho_0) \\
= & \tfrac{\coth \rho_0}{\Lambda - q} (\Lambda - z^0 + z^0 \tanh^2 \rho_0)
\\
= & \tfrac{\coth \rho_0}{\Lambda - q} (\Lambda - \tfrac{1}{\cosh \rho_0})
< 0 ;
\end{align}
  where in the last line we have used along the boundary $z^0 = \cosh \rho_0$
  and $1 - \tanh^2 \rho_0 = \tfrac{1}{\cosh^2 \rho_0}$.
  
  Next we consider that the maximum of $\xi$. From the condition
  $\nabla_{\eta} \xi < 0$, we have that
  \[ \max_{[0, \bar{T}) \times \mathbb{D}} \xi = \xi (t_0, \omega_0), \]
  where $\omega_0$ is an interior point of $\mathbb{D}$. Note that $\partial_t
  \xi - \tfrac{1}{H^2} \Delta \xi \geqslant 0$ at $(t_0, \omega_0)$, by
  \eqref{evolution xi}, we have
\begin{align}
& \tfrac{| \nabla H|^2}{H^4} - \tfrac{1}{H^2} f'' | \nabla q|^2 + (- n +
|A|^2 - n f' q) \tfrac{1}{H^2} + \tfrac{2}{H} f' \tfrac{\partial
q}{\partial z^{\alpha}} \nu^{\alpha} \\
= & \tfrac{1}{H^2} ((f')^2 - f'') | \nabla q|^2 - \tfrac{n}{H^2} (1 + f'
q) + \tfrac{|A|^2}{H^2} + \tfrac{2}{H} f' \tfrac{\partial q}{\partial
z^{\alpha}} \nu^{\alpha} \\
= & 0 - \tfrac{n}{H^2} (1 + f' q) + \tfrac{|A|^2}{H^2} + \tfrac{2}{H} f'
\tfrac{\partial q}{\partial z^{\alpha}} \nu^{\alpha} \\
\geqslant & 0,
\end{align}
  where we have used the relation
  \[ 0 = \nabla \xi = - \tfrac{\nabla H}{H} + f' \nabla q \text{ at } (t_0,
     \omega_0) . \]
  Note also that
  \[ 1 + f' q = \tfrac{\Lambda}{\Lambda - q} > 0. \]
  Now we have an inequality for $\tfrac{1}{H}$ in the form
  \[ - n (1 + f' q) \left( \tfrac{1}{H} \right)^2 + 2 f' \tfrac{\partial
     q}{\partial z^{\alpha}} \nu^{\alpha} \left( \tfrac{1}{H} \right) +
     \tfrac{|A|^2}{H^2} \geqslant 0. \]
  The coefficients on $\tfrac{1}{H}$ are bounded, so this would imply a bound
  on $\tfrac{1}{H}$.
\end{proof}

\begin{theorem}
  The strict convexity of $M$ is preserved up to time $T^{\ast}$ and $\bar{T}
  = T^{\ast}$.
\end{theorem}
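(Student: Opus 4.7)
The plan is to show that the harmonic mean curvature $\tilde{H} = \sum_i \kappa_i^{-1}$ stays bounded above on $[0,\bar{T})$. Since $\tilde{H} \to \infty$ precisely when the smallest principal curvature degenerates, a uniform upper bound on $\tilde{H}$ guarantees a uniform positive lower bound on the smallest principal curvature, so by continuity $M_{\bar{T}}$ remains strictly convex. Maximality of $\bar{T}$ as the strict convexity time then forces $\bar{T} = T^{\ast}$.

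To bound $\tilde{H}$, I would mimic the auxiliary-function/maximum-principle argument used in Lemma~\ref{mean curvature lower bound}, but applied to
\[
\xi := \log \tilde{H} - C \log z^1,
\]
where $C > n$ is a constant to be chosen. Two ingredients feed into this choice. First, along $\partial M$ we have $\nabla_\eta \log \tilde{H} \leqslant n \coth \rho_0$ by \eqref{normal derivative tilde H} and $\nabla_\eta \log z^1 = \coth \rho_0$ by \eqref{normal derivative z}, so
\[
\nabla_\eta \xi \leqslant (n - C)\coth \rho_0 < 0,
\]
forcing the maximum of $\xi$ to occur at an interior spacetime point $(t_0,\omega_0)$. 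Second, $z^1 \geqslant \delta > 0$ on $[0,\bar{T})$ by the height estimate Lemma~\ref{height estimate} together with the curvature bound \eqref{curvature bound}, so $\log z^1$ is a well-defined, bounded auxiliary function.

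At $(t_0,\omega_0)$ one has $\nabla \xi = 0$, which means
\[
\frac{\nabla \tilde{H}}{\tilde{H}} = C \frac{\nabla z^1}{z^1},
\qquad \text{hence} \qquad
\frac{|\nabla \tilde{H}|^2}{\tilde{H}^2} = C^2 \frac{|\nabla z^1|^2}{(z^1)^2}.
\]
Combining the evolution equation \eqref{evolution harmonic mean curvature} for $\log \tilde{H}$ with \eqref{evolution f z} applied to $\log z^1$, the inequality $(\partial_t - \tfrac{1}{H^2}\Delta)\xi \geqslant 0$ at this point becomes, schematically,
\[
\frac{2n}{\tilde{H} H} - \frac{n}{H^2} - \frac{|A|^2}{H^2} + \frac{C n}{H^2} - \frac{2 C \nu^1}{H z^1} + (\text{controlled gradient terms}) \geqslant 0.
\]
Using the curvature bound \eqref{curvature bound} for $|A|$ and $H$, the height estimate for $z^1$, and the elementary inequality $n\tilde{H}^{-1} \leqslant H$ (so $\tfrac{2n}{\tilde{H} H} \leqslant \tfrac{2}{\tilde{H}^2} \cdot (\text{bounded})$), one rearranges this into an upper bound $\tilde{H}(t_0,\omega_0) \leqslant C'$ depending only on $\rho_0$, $M_0$, and the distance estimates already established. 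Since $\xi$ attains its maximum at $(t_0,\omega_0)$ and $-C \log z^1$ is bounded, this produces $\sup_{[0,\bar{T})\times \mathbb{D}} \tilde{H} \leqslant C''$.

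The main obstacle is controlling the last, unsigned gradient term in \eqref{evolution harmonic mean curvature} involving $\nabla h_{pq}\cdot \nabla h_{kl}$; the strategy is to exploit the Cauchy–Schwarz/Kato-type inequality relating these derivatives to $|\nabla H|^2$ and $|\nabla \tilde{H}|^2$, which are already controlled at the critical point via $\nabla \xi = 0$. Once the $\tilde{H}$ bound is in place, suppose for contradiction that $\bar{T} < T^{\ast}$. Then the lower bound on the smallest principal curvature passes to the limit, so $M_{\bar{T}}$ is still smooth and strictly convex. By the short-time existence applied at $t=\bar{T}$, strict convexity persists on $[\bar{T},\bar{T}+\varepsilon)$, contradicting the maximality of $\bar{T}$. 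Hence $\bar{T} = T^{\ast}$.
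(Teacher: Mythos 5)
Your overall strategy (bound $\tilde{H}$ via a maximum principle applied to $\log\tilde{H}-C\log z^1$, using \eqref{normal derivative tilde H} and \eqref{normal derivative z} to push the maximum into the interior) is exactly the paper's, but there are two genuine gaps. First, the central step fails as stated: at the interior spacetime maximum the inequality $(\partial_t-\tfrac{1}{H^2}\Delta)\xi\geqslant 0$ reads, after using $\nabla\xi=0$,
\[
0\leqslant \tfrac{2n}{\tilde{H}H}+(\text{terms bounded independently of }\tilde{H}),
\]
and \emph{every} term here is either bounded or, like $\tfrac{2n}{\tilde{H}H}>0$, becomes harmless as $\tilde{H}\to\infty$. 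The evolution \eqref{evolution harmonic mean curvature} contains no term that grows negatively in $\tilde{H}$, so you cannot ``rearrange'' this into $\tilde{H}(t_0,\omega_0)\leqslant C'$; the inequality is consistent with arbitrarily large $\tilde{H}$. This is precisely why the paper's test function carries the extra term $-\alpha t$: for $\alpha$ large the same computation gives $0\leqslant c-\alpha<0$, a contradiction that rules out any interior maximum with $t_0>0$, so $\sup\phi$ is controlled by its value at $t=0$ and $\log\tilde{H}\leqslant c+\alpha\bar{T}$ follows. Without some device of this kind your argument does not close. (Relatedly, your dismissal of the $\nabla h_{pq}\cdot\nabla h_{kl}$ term as handled by ``Cauchy--Schwarz/Kato'' is optimistic; the paper invokes the specific algebraic fact, due to Urbas, that the whole bracket in \eqref{evolution harmonic mean curvature} is nonpositive.)

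Second, you have silently assumed the hypotheses of the estimates you quote. The bounds you need --- $1/H\leqslant c$ from Lemma \ref{mean curvature lower bound} (required to control $\tfrac{Cn}{H^2}$ and $\tfrac{2C\nu^1}{Hz^1}$; note \eqref{curvature bound} only bounds $H$ from \emph{above}) and $z^1\geqslant\delta>0$ from Lemma \ref{height estimate} --- both degenerate when $\partial M_{\bar{T}}$ approaches the equator. So the argument must be split into cases: if $\partial M_{\bar{T}}$ is an equator, one concludes directly via Lemma \ref{mean curvature rigidity} that $M_{\bar{T}}$ is totally geodesic, hence a terminal singularity and $\bar{T}=T^{\ast}$; only in the non-equatorial case does your maximum principle argument (suitably repaired as above) apply. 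As written, your proof omits the equatorial case entirely.
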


\begin{proof}
  If $\partial M_{\bar{T}}$ is the equator, we conclude from the Lemma
  \ref{mean curvature rigidity} that $M_{\bar{T}}$ is a totally geodesic disk
  and thus a singularity of the flow. This would yield $\bar{T} = T^{\ast}$.
  If $\partial M_{\bar{T}}$ is not the equator, then
  \[ \tfrac{1}{H} \leqslant c \text{ for all } t \in [0, \bar{T}) \]
  and by the estimate Lemma \ref{height estimate},
  \[ z^1 \geqslant \delta > 0. \]
  We consider the $\tilde{H}$ and define
  \[ \phi = \log \tilde{H} - (n + 1) \log z^1 - \alpha t, t < \bar{T}, \]
  where $\alpha$ will be chosen in dependence of $\delta$ and the initial
  data. Observe that from \eqref{normal derivative tilde H} and \eqref{normal
  derivative z},
\begin{align}
& \nabla_{\eta} \phi \\
= & \tfrac{1}{\tilde{H}} \nabla_{\eta} \tilde{H} - \tfrac{n + 1}{z^1}
\nabla_{\eta} z^1 \\
\leqslant & - \coth \rho_0 < 0,
\end{align}
  and for $0 < T < \bar{T}$, we have
  \[ \sup_{[0, T] \times \mathbb{D}} \phi = \phi (t_0, \xi_0), t_0 > 0. \]
  So $\xi_0$ is not at the boundary $\partial \mathbb{D}$. At $(t_0, \xi_0)$,
  $\nabla \phi = 0$ implies that
  \[ \tfrac{\nabla \tilde{H}}{\tilde{H}} = (n + 1) \tfrac{\nabla z^1}{z^1} .
  \]
  From \eqref{evolution harmonic mean curvature} and \eqref{evolution f z},
\begin{align}
& \partial_t \phi - \tfrac{1}{H^2} \Delta \phi \\
= & - \tfrac{|A|^2}{H^2} + \tfrac{2 n}{\tilde{H} H} + \tfrac{n^2}{H^2} +
\tfrac{1}{H^2 \tilde{H}^2} | \nabla \tilde{H} |^2 \\
& + \tfrac{2}{H^2 \tilde{H}} \left( \tfrac{1}{H} g_{i j} \tilde{h}^{i k}
\tilde{h}^{j l} \nabla_k H \nabla_l H - g_{i j} \tilde{h}^{i k}
\tilde{h}^{j p} \tilde{h}^{l q} \nabla h_{p q} \cdot \nabla h_{k l}
\right) \\
& - \tfrac{2 (n + 1)}{H z^1} \nu_1 - \tfrac{n + 1}{H^2 (z^1)^2} | \nabla
z^1 |^2 - \alpha .
\end{align}
  The term in the big bracket is negative (see the proof of {\cite[Lemma
  3.8]{urbas-expansion-1991}}), we have at $(t_0, \xi_0)$,
  \[ 0 \leqslant c + \tfrac{(n + 1)^2}{H^2 (z^1)^2} | \nabla z^1 |^2 - \alpha,
  \]
  where $c$ in the above depends on $\delta$ and the bound on $\tfrac{1}{H}$.
  
  Choosing a sufficiently large $\alpha$, this is a contradiction. Then we
  obtain that under the assumption that $\partial M_{\bar{T}}$ is not an
  equator, the supremum of $\phi$ would be decreasing, hence $\phi$ would be
  bounded up to $\bar{T}$. But then
  \[ \log \tilde{H} = \phi + (n + 1) \log z^1 + \alpha t \leqslant c + \alpha
     \bar{T}, \]
  which contradicts the definition of $\bar{T}$ at which $\tilde{H}$ would
  have to blow up provided $\bar{T} < T^{\ast}$.
\end{proof}

The estimates in this section are now sufficient to show that that the inverse
mean curvature flow converges to a totally geodesic disk. The proof follows
the same lines in {\cite{lambert-inverse-2016}}. For completeness, we outline
the proofs.

\begin{lemma}
  \label{mean curvature rigidity}If an embedded hypersurface $M \subset
  \mathbb{H}^{n + 1}$ is mean convex with free boundary in $B_0$ such that
  $\partial M$ is an equator, then $M$ is totally geodesic.
\end{lemma}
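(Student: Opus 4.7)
Without loss of generality, choose coordinates so that the equator $\partial M = \partial B_0 \cap P$, where $P = \mathbb{H}^{n+1} \cap \{z^1 = 0\}$ is the totally geodesic hyperplane through the center $e_0$; write $D = P \cap B_0$ for the totally geodesic disk bounded by $\partial M$, so that the claim becomes $M = D$. My first step is to observe that $M$ is tangent to $D$ along $\partial M$. Indeed, by the free boundary condition, the outward conormal $\eta$ of $\partial M$ in $M$ agrees with the outward unit normal of $\partial B_0$, which at $p \in \partial M \subset P$ is the radial direction from $e_0$; since $e_0 \in P$ this radial vector lies in $T_p P$. Combined with $T_p \partial M \subset T_p P$, this forces $T_p M = T_p \partial M \oplus \mathbb{R}\eta = T_p P = T_p D$.

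In the context in which this lemma is invoked, $M$ is the limit $M_{\bar T}$ of strictly convex hypersurfaces along the flow, hence weakly convex, and Corollary \ref{M in pointed half ball} places $M$ in the half-ball $\{z^1 \geq 0\} \cap \bar B_0$. Using the tangency above, I would represent $M$ near $\partial M$ as a $C^1$ graph $z^1 = w$ over $D$, with $w \geq 0$ and Cauchy data $w = 0$, $\nabla w = 0$ on $\partial M$. The graph mean curvature operator $\mathcal{M}$ is quasilinear elliptic, and the inequality $H_M \geq 0 = \mathcal{M}[0]$ together with $w \geq 0$ yields, after linearization about $w = 0$, a differential inequality $L w \leq 0$ for a uniformly elliptic $L$ with smooth coefficients. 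Hopf's boundary-point lemma applied to this inequality at each $p \in \partial M$ forces either the strict sign $\partial_\eta w(p) < 0$ or $w \equiv 0$ in a collar of $\partial M$. The former is excluded by the Cauchy datum $\nabla w = 0$ on $\partial M$, so $w \equiv 0$ in a neighbourhood of $\partial M$; that is, $M$ and $D$ coincide near $\partial M$.

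Finally, the local coincidence is propagated to all of $M$ by Aronszajn's unique-continuation theorem for second-order elliptic equations (equivalently, by the real-analyticity of solutions to the analytic graph mean curvature operator), giving $M = D$ globally and hence totally geodesic. The main technical obstacle I anticipate is ensuring enough regularity of the graph function $w$ up to the corner where $\partial M$ meets $\partial B_0$ so that Hopf's lemma applies with the usual interior sphere condition; this is the step which genuinely requires the mixed Neumann/free-boundary regularity results alluded to by the authors when they invoke \cite{lambert-inverse-2016,makowski-rigidity-2016}. Once this regularity is in place, the Hopf plus unique-continuation mechanism delivers the rigidity in a routine way.
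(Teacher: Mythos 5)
Your overall strategy---foliate by the totally geodesic leaves $L_s$ of the warped product determined by $\partial M$, and run a Hopf/strong-maximum-principle comparison at a tangency---is the same circle of ideas as the paper's proof, which uses Schoen's interior and boundary maximum principles for the graphical mean curvature operator. But your execution has a sign error that breaks the argument. With the orientation that is relevant here ($\nu$ is the normal for which $H\geqslant 0$, and it points from $M$ \emph{toward} the disk $D$, away from $p_1$; this is the direction in which the flow moves, and it is the ``downward'' normal in the paper's graph formula), one has $\mathcal{M}[w]=\operatorname{div}\bigl(VDw/\sqrt{1+V^2|Dw|^2}\bigr)=H\geqslant 0=\mathcal{M}[0]$, so the linearization gives $Lw\geqslant 0$: $w$ is a \emph{subsolution}, not a supersolution as you assert. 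For a subsolution the data $w\geqslant 0$, $w=0$, $\nabla w=0$ on $\partial D$ yield nothing: $w=x_1^2$ on $\{x_1\geqslant 0\}$ satisfies $\Delta w=2\geqslant 0$ with exactly this Cauchy data and is not identically zero. The maximum principle only compares $M$ with a leaf that touches it from the side toward which $\nu$ does \emph{not} point, i.e.\ from above. This is precisely why the paper slides the leaf: it sets $s_0=\max_M s$ and compares $M$ with $L_{s_0}$, which touches $M$ from the correct side---first at an interior point (forcing $s_0=0$, hence $M\subset\{s\leqslant 0\}$), and only then along $\partial M$ via the boundary point lemma, where $M$ lies \emph{below} $D$, the opposite of the containment your argument relies on.

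Two further problems. First, your one-sided containment $M\subset\{z^1\geqslant 0\}$ is imported from the convexity results of Section \ref{geometry from convexity}; the lemma is stated, and applied (e.g.\ to $M_{T^\ast}$ and in the remark following the proof), for merely mean convex $M$, and the sliding-leaf argument needs no such containment. Second, Aronszajn's unique continuation applies to solutions of an equation, not to the differential inequality $Lw\geqslant 0$ (or $\leqslant 0$): a subsolution can vanish on an open set without vanishing identically (e.g.\ $w=(x_1)_+^3$ with $\Delta w=6(x_1)_+\geqslant 0$), so local coincidence near $\partial M$ does not propagate this way. The paper instead propagates by an open--closed argument, reapplying the strong maximum principle at each point of the coincidence set with the leaf.
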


\begin{proof}
  If $\partial M$ is an equator, $\partial M$ determines a totally geodesic
  disk $P$ and a hyperbolic $n$-subspace with $P \subset \mathbb{H}^n$ which
  we denote by $\mathbb{H}^n$. We denote the center of the disk by $o$. Then
  $\mathbb{H}^{n + 1}$ is a warped product $\mathbb{H}^n \times_V \mathbb{R}$
  with metric
  \begin{equation}
    b = b' + \cosh^2 \ensuremath{\operatorname{dist}}_{\mathbb{H}^n} (o, p')
    \mathrm{d} s^2 =: b' + V^2 \mathrm{d} s^2,
  \end{equation}
  where $(p', s) \in \mathbb{H}^n \times \mathbb{R}$ and $b'$ is the standard
  metric of $\mathbb{H}^n$. If a piece of $M$ is given by a function $u$, the
  mean curvature is then
  \begin{equation}
    H =\ensuremath{\operatorname{div}} (\tfrac{V D u}{\sqrt{1 + V^2 |D u|^2}})
  \end{equation}
  with respect to the downward unit normal $\nu$. Here $D$ is the connection
  on $\mathbb{H}^n$. Note that $H$ is translation invariant in the
  $s$-direction. Write
  \[ L_s = \{(p', s) \in \mathbb{H}^{n + 1} : p' \in \mathbb{H}^n \} . \]
  Let $s_0$ be the maximum of $s$ such that $M \cap L_s$ is nonempty. Since
  $\partial M \subset L_0$, $s_0 \geqslant 0$. Let $p \in L_s \cap M$, the
  tangent space of $T_p M$ and $T_p L_s$ is the same, if $s_0 > 0$, then by
  interior maximum principle of {\cite[Lemma 1]{schoen-uniqueness-1983}},
  there is a small neighborhood of $p$ in $M$ are also in $L_s \cap M$. By
  repeating this argument at points different from $p$, $L_s \cap M$ actually
  contains all the interior points of $M$. However, $\partial M \subset L_0$
  and this would violate the continuity at $\partial M$. So $s_0 = 0$.
  
  Now assume that $p$ is a point of $\partial M$. By the free boundary
  condition, we have that the two tangent space $T_p M$ and $T_p L_0$ agrees.
  We can invoke the boundary point lemma of {\cite{schoen-uniqueness-1983}},
  we have that $L_0 \cap M$ contains a neighborhood of $p$ in $M$. Repeating
  the argument, we have that $M$ lies entirely in $L_0$ which says that $M$ is
  actually the geodesic disk $P$.
\end{proof}

\begin{remark}
  Note the lemma only uses the mean-convexity of $M$.
\end{remark}

\begin{theorem}
  At the maximal existence time $T^{\ast}$, $\partial M_{T^{\ast}}$ is an
  equator of the sphere $\partial B_0$ and $M_{T^{\ast}}$ is a totally
  geodesic disk.
\end{theorem}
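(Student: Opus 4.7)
The plan is to argue by contradiction against the possibility that $\partial M_{T^{\ast}}$ is not an equator. By the limit lemma together with the theorem $\bar{T} = T^{\ast}$ just proved, the candidates for $\partial M_{T^{\ast}}$ are either an equator of $\partial B_0$ or a $C^{1,\alpha}$ limit lying strictly inside an open hemisphere. Suppose for contradiction we are in the latter case, so $\partial M_{T^{\ast}}$ is a positive distance from the equator $\mathcal{H}(e_1)$ from \eqref{equator}.

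In this regime the paper has already built up all the a priori bounds needed to run parabolic regularity. Lemma \ref{mean curvature lower bound} gives $1/H \leqslant c$ uniformly on $[0, T^{\ast}) \times \mathbb{D}$, and the curvature estimate \eqref{curvature bound} gives $|A| \leqslant C$, so $0 < c^{-1} \leqslant H \leqslant n C$. Strict convexity up to $T^{\ast}$ combined with Lemma \ref{height estimate} yields $z^1 \geqslant \delta > 0$, and by Proposition \ref{graphical} each $M_t$ is a Moebius graph of some $u(t, \cdot)$ whose value stays in a compact subinterval of $(1, \infty)$. The flow equation \eqref{graphical imcf} combined with the expansion \eqref{mean curvature expansion} then reads as a uniformly parabolic quasilinear equation for $u$ on $\mathbb{D}$ with the genuine Neumann condition $\sum_i \xi^i \partial_i u = 0$ on $\partial \mathbb{D}$ and uniform $C^1$ control. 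Standard parabolic regularity for oblique problems (Krylov--Safonov $C^{2,\alpha}$ theory followed by Schauder bootstrap) supplies uniform $C^{k,\alpha}$ estimates on $u$ up to $t = T^{\ast}$, so the flow extends smoothly to a strictly convex free boundary hypersurface $M_{T^{\ast}}$, and the short-time existence portion of Theorem \ref{convergence theorem} applied with initial datum $M_{T^{\ast}}$ then continues the flow past $T^{\ast}$. This contradicts the maximality of $T^{\ast}$.

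Consequently $\partial M_{T^{\ast}}$ is an equator of $\partial B_0$. Since every $M_t$ is mean convex, the geometric limit $M_{T^{\ast}}$ inherits (at least weak) mean convexity, and Lemma \ref{mean curvature rigidity}, which by the remark following it only requires mean convexity, applies to conclude that $M_{T^{\ast}}$ is a totally geodesic disk. The main obstacle in this plan is the regularity and extension step: one needs to verify that the Neumann condition in Moebius coordinates fits the obliqueness hypotheses of the Krylov--Safonov and Schauder theory, and that the convergence $M_t \to M_{T^{\ast}}$ is sufficiently strong to be used as initial data for reapplying short-time existence. Everything else is a careful assembly of the estimates established in the previous sections, following the strategy of \cite{lambert-inverse-2016}.
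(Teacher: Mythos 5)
Your proposal is correct and follows essentially the same route as the paper: assume $\partial M_{T^{\ast}}$ is not an equator, invoke Lemma \ref{mean curvature lower bound} together with the curvature bound \eqref{curvature bound} and the graphical representation to see that \eqref{graphical imcf} is uniformly parabolic via \eqref{mean curvature expansion}, extend the solution past $T^{\ast}$ by parabolic regularity to reach a contradiction, and then conclude with Lemma \ref{mean curvature rigidity}. The paper states the regularity/extension step more tersely, but the argument is the same; your added remarks on verifying obliqueness are a reasonable elaboration rather than a departure.
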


\begin{proof}
  We assume that $\partial M_{T^{\ast}}$ is not an equator, then due to Lemma
  \ref{mean curvature lower bound}, we have that $\tfrac{1}{H} \geqslant c$ at
  $\partial M_{T^{\ast}}$. Let
  \[ F (x, u, \partial u, \partial^2 u) = - \tfrac{v}{H}, \]
  so
  \begin{equation}
    \tfrac{\partial F}{\partial u_{i j}} = \tfrac{v}{H^2} \tfrac{\partial
    H}{\partial u_{i j}} = \tfrac{v}{H^2} (\tfrac{1}{v} g^{i j}) =
    \tfrac{1}{H^2} g^{i j},
  \end{equation}
  follows from \eqref{mean curvature expansion}. The evolution is then
  uniformly parabolic and by regularity theory, we can extend a solution $u$
  of \eqref{graphical imcf} on $[0, T)$ where $T \in (0, T^{\ast})$ to a
  solution $u$ of \eqref{graphical imcf} on $[0, T + \varepsilon)$. The
  constant $\varepsilon > 0$ and depends only on the data. We can choose $T $
  such that $T^{\ast} - T < \varepsilon$ and therefore we extend the solution
  beyond $T^{\ast}$. This contradicts the assumption $\partial M_{T^{\ast}}$
  is not an equator.
  
  Hence we have shown that $\partial M_{T^{\ast}}$ is an equator. It is easy
  to see that $M_{T^{\ast}}$ must be mean convex with a free boundary, by
  Lemma \ref{mean curvature rigidity}, we have that $M_{T^{\ast}}$ must be a
  totally geodesic disk.
\end{proof}

\section{Willmore type inequality}\label{section willmore}

We prove a version of Theorem \ref{willmore} under the assumption that $M$ is
strictly convex which is a hyperbolic analog of
{\cite{lambert-geometric-2017}}. We basically follow their presentation
adapting to the hyperbolic case and with a slight difference on proving the
equality case of \eqref{willmore for weak convex}. See Lemma \ref{mean
curvature rigidity}. First, we control the $L^2$ norm of $H$.

\begin{lemma}
  Let $M_t$ be the convex solution to \eqref{imcf}. Then for all $1 \leqslant
  p < \infty$, there holds
  \[ \lim_{t \to T^{\ast}} \int_{M_t} H^p (\cdot, t) = 0. \]
\end{lemma}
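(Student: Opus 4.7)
The plan is to combine the uniform curvature bound with the area bound and the convergence to a totally geodesic disk, reducing the claim to uniform decay of $H$.

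First, since the area element evolves by $\partial_t\,d\mu_t = d\mu_t$ under \eqref{imcf}, we have $|M_t|=|M_0|e^t$, which is bounded above by $\lambda$, the area of the limiting totally geodesic disk provided by Theorem \ref{convergence theorem}. Second, the curvature estimate \eqref{curvature bound} gives $|A|\le C$ uniformly on $[0,T^\ast)\times \mathbb{D}$; combined with $H\le \sqrt{n}\,|A|$ from Cauchy--Schwarz on the nonnegative principal curvatures, this bounds $H$ uniformly.

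The main step is to establish $\|H(\cdot,t)\|_{L^\infty(\mathbb{D})}\to 0$ as $t\to T^\ast$. The uniform $|A|$-bound provides equibounded $C^2$ control on the parameterization $F(\cdot,t)$, through Schauder estimates for the graphical equation \eqref{graphical imcf} on compact subintervals of $[0,T^\ast)$. By Arzel\`a--Ascoli, any sequence $t_k\to T^\ast$ admits a subsequence along which $F(\cdot,t_k)$ converges in $C^{2,\alpha'}$ to a parameterization of a totally geodesic disk, by Theorem \ref{convergence theorem}. Uniqueness of the limit promotes subsequential convergence to full convergence, and since mean curvature is a continuous functional of the embedding in $C^2$, $H(\cdot,t)\to 0$ uniformly. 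Plugging into the estimate
\[
\int_{M_t} H^p\, d\mu \le \lambda\,\|H(\cdot,t)\|_{L^\infty}^{p}\longrightarrow 0
\quad\text{as } t\to T^\ast
\]
concludes the argument.

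The main obstacle is the $C^{2,\alpha}$ compactness up to $T^\ast$: as $M_t$ approaches the totally geodesic limit, $H$ may tend to $0$, so the factor $1/H^2$ appearing in the evolution \eqref{evolution log H} degenerates and uniform parabolicity of \eqref{graphical imcf} fails near $T^\ast$. If this obstructs direct Schauder estimates up to the endpoint, one can extract uniform decay by a contradiction argument: a failure would produce, along a subsequence, a nontrivial limiting second fundamental form on the totally geodesic disk, contradicting $|A|\equiv 0$ there. An alternative route bypasses $L^\infty$ decay by computing $\tfrac{d}{dt}\bigl[|M_t|^{(2-n)/n}\int_{M_t} H^2\bigr]$ directly from the evolutions in Section \ref{evolution and boundary derivatives}, exploiting the convexity inequality $|A|^2\ge H^2/n$ together with the boundary data \eqref{normal derivative H} to obtain a monotonicity that forces $\int H^2\to 0$; the claim for general $p\ge 1$ then follows from H\"older's inequality and the uniform $L^\infty$ bound on $H$.
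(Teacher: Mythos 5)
There is a genuine gap, and it sits exactly at the step you yourself flag as the ``main obstacle.'' Your argument hinges on the uniform decay $\|H(\cdot,t)\|_{L^\infty}\to 0$, which you want to extract from $C^{2,\alpha'}$ subconvergence of $F(\cdot,t)$ up to $T^\ast$. The only uniform estimate available on $[0,T^\ast)$ is the curvature bound \eqref{curvature bound}, which gives $C^{1,1}$ (i.e.\ $W^{2,\infty}$) control; upgrading this to uniform $C^{2,\alpha}$ bounds via Schauder theory for \eqref{graphical imcf} requires uniform parabolicity, i.e.\ a positive lower bound on $H$ up to $T^\ast$ --- precisely what fails, since $H$ must degenerate as the flow approaches the totally geodesic disk. (Accordingly, the convergence furnished by Theorem \ref{convergence theorem} is only $C^{1,\alpha}$ at $T^\ast$, not $C^2$.) Your two fallbacks do not close this. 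The contradiction argument presupposes that along a subsequence the second fundamental forms converge to ``the'' second fundamental form of the limit disk; from $W^{2,\infty}$ bounds one only gets weak-$\ast$ convergence of $A$, which is compatible with $\|H(\cdot,t_k)\|_{L^\infty}\not\to 0$, so no contradiction with $A_\infty\equiv 0$ arises. The monotonicity of $q(t)=|M_t|^{(2-n)/n}\int H^2+\Lambda|\partial M_t|$ only yields $\limsup_{t\to T^\ast}\int H^2\leqslant$ (initial data); a decreasing quantity does not force its limit to vanish, and indeed the paper needs the present lemma as an input to evaluate $q(T^\ast)$ in Theorem \ref{willmore for strictly convex}, so that route is close to circular.

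The paper's proof avoids all second-order compactness by exploiting a divergence structure: from \eqref{laplacian z}, $H(-\nu^1)=\Delta z^1-nz^1$, and Lemma \ref{estimate normal} gives $-\nu^1\geqslant c_0>0$, whence $c_0\int_{M_t}H\leqslant \int_{M_t}\Delta z^1-n\int_{M_t}z^1$. The Laplacian integrates to the boundary term $\coth\rho_0\int_{\partial M_t}z^1$ by \eqref{normal derivative z}, so the right-hand side is continuous under the $C^{1}$ convergence of $M_t$ and tends to the corresponding quantity on the totally geodesic disk, which is zero because $H\equiv 0$ there. This yields $\int_{M_t}H\to 0$, and the case of general $p$ follows from $\int H^p\leqslant \|H\|_{L^\infty}^{p-1}\int H$ together with \eqref{curvature bound} --- note that only $L^1$ decay of $H$, not $L^\infty$ decay, is needed. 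If you wish to salvage your compactness approach, the correct statement you can extract from $W^{2,\infty}$ bounds plus $C^{1,\beta}$ convergence is weak-$\ast$ convergence $H_{t_k}\,d\mu_{t_k}\rightharpoonup H_\infty\,d\mu_\infty=0$, which again gives exactly $\int H\to 0$ and no more; you would then still need the interpolation step rather than the claimed uniform convergence.
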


\begin{proof}
  Since
  \[ - H \langle D z^1, \nu \rangle = \Delta z^1 - n z^1, \]
  and Lemma \ref{estimate normal}, we have
  \begin{equation}
    0 \leqslant \int_{M_t} H (\cdot, t) \leqslant \tfrac{1}{c_0} \int_{M_t}
    \Delta z^1 - n \int_{M_t} z^1 \to 0
  \end{equation}
  as $t \to T^{\ast}$ where the convergence follows from the fact that $M_t$
  converges to a totally geodesic disk. The result then follows from the
  boundness of $H$ in \eqref{curvature bound} and interpolation.
\end{proof}

\begin{theorem}
  \label{willmore for strictly convex}We use the notations in Theorem
  \ref{willmore}. Any strictly convex free boundary hypersurface $M$ in $B_0$
  satisfies the Willmore inequality
  \[ |M|^{\tfrac{2 - n}{n}} \int_M (H^2 - n^2) + \Lambda | \partial M| > - n^2
     \lambda^{\tfrac{2}{n}} + \Lambda \omega_{n - 1} \sinh^{n - 1} \rho_0 . \]
\end{theorem}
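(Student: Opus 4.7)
The plan is to establish a Geroch-type monotonicity of
\[
Q(t) := |M_t|^{\tfrac{2-n}{n}}\int_{M_t}(H^2 - n^2)\,d\mu_t + \Lambda\,|\partial M_t|
\]
along the free boundary inverse mean curvature flow starting from $M = M_0$, and then conclude from $Q(0) \geq \lim_{t \to T^*} Q(t)$. By Theorem \ref{convergence theorem}, $M_t$ converges to a totally geodesic disk with $|M_{T^*}| = \lambda$ and $|\partial M_{T^*}| = \omega_{n-1}\sinh^{n-1}\rho_0$; combined with the preceding lemma on the vanishing of $\int_{M_t} H^p$ in the limit, this identifies $\lim_{t\to T^*} Q(t)$ with the claimed right-hand side.

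The main computation is the evolution of $\int_{M_t} H^2$. Using Lemma \ref{evolution H normal derivative H}, the fact that the volume form evolves as $\partial_t d\mu_t = d\mu_t$ under inverse mean curvature flow, integration by parts with the Neumann boundary condition $\nabla_\eta \log H = -\coth\rho_0$, and the pointwise Newton inequality $|A|^2 \geq H^2/n$ on the strictly convex $M_t$, I would obtain
\[
\tfrac{d}{dt}\int_{M_t} H^2 - n^2 |M_t| \leq -\int_{M_t}\tfrac{2|\nabla H|^2}{H^2}\,d\mu_t - 2\coth\rho_0\,|\partial M_t| + \tfrac{n-2}{n}\int_{M_t}(H^2 - n^2)\,d\mu_t.
\]
Multiplying by $|M_t|^{(2-n)/n}$ and combining with $\tfrac{d}{dt}|M_t|^{(2-n)/n} = \tfrac{2-n}{n}|M_t|^{(2-n)/n}$ produces a crucial cancellation of the $\int(H^2 - n^2)$ term, reducing $Q'(t)$ to
\[
Q'(t) \leq -2\coth\rho_0\,|M_t|^{\tfrac{2-n}{n}}|\partial M_t| + \Lambda\,\tfrac{d}{dt}|\partial M_t|.
\]

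For the boundary term, $\partial M_t$ moves inside $\partial B_0$ with normal velocity $\tfrac{1}{H}\nu$ by the free boundary condition, and a direct computation using \eqref{second fundamental form cross term} identifies the mean curvature of $\partial M_t$ in $\partial B_0$ as $H - A(\eta, \eta)$, giving
\[
\tfrac{d}{dt}|\partial M_t| = |\partial M_t| - \int_{\partial M_t} \tfrac{A(\eta,\eta)}{H}\,d\sigma \leq |\partial M_t|
\]
by convexity. Since the area grows exponentially and $|M_{T^*}| = \lambda$, we have $|M_t| \leq \lambda$, hence $|M_t|^{(2-n)/n} \geq \lambda^{(2-n)/n}$ for $n \geq 2$, and thus
\[
\Lambda\,\tfrac{d}{dt}|\partial M_t| - 2\coth\rho_0\,|M_t|^{\tfrac{2-n}{n}}|\partial M_t| \leq 2\coth\rho_0\,\bigl(\lambda^{\tfrac{2-n}{n}} - |M_t|^{\tfrac{2-n}{n}}\bigr)|\partial M_t| \leq 0.
\]
This gives $Q'(t) \leq 0$, and the strict inequality follows because strict convexity of $M_0$ rules out $M_0$ being already the limiting totally geodesic disk, so at least one of these non-positivity estimates is strict somewhere along the flow.

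The main obstacle is the design of the monotone quantity: balancing the factor $|M_t|^{(2-n)/n}$ (which scales precisely to cancel the $\tfrac{n-2}{n}\int(H^2 - n^2)$ term generated by the evolution of $\int H^2$) with the \emph{constant} $\Lambda = 2\coth\rho_0\,\lambda^{(2-n)/n}$ (rather than a time-dependent analog), which is possible only because $|M_t| \leq \lambda$. This subtle asymmetry is what makes the boundary contribution from integration by parts and from the evolution of $|\partial M_t|$ combine into a residual of the correct sign.
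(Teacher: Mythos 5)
Your proposal is correct and follows essentially the same route as the paper: both establish monotonicity of $q(t)=|M_t|^{\frac{2-n}{n}}\int_{M_t}(H^2-n^2)+\Lambda|\partial M_t|$ by combining the evolution of $\int_{M_t}(H^2-n^2)$ (integration by parts with $\nabla_\eta\log H=-\coth\rho_0$, plus $|A|^2\geq H^2/n$), the boundary area evolution controlled by convexity, and the bound $|M_t|<\lambda$, then evaluate the limit at $T^*$ using the convergence to a totally geodesic disk and the vanishing of $\int_{M_t}H^p$. The only differences are cosmetic (you make the cancellation of the $\int(H^2-n^2)$ term and the source of strictness slightly more explicit than the paper does).
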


\begin{proof}
  We start by calculating the change rate of $\int H^2 - n^2$, we have
\begin{align}
& \partial_t \int_M (H^2 - n^2) \mathrm{d} v \\
= & \int_M (H^2 - n^2) \mathrm{d} v + \int_M 2 H (- \Delta \tfrac{1}{H} -
(- n + |A|^2) \tfrac{1}{H}) \mathrm{d} v \\
= & \int_M (H^2 - n^2) \mathrm{d} v - 2 \int_M H \langle \nabla
\tfrac{1}{H}, \eta \rangle \mathrm{d} v + 2 \int_M \langle \nabla H,
\nabla \tfrac{1}{H} \rangle \mathrm{d} v \\
& - 2 \int_M (|A|^2 - n) \mathrm{d} v \\
\leqslant & \int_M (H^2 - n^2) \mathrm{d} v - 2 \coth \rho_0 | \partial M|
- 2 \int_M \left( \tfrac{H^2}{n} - n \right) \mathrm{d} v \\
= & \tfrac{n - 2}{n} \int_M (H^2 - n^2) \mathrm{d} v - 2 \coth \rho_0 |
\partial M| .
\end{align}
  Now
  \[ \partial_t | \partial M| = \frac{H_{\partial M, \partial B_0}}{H} |
     \partial M| < | \partial M|, \]
  where the inequality is due to convexity of $M$. The volume upper bound
  \[ |M| < \lambda = \omega_{n - 1} \int_0^{\rho_0} \sinh^{n - 1} \rho
     \mathrm{d} \rho . \]
  Let $q (t) = |M|^{\tfrac{2 - n}{n}} \int_M (H^2 - n^2) + \Lambda | \partial
  M|$, so
  \begin{equation}
    q' (t) < \Lambda | \partial M| - 2 \coth \rho_0 |M|^{\tfrac{2 - n}{n}} |
    \partial M| . \label{evolution q}
  \end{equation}
  Recall that $\Lambda = 2 \coth \rho_0 \lambda^{\tfrac{2 - n}{n}}$, with this
  $\Lambda$, we conclude that $q' (t) < 0$. So $q (t)$ is decreasing and
  \[ q (0) > q (T^{\ast}) . \]
  Now we calculate the number
  \[ q (T^{\ast}) = - n^2 |M|^{\tfrac{2}{n}} + \Lambda | \partial M| = - n^2
     \lambda^{\tfrac{2}{n}} + \omega_{n - 1} \Lambda \sinh^{n - 1} \rho_0 . \]
  This concludes our proof.
\end{proof}

We need the exact existence time for the inverse mean curvature flow.

\begin{lemma}
  Suppose that $M_0$ is strictly convex. Then the maximal existence time
  $T^{\ast}$ is
  \[ T^{\ast} = \log \left( \tfrac{\lambda}{|M_0 |} \right), \]
  where $\lambda$ is given in Theorem \ref{convergence theorem}.
\end{lemma}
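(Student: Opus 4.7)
The plan is to exploit the fact that under inverse mean curvature flow the area grows in the simplest possible way, and then use the convergence theorem already established to identify the area at time $T^{\ast}$.

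First, I would compute the evolution of $|M_t|$. Since $\partial_t F = \tfrac{1}{H}\nu$, the speed along the outward normal is $\tfrac{1}{H}$, so the volume form evolves by $\partial_t d\mu_t = H \cdot \tfrac{1}{H}\, d\mu_t = d\mu_t$. Integrating gives the elementary ODE $\tfrac{d}{dt}|M_t| = |M_t|$, hence
\[
|M_t| = |M_0|\, e^{t}
\]
for every $t \in [0, T^{\ast})$.

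Next, I would identify $\lim_{t\to T^{\ast}} |M_t|$. By the convergence result (Theorem \ref{convergence theorem}) together with the theorem stating that $\partial M_{T^{\ast}}$ is an equator and $M_{T^{\ast}}$ is a totally geodesic disk, the limit $M_{T^{\ast}}$ is an $n$-dimensional geodesic ball of radius $\rho_0$ sitting inside a totally geodesic copy of $\mathbb{H}^{n}$. Its induced metric is the standard hyperbolic metric on a geodesic $n$-ball of radius $\rho_0$, whose volume in geodesic polar coordinates equals
\[
\omega_{n-1}\int_0^{\rho_0}\sinh^{n-1} s\, ds = \lambda.
\]
The $C^{1,\alpha}$-convergence (as in the proof of the preceding convergence theorem, coming from the uniform curvature bounds) is enough to pass $|M_t|$ to the limit $|M_{T^{\ast}}| = \lambda$.

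Combining the two steps, letting $t \to T^{\ast}$ in $|M_t| = |M_0| e^{t}$ gives $\lambda = |M_0|\, e^{T^{\ast}}$, whence
\[
T^{\ast} = \log\!\left(\frac{\lambda}{|M_0|}\right).
\]
The only nontrivial input is the convergence to a totally geodesic disk whose boundary is a full equator; the area growth law itself is a one-line computation. No new estimates are needed.
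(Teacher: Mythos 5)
Your proof is essentially identical to the paper's: both derive $\frac{d}{dt}|M_t|=|M_t|$ (you from the normal speed acting on the volume form, the paper from $\partial_t g_{ij}=2H^{-1}h_{ij}$, which are equivalent), and both then use the $C^{1,\alpha}$ convergence to a totally geodesic disk to identify $|M_{T^{\ast}}|=\lambda$ and solve for $T^{\ast}$. No meaningful difference in the argument.
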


\begin{proof}
  Using that the evolution of $\tfrac{\partial}{\partial t} g_{i j} = 2 H^{-
  1} h_{i j}$ under inverse mean curvature flow, we see that
  \[ \tfrac{\mathrm{d}}{\mathrm{d} t} |M_t | = |M_t |, \]
  so
  \[ |M_t | = \mathrm{e}^t |M_0 | . \]
  Since we know from Theorem \ref{convergence theorem} that the flow
  convergences to a totally geodesic disk in $C^{1, \alpha}$, we know that
  \[ \lambda = \mathrm{e}^{T^{\ast}} |M_0 |, \]
  and the maximal existence time follows.
\end{proof}

We are using the mean curvature flow with free boundary to approximate a
weakly convex free boundary hypersurface.

\begin{lemma}
  \label{approximation of weak convex}Suppose that $F : \mathbb{D} \times [0,
  T) \to \mathbb{R}^{n + 1}$ is a solution to the mean curvature flow
  \begin{equation}
    \left\{\begin{array}{lll}
      \tfrac{\partial}{\partial t} F & = - H \nu & \text{ in } \mathbb{D}\\
      \langle \nu, \eta \rangle & = 0 & \text{ on } \partial \mathbb{D},
    \end{array}\right. \label{mcf}
  \end{equation}
  with initial hypersurface $M_0$ being weakly convex and perpendicular to the
  sphere from the inside. Then either $\partial M_0$ is an equator of the
  sphere or $h_{i j}$ is positive definite for $t > 0$.
\end{lemma}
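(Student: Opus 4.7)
The plan is to apply Hamilton's strong maximum principle for symmetric tensors to the second fundamental form under mean curvature flow in $\mathbb{H}^{n+1}$, using the free boundary derivative formula \eqref{normal derivative h ij} to handle the boundary. First I would record the evolution of the Weingarten tensor $h_i^j$ under MCF, which takes the form $\partial_t h_i^j = \Delta h_i^j + Q_i^j(h, g)$, where the reaction $Q$ coming from Simons' identity \eqref{simons} preserves the cone of nonnegative operators. At a boundary point where the smallest eigenvalue vanishes on a tangent direction $X \in T(\partial M)$, the cross-term $A(X, \eta) = 0$ from \eqref{second fundamental form cross term} combined with \eqref{normal derivative h ij} yields $(\nabla_\eta A)(X, X) = \coth \rho_0 \, A(\eta, \eta)$, which has the Hopf-compatible sign. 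By the standard tensor maximum principle this forces $h_{ij}(t) \ge 0$ for all $t \in [0, T)$.

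Next I would apply Hamilton's strong maximum principle for tensors to promote weak positivity to strict positivity. Suppose by contradiction that $h_{ij}$ acquires a nontrivial null vector at some $(p, t_0)$ with $t_0 > 0$. The kernel $\mathcal{N}_t := \ker h(\cdot, t)$ then becomes a smooth, $\nabla$-parallel distribution on $M_t$, invariant under the flow in a neighborhood of $t_0$; by backward propagation along the parabolic evolution, the same holds down to $t = 0$, producing a parallel null distribution $\mathcal{N}_0$ on $M_0$. Integral leaves of $\mathcal{N}_0$ are ambient geodesics lying inside $M_0$: being $\nabla$-parallel makes them totally geodesic in $M_0$, while being $A$-null removes any normal acceleration in $\mathbb{H}^{n+1}$. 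At a boundary point such a ruling $\gamma$ either is tangent to $\partial M_0$ or points along $\eta$; in the latter case $\gamma$ is a radial geodesic of $B_0$ and $M_0$ contains a totally geodesic equatorial disk, while in the former case $\partial M_0$ is itself swept out by totally geodesic arcs on $\partial B_0$ and hence lies in an equator. Either outcome forces $\partial M_0$ to be an equator of $\partial B_0$.

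The main obstacle I anticipate is the final geometric reconciliation between the parallel null distribution and the free boundary condition. Because $M_0$ is only assumed weakly convex, the null rulings can be tangent to $\partial M_0$ without an immediate contradiction, and showing that such tangency globally forces $\partial M_0$ into an equator requires a careful analysis of how $\mathcal{N}_0$ interacts with the second fundamental form of $\partial M_0$ inside $\partial B_0$, together with the sphere's intrinsic curvature. A subsidiary technical point is verifying that Hamilton's tensor principle can be propagated backward in time in the free boundary setting, for which the sign $(\nabla_\eta A)(X,X) = \coth\rho_0 \, A(\eta,\eta) \ge 0$ is precisely the ingredient needed.
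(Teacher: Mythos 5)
There is a genuine gap at the very first step of your plan, and it propagates through the rest. In hyperbolic space the reaction term does \emph{not} preserve the cone of nonnegative operators: combining $\partial_t h_{ij}=\nabla_i\nabla_jH-Hh_i^kh_{kj}-Hg_{ij}$ with the Simons identity \eqref{simons} gives
\begin{equation}
(\partial_t-\Delta)h_{ij}=nh_{ij}+|A|^2h_{ij}-2Hh_i^kh_{jk}-2Hg_{ij},
\end{equation}
and contracting the right-hand side with a null vector $v$ of $h$ yields $-2H|v|^2<0$ wherever $H>0$. The term $-2Hg_{ij}$, which comes from the ambient curvature $-1$ and is absent in the Euclidean case, violates the null-eigenvector condition. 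Consequently neither the weak tensor maximum principle (to preserve $h\geqslant 0$) nor Hamilton's strong maximum principle (to promote weak to strict positivity, or to extract a parallel kernel) applies to $h_{ij}$ directly as you propose; your boundary computation is in the right spirit but cannot repair the interior reaction term. The ``backward propagation'' of the kernel to $t=0$ is also not how the strong maximum principle works, though that is secondary.

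The paper's route is structured precisely to avoid this. It first uses the hypothesis that $\partial M_0$ is \emph{not} an equator to produce at least one strictly convex point on $M_0$ (the Lambert--Scheuer argument), seeds a nonnegative scalar $\phi_0$ that is positive there, and evolves it by the heat equation with Neumann boundary condition, so that $\phi>0$ everywhere for $t>0$ by the \emph{scalar} strong maximum principle. It then applies Stahl's weak tensor maximum principle with boundary conditions to the comparison tensor $M_{ij}=h_{ij}-\phi g_{ij}$, whose evolution acquires the extra term $2\phi Hh_{ij}$ and whose null eigenvectors are eigenvectors of $h_{ij}$ with eigenvalue $\phi$, changing the sign analysis at the null directions. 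No strong maximum principle for tensors, no parallel-kernel splitting, and no rigidity analysis of geodesic rulings meeting $\partial B_0$ is required. Your final step --- that a parallel null distribution on $M_0$ forces $\partial M_0$ to be an equator --- is exactly the part you flag as the main obstacle and is not carried out; in the paper the logical direction is reversed and much weaker (non-equator implies one strictly convex point), which is what allows the argument to close.
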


\begin{proof}
  Under the mean curvature flow,
  \[ \partial_t h_{i j} = \nabla_i \nabla_j H - H h_i^k h_{j k} - H g_{i j},
  \]
  Using Simons identity \eqref{simons} for $\Delta h_{i j}$, we have the
  evolution of the second fundamental form,
  \[ (\partial_t - \Delta) h_{i j} = n h_{i j} + |A|^2 h_{i j} - 2 H h_i^k
     h_{j k} - 2 H g_{i j} . \]
  If $\partial M_0$ is not an equator, then there exists a strictly convex
  point due to the same reasoning as {\cite[Lemma
  3.1]{lambert-geometric-2017}}. Let
  \[ \chi (\xi, t) = \min_{|v| = 1} h_{i j} v^i v^j . \]
  Since $h_{i j}$ is smooth, $\chi$ is Lipschitz in space and by a simple
  cut-off argument, we can find a smooth function $\phi_0 : M \to \mathbb{R}$
  so that $0 \leqslant \phi_0 \leqslant \chi (\xi, 0)$ and there exists $\xi'
  \in M$ so that $\phi_0 (\xi') > 0$. We extend $\phi_0$ to $\phi :
  \mathbb{D}^n \times [0, \tau) \to \mathbb{R}$ by a heat flow,
  \begin{equation}
    \left\{\begin{array}{rll}
      \tfrac{\partial}{\partial t} \phi - \Delta_{g (t)} \phi & = 0 & \text{
      in } \ensuremath{\operatorname{int}} (\mathbb{D}) \times [0, \tau)\\
      \nabla_{\eta} \phi & = 0 & \text{ on } \partial \mathbb{D} \times [0,
      \tau)\\
      \phi (\cdot, 0) & = \phi_0, & 
    \end{array}\right.
  \end{equation}
  where $\Delta$ is the Laplace-Beltrami operator of the metrics $g (t)$
  induced by the solution $F$ of \eqref{mcf}. This is only a linear parabolic
  PDE so we can find a short time solution on $[0, \tau)$ for some small $\tau
  > 0$ by standard theory. By the strong maximum principle,
  \[ \phi (\xi, t) > 0 \text{ for } \xi \in \mathbb{D}, t > 0. \]
  Let $M_{i j} = h_{i j} - \phi g_{i j}$, then
  \begin{equation}
    (\partial_t - \Delta) M_{i j} = n h_{i j} + |A|^2 h_{i j} - 2 H h_i^k h_{j
    k} - 2 H g_{i j} + 2 \phi H h_{i j} =: N_{i j} . \label{evolution M ij}
  \end{equation}
  Let $v$ be a unit vector such that
  \[ M_{i j} v^j = h_{i j} v^j - \phi g_{i j} v^j = 0. \]
  Then tensor $N_{i j}$ applies to $v \otimes v$ is
\begin{align}
& N_{i j} v^i v^j \\
= & n \phi + |A|^2 \phi - 2 H \phi^2 - 2 H \phi + 2 H \phi^2 \\
= & n \phi + |A|^2 \phi - 2 H \phi \geqslant 0,
\end{align}
  where the inequality $n + |A|^2 \geqslant 2 H$ is simply proved by
  diagonalizing the matrix $h_{i j}$. We have verified that evolution of $M_{i
  j}$ satisfies the null eigenvector condition.
  
  Now we wish to apply Stahl
  {\cite{stahl-convergence-1996,stahl-regularity-1996}} to conclude that $M_{i
  j} \geqslant 0$ on $(0, \delta)$. Because $\phi > 0$ for $t > 0$, $h_{i j} >
  0$ for $0 < t < \tau$ and we finish our proof by applying {\cite[Proposition
  4.5]{stahl-convergence-1996}} to the flow $F (x, t - \tfrac{\tau}{2})$. The
  rest of the proof is devoted to the assertion that $M_{i j} \geqslant 0$.
  
  We use Stahl's notation for comparability, for $p \in \partial B_0$, write
  $\mu \in T_p M$ for the outward pointing normal to $\partial B_0$ and some
  basis tangent vectors $\partial_I$ of $\partial M$ so that $(\mu,
  \partial_I)$ with $2 \leqslant I \leqslant n$ induces some coordinates near
  $p$. Now we show that the conditions in {\cite[Lemma
  3.4]{stahl-regularity-1996}} hold.
  
  Observe that Lemma \ref{lemma normal derivative h ij} holds for any
  hypersurface perpendicular to the geodesic sphere $\partial B_0$, this
  implies that
  \begin{equation}
    \nabla_{\mu} M_{i j} = \coth \rho_0 (h_{\mu \mu} \delta_{I J} - h_{I J}) .
    \label{normal derivative M ij}
  \end{equation}
  Replacing the inverse mean curvature flow of Lemma \ref{lemma normal
  derivative h 11} by mean curvature flow, we have that $\nabla_{\mu} h_{\mu
  \mu} = \coth \rho_0 (2 H - n h_{\mu \mu})$. See also {\cite[Theorem
  4.3]{stahl-convergence-1996}}. So
  \[ \nabla_{\mu} M_{\mu \mu} = \coth \rho_0 (2 H - n h_{\mu \mu}) . \]
  Suppose first that $V \in T_p \partial M$ is a minimal eigenvector with
  eigenvalue $\lambda \in (- \delta, 0]$, that is
  \[ M_{i j} V^i = \lambda g_{i j} V^i . \]
  We see also that $V$ is also a minimal eigenvector of $h_{i j}$, and
  therefore $h_{i j} V^i V^j \leqslant h_{\mu \mu}$. So \eqref{normal
  derivative M ij} implies that
  \[ \nabla_{\mu} M_{I J} V^I V^J \geqslant 0. \]
  Now suppose that $\mu$ is a minimal eigenvector with eigenvalue $\lambda \in
  (- \delta, 0]$. Again minimality of $\mu$ implies that for all $W \in T_p
  \partial M$, $h_{i j} W^i W^j \geqslant h_{\mu \mu}$, in particular, $H
  \geqslant n h_{\mu \mu}$ and so
  \begin{equation}
    \nabla_{\mu} M_{\mu \mu} \geqslant H \geqslant 0.
  \end{equation}
  Now we apply the tensor maximum principle (Theorem 3.3 and Lemma 3.4 of
  {\cite{stahl-regularity-1996}}) to conclude that $M_{i j} \geqslant 0$.
\end{proof}

\begin{corollary}
  \label{approximation by strictly convex hypersurface}Suppose that $M$ is
  weakly convex free boundary hypersurface and $\partial M$ is not an equator.
  Then there exists an $\varepsilon > 0$ such for $0 < t < \varepsilon$ there
  are smooth and strictly convex free boundary hypersurfaces that satisfy
  \begin{equation}
    \int_{M_t} H^2 \to \int_M H^2, |M_t | \to |M|, | \partial M_t | \to |
    \partial M|
  \end{equation}
  as $t \to 0$.
\end{corollary}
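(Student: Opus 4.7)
The plan is to take $M_t$ to be the solution of mean curvature flow with Neumann boundary condition \eqref{mcf} starting from $M$, as produced in Lemma \ref{approximation of weak convex}. Since $M$ is weakly convex but $\partial M$ is not an equator, that lemma gives strict positivity of $h_{ij}$ along $M_t$ on some interval $(0,\varepsilon)$, and smoothness for $t > 0$ follows from standard short-time regularity. Thus the only remaining task is to verify that the three geometric quantities converge as $t \to 0^+$.

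The key ingredient is continuous dependence of the mean curvature flow on its initial data up to the initial time. For a smooth free boundary hypersurface $M$ perpendicular to $\partial B_0$, the short-time existence theory of Stahl \cite{stahl-convergence-1996,stahl-regularity-1996} produces a unique solution $F$ of \eqref{mcf} which lies in $C^{1+\alpha/2,\,2+\alpha}([0,\varepsilon) \times \mathbb{D}) \cap C^\infty((0,\varepsilon) \times \mathbb{D})$. Consequently, on any fixed parametrization, the induced metric $g(t)$, the second fundamental form $h_{ij}(t)$, and in particular the mean curvature $H(t)$, all converge uniformly on $\mathbb{D}$ to their values at $t = 0$ as $t \to 0^+$.

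With this regularity in hand, the three convergence statements are routine. The area convergences $|M_t| \to |M|$ and $|\partial M_t| \to |\partial M|$ follow from the $C^0$ convergence of $g(t)$ and of its restriction to $\partial \mathbb{D}$, via continuity of the Riemannian volume and boundary volume functionals. The Willmore energy convergence
\begin{equation}
\int_{M_t} H^2\,dv_{g(t)} \longrightarrow \int_M H^2\,dv_g
\end{equation}
follows from the uniform convergence $H(t) \to H(0)$ together with the uniform convergence of the volume element $dv_{g(t)} \to dv_g$, which justifies passing to the limit inside the integral.

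The main obstacle is ensuring that the flow has enough regularity up to $t = 0$, since the Willmore integrand is of second order. This is precisely the content of Stahl's parabolic regularity: given a smooth initial hypersurface meeting $\partial B_0$ orthogonally, the solution extends continuously in $C^2$ down to $t=0$. Once this is invoked, strict convexity from Lemma \ref{approximation of weak convex} combined with the convergence statements above furnishes the required approximating family $\{M_t\}_{0 < t < \varepsilon}$.
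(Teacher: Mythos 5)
Your proposal is correct and follows essentially the same route as the paper, whose proof consists of citing Stahl's regularity theorem for the Neumann mean curvature flow together with regularity at the initial time $t=0$; you have simply spelled out the details of why $C^{1+\alpha/2,2+\alpha}$ regularity up to $t=0$ yields uniform convergence of $H$, $g(t)$, and the volume elements, and hence of the three integral quantities.
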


\begin{proof}
  It follows from {\cite[Theorem 2.1]{stahl-regularity-1996}} and regularity
  at initial time $t = 0$ of \eqref{mcf}.
\end{proof}

We now prove Theorem \ref{willmore}.

\begin{proof}[Proof of Theorem \ref{willmore}]
  If $\partial M$ is an equator, then $M$ has to be totally geodesic by Lemma
  \ref{mean curvature rigidity} and hence the Willmore inequality
  \eqref{willmore for weak convex} is valid for $M$ with equality.
  
  Due to Theorem \ref{willmore for strictly convex} and \ref{approximation of
  weak convex}, the Willmore inequality \eqref{willmore for weak convex} holds
  for weakly convex hypersurface $M$ with non-equatorial $\partial M$.
  
  We characterize the equality case of \eqref{willmore for weak convex}. If
  $\partial M$ is an equator, by previous argument, $M$ has to be totally
  geodesic. It follows the same lines from {\cite[Lemma
  3.4]{lambert-geometric-2017}} that
  \begin{equation}
    |M| \leqslant |C_M | < \lambda \label{volume estimate}
  \end{equation}
  for some constant $C_M > 0$ if we assume $\partial M$ is not an equator.
  
  Due to Lemma \ref{approximation of weak convex}, for every $\varepsilon > 0$
  there exists a strictly convex hypersurface perpendicular to the sphere
  $\partial B_0$ from the inside $M^{\varepsilon}$ such that
  \[ q (M^{\varepsilon}) \leqslant q (M) + \varepsilon . \]
  We know that the maximal existence time
  \[ T_{\varepsilon}^{\ast} = \log (\tfrac{\lambda}{|M^{\varepsilon} |}) . \]
  By \eqref{evolution q} , the quantity $q^{\varepsilon} (t)$ satisfy$q' (t) <
  \Lambda | \partial M| - 2 \coth \rho_0 |M|^{\tfrac{2 - n}{n}} | \partial M|
  .$
\begin{align}
\tfrac{\mathrm{d}}{\mathrm{d} t} q^{\varepsilon} (t) < & | \partial
M_t^{\varepsilon} | (\Lambda - 2 \coth \rho_0 |M^{\varepsilon}_t
|^{\tfrac{2 - n}{n}}) \\
= & 2 \coth \rho_0 \lambda^{\tfrac{2 - n}{n}} | \partial M_t^{\varepsilon}
| \left( 1 - \mathrm{e}^{\tfrac{n - 2}{n} (T^{\ast}_{\varepsilon} - t)}
\right) .
\end{align}
  Due to \eqref{volume estimate} and Corollary \ref{approximation by strictly
  convex hypersurface}, there exists a positive time $T$ which only depends on
  $|M|$ and is independent of $\varepsilon$ such that
  \begin{equation}
    T_{\varepsilon}^{\ast} \geqslant 2 T > 0.
  \end{equation}
  Hence for all $\varepsilon$ and all $0 \leqslant t \leqslant T$ there holds
  \begin{equation}
    \tfrac{\mathrm{d}}{\mathrm{d} t} q^{\varepsilon} (t) \leqslant c (1 -
    \mathrm{e}^{\tfrac{n - 2}{n} T}) =: - c,
  \end{equation}
  where $c$ only depends on $n$, $|M|$ and $| \partial M|$. Using the strict
  convexity of $M_{\varepsilon}$ and Lemma \ref{willmore for strictly convex}
  , we obtain that
\begin{align}
q (P) < q^{\varepsilon} (T) = & q (M^{\varepsilon}) + \int_0^T
\tfrac{\mathrm{d}}{\mathrm{d} t} q^{\varepsilon} (s) \mathrm{d} s
\\
\leqslant & q (M) + \varepsilon - c T \\
= & q (P) + \varepsilon - c T.
\end{align}
  Choosing a sufficiently small $\varepsilon$, we obtain a contradiction and
  complete the proof.
  
  \ 
\end{proof}

\

\

\

\

\

\

\

\end{document}